\documentclass[12pt,a4paper,leqno]{article}

\usepackage[utf8]{inputenc}
\usepackage{amsmath}
\usepackage{amsfonts}
\usepackage{amsthm}
\usepackage{calrsfs}
\usepackage{amssymb}

\usepackage{dsfont}
\usepackage{mathtools}
\usepackage{color}

\usepackage{geometry}
\geometry{hmargin=1.5cm,vmargin=3cm}
\newtheorem{theorem}{Théorème}

\newtheorem{rmq}{\textbf{Remark} }

\newtheorem{prop}[theorem]{\textbf{Proposition}}

\newenvironment{dem}{\textbf{Proof}}{\qed \\ }

\newtheorem{corol}[theorem]{\textbf{Corollary}}

\newtheorem{lemm}[theorem]{\textbf{Lemma}}
\newtheorem{thm}[theorem]{\textbf{Theorem}}

\newtheorem*{thm*}{\textbf{Theorem}}
\newtheorem{assump}{\textbf{Assumption}}

\newtheorem*{fact}{\textbf{Fact}}


\usepackage{filecontents,lipsum}
\begin{filecontents*}{mystyle.sty}
\NeedsTeXFormat{LaTeX2e}
\ProvidesPackage{mystyle}

\endinput
\end{filecontents*}
\usepackage{mystyle}




\newcommand{\R}{\mathbb{R}}
\newcommand{\N}{\mathbb{N}}


\title{Stability of higher order eigenvalues in dimension one}
\author{Jordan Serres\footnote{Institut de Mathématiques de Toulouse, jordan.serres@math.univ-toulouse.fr}}

\begin{document}

\maketitle

\begin{center}
\textbf{Abstract}
\end{center}
\begin{quotation}
We study stability of the eigenvalues of the generator of a one dimensional reversible diffusion process satisfying some natural conditions. The proof is based on Stein's method. In particular, these results are applied to the Normal distribution (via the Ornstein-Uhlenbeck process), to Gamma distributions (via the Laguerre process) and to Beta distributions (via Jacobi process).
\end{quotation}

\section{Introduction}

A classical question in Spectral Geometry is to identify properties of a manifold from the knowledge of eigenvalues of a canonical differential operator.
The most extensively studied case is when the differential operator is the Laplace-Beltrami operator of a Riemannian manifold.
This problem has been formulated by the famous "Can one hear the shape of a drum?" by M.Kac \cite{drum}. We refer the reader to \cite{nodrum1,nodrum2,Milnordrum} and to the survey \cite{surveydrum}.
The Hille-Yosida theory gives that under certain natural conditions, a differential operator generates a contractive semigroup (see \cite{Yosida}). In particular, in the case of the Laplace-Beltrami operator, it is the heat semigroup. In this range of ideas, Kato's formula implies comparison of semigroups and hence comparison of eigenvalues (see \cite{bessonkatoineq, katogeneralization, SIMON}).
A large part of the literature is also devoted to estimates of the growth of eigenvalues of Schrodinger operators. These include the works of M.Bordoni \cite{bordonischrodingeroperator}, A.Laptev \cite{laptev} and E.Lieb and W.Thirring \cite{liebschoperator, LiebThirring}.

There are many classical comparison results involving only the first or second eigenvalues of operators. Let us cite among them the celebrated Faber-Krahn inequality: balls uniquely minimize the first Dirichlet eigenvalue of the Lapacian in $\R^d$ among sets with given volum \cite{faber1923, krahn1925}, and the Hong-Krahn-Szego inequality: disjoint pair of equal balls uniquely minimize the second Dirichlet eigenvalue among sets with given volum \cite{ krahn1926, hong, polyiaszego}.

In terms of functional inequalities, the first eigenvalue is encoded by the Poincaré constant. A probability measure $\mu$ on $\mathbb{R}^d$ is said to satisfy a Poincaré inequality when for all functions $f$ in the Sobolev space $H^{1}(\mu)$,
\begin{equation}
\mathrm{Var}_\mu (f) \leq C_P(\mu) \int |\nabla f|^2 d\mu,  
\end{equation}
where $C_P(\mu)$ denotes the smallest constant for which the above inequality holds.
Poincaré inequalities have many applications (see for instance the survey \cite{surveyineg}). When $\mu$ is reversible for a Markov process, the infinitesimal generator $L$ of the Markov process is symmetric on $L^2(\mu)$ and the quantity $\lambda_1(\mu) := \frac{1}{C_P(\mu)}$ is then the spectral gap of the positive symmetric operator $-L$ (see \cite[section 4.2.1]{BGL}).

Stability results for Poincaré constant began to appear in the late 80's. Chen \cite[Corollary 2.1]{1987} showed that all isotropic probability measures on $\mathbb{R}^d$ have sharp Poincaré constant greater than $1$. He proved furthermore that the standard Gaussian is the only one attaining $1$. Then Utev \cite{U} refined this result in dimension one, quantifying the difference between Poincaré constants in term of total variation distance: $$C_P(\nu)\geq 1 + \frac{1}{9}d_{TV}(\nu,\gamma)^2 $$ where $\nu$ is a normalized probability measure on $\mathbb{R}$, $\gamma$ is the standard Gaussian and $d_{TV}$ is the total variation distance.
More recently, Courtade, Fathi and Pananjady \cite{existsteinkernel}, extended it to the multidimensional case with the Wasserstein-$2$ distance:
\begin{equation}\label{stabconnu}
C_P(\nu)\geq 1 + \frac{W_2(\nu,\gamma)^2}{d} 
\end{equation}
where $\nu$ is a centered probability measure on $\mathbb{R}^d$, normalized such that $\int|x|^2\,d\nu=d$, $\gamma$ denotes the Gaussian $\mathcal{N}(0,I_d)$ and $W_2$ is the $2$-Wasserstein distance (see \cite[chapter 6]{villani}). 
This result has been extended in a more abstract setting, for a general reference probability measure $\mu$ on a manifold instead of the Gaussian on $\mathbb{R}^d$.
\begin{thm} \cite[Theorem 16]{js}
Let $L$ be a Markov reversible generator with respect to a probability measure $\mu$, carré du champ operator $\Gamma$, and with spectral gap $C_P(\mu)^{-1}$ and associated eigenfunction $f_1$. If any other measure $\nu$ satisfies the normalization conditions $$\int f_1\,d\nu=0,\quad \int f_1^2\,d\nu=1,\quad \int\Gamma(f_1)\,d\nu\leq \frac{1}{C_P(\mu)},$$ and the Poincaré inequality $$\forall f,\quad \mathrm{Var}_\nu(f)\leq C_P(\nu)\int\Gamma(f)\,d\nu, $$ then $C_P(\nu)\geq C_P(\mu)$ and moreover the closeness between $C_P(\nu)$ and $C_P(\mu)$ bounds the $1$-Wasserstein distance between the laws of the pushforwards of $\mu$ and $\nu$ by $f_0$:
\begin{equation}\label{stabilityresultonedimension}
W_1\left(f_0^\#(\mu)\,,\,f_0^\#(\nu)\right)\leq Const\left(\frac{1}{C_P(\mu)}\sqrt{C_P(\nu)-C_P(\mu)}+ \frac{\sqrt{C_P(\nu)}}{C_P(\mu)^2}\left(C_P(\nu)-C_P(\mu)\right) \right).
\end{equation}
The constant is finite when the generalized gradient $\Gamma(f_1)$ of the eigenfunction satisfies some  growth conditions (see Proposition $\ref{gammasteinbound}$).
\end{thm}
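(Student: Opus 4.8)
The plan is to prove the two assertions separately: the comparison $C_P(\nu)\ge C_P(\mu)$ is elementary, and all the real work goes into the Wasserstein bound, via Stein's method.

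For the comparison, I would simply insert the eigenfunction $f_1$ (written $f_0$ in the displayed bound) into the Poincaré inequality for $\nu$. The two normalizations give $\Var_\nu(f_1)=\int f_1^2\,d\nu-(\int f_1\,d\nu)^2=1$, hence $1\le C_P(\nu)\int\Gamma(f_1)\,d\nu\le C_P(\nu)/C_P(\mu)$ by the third assumption, so $C_P(\nu)\ge C_P(\mu)$. Read more carefully, this also controls the \emph{energy defect} $\delta:=C_P(\mu)^{-1}-\int\Gamma(f_1)\,d\nu$: it is nonnegative, and since $\int\Gamma(f_1)\,d\nu\ge\Var_\nu(f_1)/C_P(\nu)=C_P(\nu)^{-1}$ we get $0\le\delta\le C_P(\mu)^{-1}-C_P(\nu)^{-1}=\frac{C_P(\nu)-C_P(\mu)}{C_P(\mu)C_P(\nu)}$. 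Everything below is then estimated in terms of $\delta$.

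Next I set up the Stein structure. Write $\lambda_1=C_P(\mu)^{-1}$. Since $L$ is a diffusion and $Lf_1=-\lambda_1 f_1$, the chain rule gives $L(g\circ f_1)=g''(f_1)\Gamma(f_1)-\lambda_1 f_1 g'(f_1)$; integrating against $\mu$ shows that $m:=f_0^\#(\mu)$ is the invariant measure of the one-dimensional diffusion with generator $\mathcal{L}g(y)=\tau(y)g''(y)-\lambda_1 y g'(y)$, where $\tau(y)=\E_\mu[\Gamma(f_1)\mid f_1=y]$. This is where dimension one is essential: the first eigenfunction is monotone, so $\tau\circ f_1=\Gamma(f_1)$ as functions, and hence the pointwise identity $L(g\circ f_1)=(\mathcal{L}g)\circ f_1$ may also be integrated against $\nu$. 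Under the growth conditions on $\Gamma(f_1)$ (Proposition \ref{gammasteinbound}), for every $1$-Lipschitz $h$ the Stein equation $\mathcal{L}g_h=h-\int h\,dm$ has a solution with uniformly bounded Stein factors (bounds on $\|g_h'\|_\infty$ and $\|\tau g_h''\|_\infty$, scaling like powers of $\lambda_1$); finiteness of these factors is precisely what makes the stated constant finite. By Kantorovich--Rubinstein duality, $W_1(f_0^\#\mu,f_0^\#\nu)=\sup_h\big(\int h(f_1)\,d\nu-\int h\,dm\big)$ over $1$-Lipschitz $h$, and the Stein equation together with the chain rule give
\[
\int h(f_1)\,d\nu-\int h\,dm=\int\mathcal{L}g_h(f_1)\,d\nu=\int\big(\Gamma(f_1)g_h''(f_1)-\lambda_1 f_1 g_h'(f_1)\big)\,d\nu .
\]
Introducing the generalized Stein kernel $\tau_\nu$ of $f_0^\#\nu$ relative to $\mathcal{L}$ --- the function with $\int\tau_\nu(f_1)\phi''(f_1)\,d\nu=\lambda_1\int f_1\phi'(f_1)\,d\nu$ for all $\phi$ --- the right-hand side equals $\int\big(\Gamma(f_1)-\tau_\nu(f_1)\big)g_h''(f_1)\,d\nu$, so $W_1(f_0^\#\mu,f_0^\#\nu)\lesssim\E_{f_0^\#\nu}|\tau-\tau_\nu|\le\big(\E_{f_0^\#\nu}(\tau-\tau_\nu)^2\big)^{1/2}$, the implicit constant being a Stein factor.

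The remaining --- and, I expect, hardest --- step is to bound $\E_{f_0^\#\nu}(\tau-\tau_\nu)^2$ by a constant times $\delta$, plus a higher-order term. The means differ by exactly $\delta$: $\int\tau_\nu\,d(f_0^\#\nu)=\lambda_1\int f_1^2\,d\nu=\lambda_1$ whereas $\int\tau\,d(f_0^\#\nu)=\int\Gamma(f_1)\,d\nu=\lambda_1-\delta$, and one must show the full $L^2$-distance is of the same order. I would do this by testing the Poincaré inequality for $\nu$ against $\phi'\circ f_1$ with $\phi''=\tau_\nu$ (so $\phi$ is a second antiderivative of the Stein kernel): $\Var_\nu(\phi'(f_1))\le C_P(\nu)\int(\phi''(f_1))^2\Gamma(f_1)\,d\nu$, which, combined with the defining relation of $\tau_\nu$ and with $\int\tau_\nu(f_1)^2\,d\nu=\lambda_1\,\mathrm{cov}_\nu(f_1,\phi'(f_1))$ and Cauchy--Schwarz, becomes a self-improving bound on $\int\tau_\nu(f_1)^2\,d\nu$. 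This is the one-dimensional analogue, with variable diffusion coefficient $\tau$ instead of $1$, of the estimate $\int\tau_\nu^2\,d\nu\le C_P(\nu)$ used for the Gaussian Stein kernel by Courtade--Fathi--Pananjady, and together with the two mean identities it should give $\E_{f_0^\#\nu}(\tau-\tau_\nu)^2\lesssim\delta+\sqrt{C_P(\nu)}\,\delta$, the extra $\sqrt{C_P(\nu)}$ coming from the Poincaré inequality combined with Cauchy--Schwarz and the bookkeeping of the weight $\Gamma(f_1)$. Inserting $\delta\le\frac{C_P(\nu)-C_P(\mu)}{C_P(\mu)C_P(\nu)}$ and the powers of $\lambda_1=C_P(\mu)^{-1}$ carried by the Stein factors then yields the two-term bound \eqref{stabilityresultonedimension}. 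The genuinely delicate point is this last estimate: $\tau_\nu$ is only accessible through its pairing with second derivatives, so extracting a quantitative $L^2$-bound on $\tau-\tau_\nu$ needs both the monotonicity of $f_1$ (so that $\tau$ along $f_1$ is literally $\Gamma(f_1)$, which lets the discrepancy be written purely in terms of $\nu$) and a careful, non-Lipschitz choice of test function in Poincaré.
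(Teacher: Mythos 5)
Your opening step (evaluating $\nu$'s Poincaré inequality at $f_1$ to get $C_P(\nu)\geq C_P(\mu)$ and to control the energy defect $\delta$) and your outer Stein layer (the pushforward generator $\mathcal{L}g=\tau g''-\lambda_1 yg'$ with $\tau\circ f_1=\Gamma(f_1)$ by monotonicity, Stein factors finite under the growth conditions) are exactly as in the paper. The genuine gap is at the step you yourself flag as hardest: the bound $\E_{f_0^{\#}\nu}(\tau-\tau_\nu)^2\lesssim\delta+\sqrt{C_P(\nu)}\,\delta$ is never derived (``should give''), and in the unweighted form you state it the argument cannot close in the generality of the theorem. Two concrete reasons. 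First, the only Stein factor available (Proposition~\ref{gammasteinbound}) controls the \emph{weighted} quantity $\|\sqrt{h}\,\psi'\|_\infty=\|\sqrt{\tau}\,g_h''\|_\infty$ (write $\psi=g_h'$, so the second-order Stein equation reduces to the first-order one); no uniform bound on $\|g_h''\|_\infty$ alone is provided, nor expected, since $h$ degenerates at the endpoints of the image interval. Second, your CFP-style self-improvement does not transfer verbatim to a non-constant diffusion coefficient: testing the defining relation of $\tau_\nu$ against an antiderivative of $\tau_\nu$ and applying the (pushforward) Poincaré inequality reintroduces the weight, giving control of $\int\tau_\nu(f_1)^2\Gamma(f_1)\,d\nu$-type quantities rather than $\int\tau_\nu(f_1)^2d\nu$; since $\Gamma(f_1)$ is unbounded in the very examples covered by the theorem (e.g.\ the Laguerre case, $\Gamma(f_1)\propto x$), the unweighted $L^2$ estimate on $\tau-\tau_\nu$ does not follow, and the cross term $\int\tau\,\tau_\nu\,d\nu^*$ is not controlled by the normalization (only the two means are). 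In addition, existence and admissibility of $\tau_\nu$ (defined only through its pairing with $\phi''\circ f_1$) is asserted, not established; it requires a Riesz-representation argument in the weighted Sobolev space of the pushforward, using that $\nu$'s Poincaré inequality applied to compositions $\phi\circ f_1$ yields a Poincaré inequality for $f_0^{\#}\nu$ with Dirichlet form $\int\tau\,\phi'^2\,d(f_0^{\#}\nu)$.

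For comparison, the paper's proof (in \cite{js}, reproduced here for general $k$ as Theorem~\ref{ippapprox} and Corollary~\ref{meilleureeqippapproxavecf2}) introduces no Stein kernel at all: it plugs $f_1+tg$ into $\nu$'s Poincaré inequality, the nonpositivity of the resulting quadratic in $t$ (discriminant $\leq 0$) gives directly the approximate integration by parts $\left|\int(\lambda_1(\nu)f_1g-\Gamma(f_1,g))\,d\nu\right|\leq\sqrt{\lambda_1(\mu)-\lambda_1(\nu)}\sqrt{\int\Gamma(g)\,d\nu}$, the swap $\lambda_1(\nu)\to\lambda_1(\mu)$ costs the second term of the stated bound, and the solution $\psi$ of the \emph{first-order} Stein equation composed with $f_1$ is then the test function $g$, Proposition~\ref{gammasteinbound} supplying the constant. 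Your route can be repaired, and then it is genuinely different: construct $\tau_\nu=\tau u'$ by Riesz representation, bound the weighted discrepancy $\int(\tau-\tau_\nu)^2\tau^{-1}\,d(f_0^{\#}\nu)$ (the Dirichlet energy of $u-\mathrm{id}$) by $(C_P(\nu)-C_P(\mu))/C_P(\mu)^2-\delta$ using the admissible test function $\mathrm{id}$, $\int\tau\,d(f_0^{\#}\nu)=\lambda_1-\delta$, and the pushforward Poincaré inequality, and pair it by Cauchy--Schwarz with $\|\sqrt{\tau}g_h''\|_\infty\leq C_h$. That repaired version avoids the discriminant trick and even the $\lambda_1(\nu)\to\lambda_1(\mu)$ swap, at the price of the extra functional-analytic construction of $\tau_\nu$, which the paper's direct argument bypasses entirely. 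As submitted, however, the central estimate is a gap, not a proof.
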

In this paper we will consider a diffusion process $L$ on an interval with reversible probability measure $\mu$ and carré du champ operator $\Gamma$. The reason why we obtain results only in dimension one will appear clear in Section $\ref{stabilityresultpart}$. However, the entire framework and results outlined up to section $\ref{stabilityresultpart}$ remain valid in higher dimensions. We will derive a stability result for higher order eigenvalues of the generator $L$.
In \cite{js}, we used the following min-max theorem as definition of the first eigenvalue: 
\begin{equation}\label{minmaxfirsteigenvalue}
\lambda_1(\nu)=\underset{f\in H^1(\nu)}{\inf}\frac{\int \Gamma(f)\,d\nu}{\int f^2\,d\nu}. 
\end{equation}
There are other min-max theorems for higher order eigenvalues, that require to change the functional space over which the infimum in Formula ($\ref{minmaxfirsteigenvalue}$) runs. This can be seen as improving the Poincaré constant by decreasing the domain of the inequality. Despite the changes, we shall see that the main ingredients used for the stability of the first eigenvalue can still by used to establish stability results for higher order eigenvalues.

Let $k\in\N$, $k\geq 1$ and $0<\lambda_1(\mu)<\lambda_2(\mu)<...<\lambda_k(\mu)<... $ be the sequence of eigenvalues of $-L$, counted without multiplicity, and let $f_k$ be a normalized eigenfunction associated with $\lambda_k(\mu)$, i.e. $$ -Lf_k=\lambda_k\,f_k, \quad \int f_k\,d\mu=0,\quad \mathrm{and}\quad \int f_k^2d\mu=1.$$ We set $I_k:=\mathrm{Im}(f_k)$, $a_k:=\inf I_k$ and $b_k:=\sup I_k$.
Let $\nu$ be another probability measure on $M$, normalized so that 
$$
\int f_k\,d\nu=0,\quad \int f_k^2\,d\nu=1,\quad \int\Gamma(f_k)\,d\nu\leq \lambda_k(\mu),
$$
and satisfying the following improved Poincaré inequality
$$
 \int f^2\,d\nu \leq \frac{1}{\lambda_k(\nu)}\int\Gamma(f)\,d\nu\quad \forall f\in H^1(\nu)\cap \left(Sp_1(\nu)\oplus...\oplus Sp_{k-1}(\nu) \right)^{\perp},
$$ where $Sp_i(\nu)$ denotes the $i$-th eigenspace of $\nu$, $\lambda_k(\nu)$ denotes the $k$-th eigenvalue of $\nu$, and the orthogonal complement is to be understood in the $L^2(\nu)$ sense. We will show (see Lemma $\ref{lemmcomparhigh}$) that under these normalization conditions, $\nu$ satisfies 
$$ \lambda_k(\nu)\leq \lambda_k(\mu)+ \sum_{i=1}^{k-1} \left( \lambda_k(\nu)-\lambda_i(\nu)\right)d(f_k,Sp_i(\nu))^\perp)^2,$$ where $d(f_k,Sp_i(\nu))^\perp)^2$ denotes the squared distance between $f_k$ and $Sp_i(\nu)^\perp$. We refine this by proving the following stability result for the $k$-th eigenvalue.
\begin{thm}\label{thmintrohigh}
For all one dimensional probability measures $\nu$ normalized as in $(\ref{normalisation})$, satisfying the improved Poincaré inequalities $(\ref{poicareimproved})$, and the technical Assumption $\ref{hjcommeilfaut}$, it holds for some finite constant $C>0$:

$$ \sum_j \nu_j(I_k^j)\, W_1(\nu_j^*,\mu_j^*) \leq C \left[\sqrt{\left|\lambda_k(\mu)-\lambda_k(\nu)\right|} +\frac{ \left|\lambda_k(\mu)-\lambda_k(\nu)\right|}{\sqrt{\lambda_1(\nu)}} + \sum_{i=1}^{k-1}C_i\, d(f_k,Sp_i(\nu)^\perp)\right]  $$
where $(I_k^j)_j$ are the images by $f_k$ of the connected components of the complementary of its critical points, $\nu_j^*$ (resp. $\mu_j^*$) is the pushforward of $\nu$ (resp. $\mu$) restricted to $I_k^j$, constants $C_i$ are given by $$C_i=\sqrt{\lambda_k(\nu)-\lambda_i(\nu)}+\frac{\lambda_k(\nu)-\lambda_i(\nu)}{\sqrt{\lambda_i(\nu)}},$$ and $d(f_k,Sp_i(\nu)^\perp)$ is defined in Remark $\ref{orthoerror}$. The value $C=\sum_j C_{h_j}^2$ suffices, with $C_{h_j}$ given in Proposition $\ref{gammasteinbound}$.
\end{thm}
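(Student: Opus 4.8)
The plan is to run Stein's method branch by branch along the monotonicity intervals of $f_k$, reducing the statement to a single estimate that quantifies how far $f_k$ is from being a genuine eigenfunction of $\nu$ --- exactly as in the first eigenvalue case of \cite{js}, with the improved Poincaré inequality $(\ref{poicareimproved})$ replacing the variational characterisation $(\ref{minmaxfirsteigenvalue})$, and the critical points of $f_k$ forcing a piecewise decomposition.

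\textbf{Step 1: reduction to monotone branches and pushforward diffusions.} Split $M$ along the finitely many critical points of $f_k$ into the open intervals $M_j$ on which $f_k\colon M_j\to I_k^j$ is a strictly monotone bijection, and set $\nu_j=\nu|_{M_j}$, $\mu_j=\mu|_{M_j}$, so that $\nu_j^*=f_k^{\#}\nu_j$ and $\mu_j^*=f_k^{\#}\mu_j$ become, after renormalisation, probability measures on $I_k^j$. By the dual formulation of $W_1$ it suffices to bound $\nu_j(I_k^j)\big(\int\phi\,d\nu_j^*-\int\phi\,d\mu_j^*\big)$ for every $1$-Lipschitz $\phi$ on $I_k^j$. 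Assumption $\ref{hjcommeilfaut}$ furnishes a function $h_j$ on $I_k^j$, vanishing at the endpoints which are critical values of $f_k$, with $\Gamma(f_k)|_{M_j}=h_j\circ f_k$; together with the chain rule $L(\psi\circ f_k)=\psi'(f_k)\,Lf_k+\psi''(f_k)\,\Gamma(f_k)$ and $-Lf_k=\lambda_k(\mu)f_k$, this gives $L(\psi\circ f_k)=(\bar L_j\psi)\circ f_k$ on $M_j$, where $\bar L_j\psi:=h_j\psi''-\lambda_k(\mu)\,\mathrm{id}\,\psi'$ is a one-dimensional diffusion generator on $I_k^j$ for which the identity is an eigenfunction with eigenvalue $\lambda_k(\mu)$ and for which $\mu_j^*$ is reversible (the boundary terms at critical values vanish with $h_j$, and the process is natural at the ends of $M$ by the standing assumptions on $(L,\mu)$).

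\textbf{Step 2: Stein's method on each branch and the defect identity.} For a given $1$-Lipschitz $\phi$, solve the Stein equation $\bar L_j g_j=\phi-\int\phi\,d\mu_j^*$; by Proposition $\ref{gammasteinbound}$ the solution satisfies $\|g_j'\|_\infty,\|g_j''\|_\infty\leq C_{h_j}$. Using reversibility of $\mu_j^*$ under $\bar L_j$ and the chain rule,
$$\int\phi\,d\nu_j^*-\int\phi\,d\mu_j^*=\int_{I_k^j}\bar L_j g_j\,d\nu_j^*=\int_{M_j}L(g_j\circ f_k)\,d\nu .$$
Since on the interval $\nu$ is reversible for the diffusion $L_\nu$ sharing the carré du champ $\Gamma$, and $\Gamma\big(g_j'(f_k),f_k\big)=g_j''(f_k)\,\Gamma(f_k)$ with boundary contributions again killed at critical values, integration by parts gives
$$\int_{M_j}L(g_j\circ f_k)\,d\nu=-\int_{M_j}g_j'(f_k)\,\big(\lambda_k(\mu)f_k+L_\nu f_k\big)\,d\nu .$$
Summing over $j$ with the weights $\nu_j(I_k^j)$ and using $\|g_j'\|_\infty\leq C_{h_j}$, the left-hand side of the theorem is at most $\big(\sum_j C_{h_j}^2\big)\,\big\|\lambda_k(\mu)f_k+L_\nu f_k\big\|_{L^1(\nu)}$, which both identifies the constant $C=\sum_j C_{h_j}^2$ and leaves only the ``defect'' $\big\|\lambda_k(\mu)f_k+L_\nu f_k\big\|_{L^1(\nu)}$ to estimate.

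\textbf{Step 3: controlling the defect.} Decompose $f_k=\sum_{i=1}^{k-1}\pi_i f_k+\tilde f_k$ in the $L^2(\nu)$ spectral decomposition of $-L_\nu$, with $\pi_i$ the projection onto $Sp_i(\nu)$ and $\tilde f_k\in\big(Sp_1(\nu)\oplus\cdots\oplus Sp_{k-1}(\nu)\big)^\perp$, and recall $\|\pi_i f_k\|_{L^2(\nu)}=d(f_k,Sp_i(\nu)^\perp)$ (Remark $\ref{orthoerror}$). Split $\lambda_k(\mu)f_k+L_\nu f_k$ accordingly. The components along $Sp_i(\nu)$, $i<k$, contribute terms of size $(\lambda_k(\mu)-\lambda_i(\nu))\,d(f_k,Sp_i(\nu)^\perp)$; feeding in the bound $\lambda_k(\nu)\leq\lambda_k(\mu)+\sum_i(\lambda_k(\nu)-\lambda_i(\nu))d(f_k,Sp_i(\nu)^\perp)^2$ of Lemma $\ref{lemmcomparhigh}$ and a further elementary Stein estimate for each $\pi_i f_k$ (which carries a factor $\lambda_i(\nu)^{-1/2}$), these reassemble into $\sum_{i<k}C_i\,d(f_k,Sp_i(\nu)^\perp)$ with $C_i$ as stated. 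The component along $Sp_k(\nu)$ is bounded by $|\lambda_k(\mu)-\lambda_k(\nu)|\,\|f_k\|_{L^1(\nu)}\leq|\lambda_k(\mu)-\lambda_k(\nu)|$, which, paired with the $\lambda_1(\nu)^{-1/2}$ part of the Stein factor, gives the middle term. The main obstacle is the high-frequency part $\sum_{\lambda_m(\nu)>\lambda_k(\nu)}\alpha_m e_m$: one must show that $\int\Gamma(f_k)\,d\nu\leq\lambda_k(\mu)$ together with the improved Poincaré inequality $(\ref{poicareimproved})$ applied to $\tilde f_k$ (giving $\int\Gamma(\tilde f_k)\,d\nu\geq\lambda_k(\nu)\int\tilde f_k^2\,d\nu$) forces $\sum_{\lambda_m(\nu)>\lambda_k(\nu)}(\lambda_m(\nu)-\lambda_k(\nu))\alpha_m^2$ to be at most $|\lambda_k(\mu)-\lambda_k(\nu)|$ plus a $\sum_{i<k}\lambda_i(\nu)\,d(f_k,Sp_i(\nu)^\perp)^2$ remainder; then, using that Assumption $\ref{hjcommeilfaut}$ guarantees $L_\nu f_k\in L^2(\nu)$, a Cauchy--Schwarz/interpolation argument upgrades this to a bound of order $\sqrt{|\lambda_k(\mu)-\lambda_k(\nu)|}$ on the corresponding piece of $\|\lambda_k(\mu)f_k+L_\nu f_k\|_{L^1(\nu)}$ --- the delicate point being that this spectral mass is not merely small but spread out, so it enters the $L^1$ norm to the power $1/2$ rather than to the power $1$. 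Collecting the three contributions against the Stein factors $C_{h_j}$ yields the asserted inequality with $C=\sum_j C_{h_j}^2$.
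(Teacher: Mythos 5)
There is a genuine gap, and it is structural rather than cosmetic. Your Steps 2--3 presuppose that $\nu$ is reversible for a diffusion generator $L_\nu$ sharing the carr\'e du champ $\Gamma$, that $f_k$ admits a spectral decomposition in $L^2(\nu)$ along eigenfunctions of $-L_\nu$, and that the pointwise defect $\lambda_k(\mu)f_k+L_\nu f_k$ makes sense and lies in $L^1(\nu)$ (even $L^2(\nu)$). None of this is granted by the hypotheses of Theorem \ref{thmintrohigh}: $\nu$ is only assumed to satisfy the normalization $(\ref{normalisation})$ and the improved Poincar\'e inequality $(\ref{poicareimproved})$, with $Sp_i(\nu)$ defined in the weak variational sense of Section \ref{sectioneigenspacenu}; and Assumption \ref{hjcommeilfaut}, which you invoke to get $L_\nu f_k\in L^2(\nu)$, concerns only the vanishing/growth rates of the functions $h_j$ (i.e.\ $\Gamma(f_k)$ written along the branches of $f_k$, a property of $\mu$ and $f_k$), not $\nu$. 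Even granting the extra structure, the key estimate of your Step 3 does not go through: the hypotheses only yield an energy (quadratic-form) bound, namely $\int\Gamma(p_k^\perp)\,d\nu-\lambda_k(\nu)\int (p_k^\perp)^2\,d\nu$ controlled by $|\lambda_k(\mu)-\lambda_k(\nu)|$ plus the orthogonality errors, i.e.\ $\sum_{m}(\lambda_m(\nu)-\lambda_k(\nu))\alpha_m^2$ small in your notation. This does not control $\bigl\|\lambda_k(\mu)f_k+L_\nu f_k\bigr\|_{L^1(\nu)}$, which involves $\sum_m(\lambda_m-\lambda_k)|\alpha_m|$-type (or, after Cauchy--Schwarz, $\sum_m(\lambda_m-\lambda_k)^2\alpha_m^2$-type) quantities: arbitrarily high eigenvalues can carry negligible energy $(\lambda_m-\lambda_k)\alpha_m^2$ and still make the defect norm blow up. The ``Cauchy--Schwarz/interpolation argument'' you gesture at for the high-frequency part is exactly the step that fails, and you acknowledge rather than close it. A secondary misuse: Proposition \ref{gammasteinbound} bounds $\|\sqrt{h_j}\psi_j'\|_\infty$ for the solution of the \emph{first-order} Stein equation $h_j\psi'-\lambda_k(\mu)t\,\psi=g_j-\mu_j^*(g_j)$; it does not give $\|g_j'\|_\infty,\|g_j''\|_\infty\leq C_{h_j}$ for a second-order equation $\bar L_jg_j=\phi-\mu_j^*(\phi)$.

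The paper's route avoids forming any pointwise defect. It works entirely in a weak (dual $H^1(\nu)$) formulation: plugging $\alpha=p_k^\perp+tg$ into $(\ref{poicareimproved})$ and using non-positivity of the discriminant of the resulting quadratic in $t$ gives Theorem \ref{ippapprox}, an approximate integration-by-parts inequality with error measured against $\sqrt{\int\Gamma(g)\,d\nu}$; Corollary \ref{meilleureeqippapproxavecf2} and the Cauchy--Schwarz/Poincar\'e step then yield $(\ref{steinpret})$, valid for every $g\in H^1(\nu)$ with no reference to a generator of $\nu$. Only then does one split along $\mathrm{Crit}(f_k)$, push forward on each branch, and evaluate $(\ref{steinpret})$ at $g=\sum_j\mathbf{1}_{J_j}\,\psi_j\circ f_k$ with $\psi_j$ the solution $(\ref{steinsol})$ of the first-order Stein equation; the bound $\|\sqrt{h_j}\psi_j'\|_\infty\leq C_{h_j}$ is precisely what controls $\int\Gamma(g)\,d\nu\leq\sum_jC_{h_j}^2$, and the $W_1$ distances appear through the dual formulation. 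If you want to salvage your outline, you must replace the defect $\|\lambda_k(\mu)f_k+L_\nu f_k\|_{L^1(\nu)}$ by the bilinear quantity $\bigl|\int(\lambda_k(\mu)f_kg-\Gamma(f_k,g))\,d\nu\bigr|$ tested against the Stein solutions, i.e.\ essentially reproduce the discriminant argument.
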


The technical Assumption $\ref{hjcommeilfaut}$ (see Section $\ref{steindecoupe}$) ensures the finiteness of the constant $C$ (see Proposition $\ref{resumechfini}$), and asks that the carré du champ operator $\Gamma(f_k)$ of the eigenfunction satisfies a certain polynomial growth condition. The proof of Theorem $\ref{thmintrohigh}$ is based on an approximate integration by parts formula satisfied by $\nu$ with respect to the $k$-th eigenfunction $f_k$ (see Corollary $\ref{meilleureeqippapproxavecf2}$) and the use of Stein's method to the pushforward of $\mu$ by $f_k$ (see Section $\ref{implementsteinhigh}$). Let us mention that the exact integration by parts satisfied by the $k$-th Hermite polynomial in case of the Normal distribution, was used in \cite{higherordersteinkernel} to define the notion of higher order Stein's kernels in the context of Gaussian approximation.

Our result applies in particular to the normal distribution (see Section $\ref{highergauss}$), Gamma distributions on $\mathbb{R}_+$ (see Section $\ref{higherlaguerre}$), and Beta distributions on $[-1,1]$ (see Section $\ref{highjacobi}$). Applying this to the second Hermite polynomial, we obtain the following Chi-2 approximation result.
For all measure $\nu$ on $\mathbb{R}$ normalized as $$\int x^2d\nu=1,\quad \mathrm{and}\quad \int x^4\,d\nu = 3, $$ it holds for some finite positive constant $C>0$:
$$W_1\left(\frac{1}{\sqrt{2}}\left(\chi_2 -1\right),\nu^*\right) \leq C \left[\sqrt{\left|2-\lambda_2(\nu)\right|} +\frac{\left|2-\lambda_2(\nu)\right|}{\sqrt{\lambda_1(\nu)}} + C_\nu\, d\left(\frac{1}{\sqrt{2}}\left(x^2-1\right),Sp_1(\nu)^\perp\right) \right]$$
where $\chi_2$ is the $\chi_2$-distribution on $\mathbb{R}_+$, $\nu^*$ is the pushforward of $\nu$ by the second Hermite polynomial $\frac{1}{\sqrt{2}}\left(x^2-1\right)$, the constant $C_\nu$ is given by $C_\nu=\sqrt{\lambda_2(\nu)-\lambda_1(\nu)} +\frac{\lambda_2(\nu)-\lambda_1(\nu)}{\sqrt{\lambda_1(\nu)}}$, and $d\left(\frac{1}{\sqrt{2}}\left(x^2-1\right),Sp_1(\nu)^\perp\right)$ quantifies the orthogonality error between $\frac{1}{\sqrt{2}}\left(x^2-1\right)$ and the first eigenspace of $\nu$ (see Section $\ref{sectioneigenspacenu}$).\\

Let us say a few words about stability results from a geometric setting. The Lichnerowicz theorem asserts that among all Riemannian manifolds with Ricci curvature bounded by below by $N-1$, unit spheres of dimension $N$ uniquely minimizes the first eigenvalue of the Laplace-Beltrami operator \cite{Lichnerowicz}. The Bakry-Emery criterion \cite{bakryEmery, bakry1989} extends this result to Gaussian spaces: if $\mu=e^{-V}dx$ is a probability distribution which is more log-concave than the Gaussian (i.e. $\mathrm{Hess}\,V \geq I_d $) then its Poincaré constant is smaller than $1$ which is that of the Gaussian.

While the original proof of Bakry-Emery is based on the semigroup method, another powerful method is the contraction principle \cite{Milman2018}: if $\mu$ is the pushforward of $\nu$ by a $L$-Lipshitz map, then $C_P(\mu)\leq L \,C_P(\nu)$. In particular, Caffarelli's contraction theorem \cite{caffarelli1} states that the optimal transport between the Gaussian and a more log-concave distribution given by the Brenier map is $1$-Lipschitz, recovering the Bakry-Emery criterion. E.Milman \cite{Milman2018} pointed out that the contraction principle does not only entail a comparison between the first eigenvalues, but a comparison between the entire spectra. In that area of sprectral comparison by the contraction principle, let us cite the recent works of D.Mikulincer and Y.Shenfeld \cite{mikulincershefeld1, mikulincershefeld2}.

The question of stability of spectral estimates has been addressed in various works. We refer the reader to the survey \cite{brasco2016spectral} by L.Brasco and G. De Philippis for a view of quantitative sharp inequalities for first (and second) eigenvalues of the Laplacian in $\R^d$. Let us mention in particular the quantitative form of Faber-Krahn inequality. L.Brasco, G. De Philippis and B.Velichkov proved in \cite{MR3357184} that there exists a constant $\sigma>0$ depending on the dimension such that for all $\Omega\subset\mathbb{R}^d$ of volum $1$, 
\begin{equation*}
\lambda_1(\Omega)\geq \lambda_1(B)+ \sigma \mathcal{A}(\Omega)^2
\end{equation*}
where $\lambda_1(\Omega)$ denotes the first Dirichlet eigenvalue of the Laplacian on $\Omega$, $B$ denotes the unit ball in $\mathbb{R}^d$, $\mathcal{A}(\Omega)$ is the Fraenkel asymetry of $\Omega$, and the exponent $2$ is sharp.

The study of the stability of the spectral gap of a diffusion operator falls within this framework. Under the curvature-dimension condition, let us cite the work \cite{BF21} of J.Bertrand and M.Fathi which treats the case of the positive curvature and the infinite dimension. In particular, they show that any $RCD(1,\infty)$ space reaching almost the Bakry-Emery bound $1$ for its spectral gap, admits approximately all integers in its spectrum. The stability is quantified in terms of a spectral comparison with the Gaussian, since the integers are eigenvalues of the Ornstein-Uhlenbeck generator for which the Gaussian is reversible. In case of the positive curvature and the finite dimension, the Lichnerowicz theorem has been extended in the following way.
\begin{thm} \cite[Theorem 1.1]{rcdstability}
Let $(M, d , \mu)$ be an RCD$(N-1,N)$ space with $N > 1$ and spectral gap $\lambda_1 \leq N + \varepsilon$ for some $\varepsilon > 0$, with $f$ an eigenfunction of the Laplacian, with eigenvalue $\lambda_1$ and normalized so that $||\Gamma(f)||_1 = N/(N+1)$. There is a constant $C(N) > 0$ (independent of $M$) such that the $1$-Wasserstein distance between the pushforward of $\mu$ by $f$ and a symmetrized Beta distribution with parameters $(N/2, N/2)$ is smaller than $C(N)\varepsilon$. 
\end{thm}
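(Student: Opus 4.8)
\medskip
\noindent\textbf{A proof strategy.} The symmetrized Beta law with parameters $(N/2,N/2)$ --- write $\beta_N$ --- is the distribution of the first coordinate of a uniform point on the round sphere $S^N$: it has density proportional to $(1-x^2)^{(N-2)/2}$ on $[-1,1]$ and is the reversible measure of the Jacobi diffusion $\cL_N\phi=(1-x^2)\phi''-Nx\phi'$, whose first nonzero eigenvalue equals $N$, with eigenfunction $x$. This is precisely the rigid profile of Lichnerowicz--Obata, which forces $\beta_N$ to appear. The plan is to show that the pushforward $\bar\mu:=f_\#\mu$ almost satisfies the integration-by-parts identity characterising $\beta_N$, with an error controlled by $\varepsilon$, and then to run quantitative Stein's method for $\beta_N$, exactly the mechanism used in Section~\ref{implementsteinhigh} for the main theorem of this paper. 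The starting point is the integrated Bakry--\'Emery inequality coming from $\mathrm{CD}(N-1,N)$ applied to $f$: since $\int\Gamma_2(f)\,d\mu=\int(\cL f)^2\,d\mu=\lambda_1^2\int f^2\,d\mu$, $\int\Gamma(f)\,d\mu=\lambda_1\int f^2\,d\mu$, and $\int\Gamma(f)\,d\mu=N/(N+1)$ by normalisation, the deficit is \emph{exactly}
\[
D:=\int\Bigl[\Gamma_2(f)-(N-1)\Gamma(f)-\tfrac1N(\cL f)^2\Bigr]d\mu=\frac{(N-1)\,\lambda_1(\lambda_1-N)}{N}\int f^2\,d\mu=\frac{N-1}{N+1}\,\varepsilon,
\]
linear in $\varepsilon$; and the dimensional (improved) Bochner inequality on $\mathrm{RCD}(N-1,N)$ spaces upgrades this to the Hessian estimate $\int\bigl\|\Hess f+\tfrac{\lambda_1}{N}f\,\id\bigr\|^2\,d\mu\le C_N\varepsilon$ (for $\varepsilon$ small, using also that the essential dimension is then $O(\varepsilon)$-close to $N$).

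\emph{Approximate conservation law.} Put $u:=\Gamma(f)+\tfrac{\lambda_1}{N}f^2$, so that $\nabla u=2\bigl(\Hess f+\tfrac{\lambda_1}{N}f\,\id\bigr)(\nabla f,\cdot)$ and $\int u\,d\mu=\int\Gamma(f)\,d\mu+\tfrac{\lambda_1}{N}\int f^2\,d\mu=\tfrac{N}{N+1}+\tfrac{1}{N+1}=1$ exactly. Using the a priori bounds $\|f\|_\infty,\|\Gamma(f)\|_\infty\le C_N$ (Moser iteration and the Bochner gradient estimate on $\mathrm{RCD}(N-1,N)$ spaces) and the Hessian estimate above, $\int|\nabla u|^2\,d\mu\le C_N\varepsilon$; the spectral gap $\lambda_1\ge N$ then gives $\|u-1\|_{L^2(\mu)}\le\lambda_1^{-1/2}\|\nabla u\|_{L^2(\mu)}\le C_N\sqrt\varepsilon$, hence $\|\Gamma(f)-(1-f^2)\|_{L^1(\mu)}\le C_N\sqrt\varepsilon$ (recall $\Gamma(f)-(1-f^2)=(u-1)-\tfrac{\varepsilon}{N}f^2$). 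Separately, the chain rule $\cL(\phi\circ f)=\phi'(f)\,\cL f+\phi''(f)\Gamma(f)$ and $\int\cL(\phi\circ f)\,d\mu=0$ give the \emph{exact} identity $\int\phi''(f)\Gamma(f)\,d\mu=\lambda_1\int f\,\phi'(f)\,d\mu$ for every $\phi\in C^2$; since $\Gamma(f)\approx 1-f^2$ and $\lambda_1=N+\varepsilon$, this says that $\bar\mu$ approximately satisfies the integration-by-parts formula characterising $\beta_N$.

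\emph{Stein's method for $\beta_N$.} For a $1$-Lipschitz test function $h$, let $\phi_h$ solve the Stein equation $\cL_N\phi_h=h-\int h\,d\beta_N$. The Jacobi Stein factors satisfy $\|\phi_h'\|_\infty\le C_N$ and $\|(1-x^2)\phi_h''\|_\infty\le C_N$ (in the spirit of Section~\ref{highjacobi}), the weight $1-x^2$ cancelling the boundary blow-up of $\phi_h''$ because $(1-x^2)\phi_h''=(h-\int h\,d\beta_N)+Nx\phi_h'$. Using the exact identity above,
\[
\int h\,d\bar\mu-\int h\,d\beta_N=\int\bigl[(1-f^2)\phi_h''(f)-Nf\,\phi_h'(f)\bigr]d\mu=(\lambda_1-N)\!\int\! f\,\phi_h'(f)\,d\mu+\!\int\!\bigl[(1-f^2)-\Gamma(f)\bigr]\phi_h''(f)\,d\mu,
\]
where the first term is at most $C_N\varepsilon$ and the second at most $\|(1-x^2)\phi_h''\|_\infty\int\frac{|(1-f^2)-\Gamma(f)|}{1-f^2}\,d\mu$. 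Taking the supremum over $h$,
\[
W_1(\bar\mu,\beta_N)\ \le\ C_N\Bigl(\varepsilon+\int\frac{|(1-f^2)-\Gamma(f)|}{1-f^2}\,d\mu\Bigr).
\]

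\emph{Main obstacle.} Everything reduces to bounding the weighted error $\int\frac{|(1-f^2)-\Gamma(f)|}{1-f^2}\,d\mu$ by $C(N)\varepsilon$, and this is the heart of the matter, for two reasons. First, the denominator: one needs control of $\Gamma(f)-(1-f^2)$ near the extrema of $f$, where $1-f^2$ vanishes; the conservation law forces $\mathrm{Im}(f)$ close to $[-\sqrt{N/\lambda_1},\sqrt{N/\lambda_1}]$ but only up to an $O(\varepsilon)$ error, so the quotient can a priori blow up near $\partial$, and one must add a genuine modulus-of-continuity estimate for $f$ and $\Gamma(f)$ near $\max f$ --- coming from Laplacian comparison and the structure theory of $\mathrm{RCD}(N-1,N)$ spaces --- to keep it integrable. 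Second, and more seriously, the rate: the Bochner deficit, though linear in $\varepsilon$, controls only the \emph{square} $\|u-1\|_{L^2}^2$, so the conservation-law step naively yields only $\sqrt\varepsilon$. Removing the square root to reach the asserted $C(N)\varepsilon$ requires a self-improvement: pass to the level-set average $G(x):=\E_\mu[\Gamma(f)\mid f=x]$, use the exact identity $\int\phi''(f)\Gamma(f)\,d\mu=\lambda_1\int f\,\phi'(f)\,d\mu$ to show that the oscillation of $\Gamma(f)$ along level sets is itself controlled quadratically by the Hessian deficit, and iterate the approximate integration-by-parts. I expect this bootstrap, together with its interplay with the boundary weight, to be the principal difficulty.
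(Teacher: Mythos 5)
This theorem is not proved in the present paper at all: it is quoted verbatim from \cite{rcdstability} as background in the introduction, so there is no in-paper argument to measure you against; your sketch must stand on its own, and it does not yet. The overall architecture you propose — derive an approximate integration-by-parts identity for $\bar\mu=f_\#\mu$ against the Jacobi generator of the symmetrized Beta$(N/2,N/2)$ law, then run Stein's method with Jacobi Stein factors — is indeed the right spirit and is consistent with the Stein-method mechanism of Section~\ref{implementsteinhigh}. But the statement to be proved is a bound \emph{linear} in $\varepsilon$, and precisely at that point your argument stops: as you yourself concede, the route through the Bochner deficit controls only $\|u-1\|_{L^2(\mu)}^2$ by $C_N\varepsilon$, hence yields $\|\Gamma(f)-(1-f^2)\|_{L^1(\mu)}\lesssim\sqrt\varepsilon$, and the proposed ``self-improvement'' via the level-set average $G(x)=\E_\mu[\Gamma(f)\mid f=x]$ is only a conjecture, with no lemma showing that the oscillation of $\Gamma(f)$ on level sets is quadratically small or that the iteration closes. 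The known proofs do not obtain the linear rate this way: they exploit a \emph{pointwise one-sided} gradient estimate of Bakry--Qian type for the first eigenfunction (roughly $\Gamma(f)\le\frac{\lambda_1}{N}(c^2-f^2)$ with $c=\|f\|_\infty$, together with control of $c$), so that the error $\Gamma(f)-\frac{\lambda_1}{N}(c^2-f^2)$ has a sign and its $L^1$ norm is controlled by its \emph{integral}, which the exact identities pin down at order $\varepsilon$. Without such a signed pointwise estimate, no amount of Cauchy--Schwarz on the Hessian deficit will remove the square root.

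A second, related gap is the boundary weight. Your Stein bound produces the error term $\int\frac{|(1-f^2)-\Gamma(f)|}{1-f^2}\,d\mu$, and you acknowledge that near the extrema of $f$ the denominator degenerates while the conservation law only locates $\mathrm{Im}(f)$ up to $O(\varepsilon)$; invoking ``Laplacian comparison and structure theory'' is not an estimate. The correct treatment either (i) works with Stein factors weighted so that only quantities like $\|\sqrt{1-x^2}\,\phi_h'\|_\infty$ or $\|(1-x^2)\phi_h''\|_\infty$ multiply an \emph{unweighted} $L^1$ error (which again requires the signed pointwise gradient bound to be useful at order $\varepsilon$), or (ii) renormalizes $f$ by $c=\|f\|_\infty$ so that the comparison profile is supported exactly on $\mathrm{Im}(f)$. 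There are also smaller unproven ingredients in the RCD setting (the Hessian-deficit estimate $\int\|\Hess f+\frac{\lambda_1}{N}f\,\id\|^2\,d\mu\le C_N\varepsilon$ needs the improved Bochner inequality and care with the discrepancy between the essential dimension $n\le N$ and $N$ in the trace term; $\|f\|_\infty\le C_N$ must be justified with constants depending only on $N$). As it stands, the proposal is a reasonable reduction plus an honest list of the hard steps, but the decisive quantitative step — the passage from the spectral deficit to an $O(\varepsilon)$, boundary-compatible control of $\Gamma(f)-(1-f^2)$ — is missing, so it does not constitute a proof of the cited theorem.
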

Let us emphasize that the stability is quantified in terms of $W_1$ distance between pushforward by the first eigenfunction, since the symmetrized Beta distribution with parameters $(N/2, N/2)$ is the distribution of the pushforward by a first eigenfunction of the reversible law of the Laplacian on a sphere. 
Let us conclude this introduction by mentioning that under the normalisation approach used in this paper, the spectral gap of the model space is maximal, whereas under the curvature condition approach, it is minimal.

\section{The space of normalized probability distributions $\nu$}
In this section, we explicitly describe the space of normalized probability distributions on which our stability result holds. We consider a probability measure $\nu$ such that 
\begin{equation}\label{normalisation}
\int f_k\,d\nu=0,\quad \int f_k^2\,d\nu=1,\quad\mathrm{and}\quad \int\Gamma(f_k)\,d\nu\leq \lambda_k(\mu) 
\end{equation} 
Let us underline that these normalization conditions correspond to 
$$\int f_k\,d\nu=\int f_k\,d\mu,\quad \int f_k^2\,d\nu=\int f_k^2\,d\mu,\quad \int\Gamma(f_k)\,d\nu\leq \int\Gamma(f_k)\,d\mu. $$ But since $f_k$ is an eigenfunction associated to the $k$-th eigenvalue of $\mu$, we have $\int f_k\,d\mu=0$ and $\int\Gamma(f_k)\,d\mu = \lambda_k(\mu)\int f_k^2d\mu$, hence we normalize $f_k$ by $\int f_k^2=1$ in order to make the conditions more readable.
 
\subsection{Eigenspaces of $\nu$}\label{sectioneigenspacenu}
We define the eigenspaces of $\nu$ in the following way: first  $$Sp_1(\nu):=\left\{f\in H^1(\nu)\,|\,\forall g\in H^1(\nu),\, \int fg\,d\nu = \frac{1}{\lambda_1(\nu)}\int\Gamma(f,g)\,d\nu\right\},$$ 
where $$H^1(\nu):=\left\{f\in L^2(\nu)\,|\,\int f\,d\nu=0,\,\int\Gamma(f)\,d\nu<\infty\right\}, $$ and 
\begin{equation}\label{minmaxfirst}
\lambda_1(\nu)=\underset{f\in H^1(\nu)\setminus \{0\}}{\inf}\frac{\int \Gamma(f)\,d\nu}{\int f^2\,d\nu}. 
\end{equation} 
This definition corresponds to eigenspace in a weak sense.
It is clearly a linear space and a subset of $\{f\in H^1(\nu)\,|\,\int f^2\,d\nu = \frac{1}{\lambda_1(\nu)}\int\Gamma(f)\,d\nu \}$. Moreover, if $\nu$ is reversible for some generator $L_\nu$ with carré du champ operator $\Gamma$, then the converse set inclusion holds and $Sp_1(\nu)$ is an eigenspace of $L_\nu$ in the classical sense.
We can then recursively define higher order eigenspaces in a similar way.
\begin{align*}
Sp_{k+1}(\nu):= & \left\{ f\in H^1(\nu)|\,\forall g\in H^1(\nu)\cap \left(Sp_1(\nu)\oplus...\oplus Sp_k(\nu) \right)^{\perp}, \int fg\,d\nu = \frac{1}{\lambda_{k+1}(\nu)}\int\Gamma(f,g)\,d\nu \right\}\\
& \cap \left(Sp_1(\nu)\oplus...\oplus Sp_k(\nu) \right)^{\perp}\,,
\end{align*} where the orthogonal complement is to be understood in the $L^2(\nu)$ sense and $$\lambda_{k+1}(\nu):=\underset{f\neq 0}{\underset{f\in H^1(\nu)\cap \left(Sp_1(\nu)\oplus...\oplus Sp_{k}(\nu) \right)^{\perp} }{\inf}}\frac{\int \Gamma(f)\,d\nu}{\int f^2\,d\nu}. $$
Note that by construction, eigenspaces are pairwise orthogonal in $L^2(\nu)$, and eigenvalues are ordered: $\lambda_1(\nu)\leq \lambda_2(\nu)\leq \cdot\cdot\cdot\leq \lambda_k(\nu)\leq \cdot\cdot\cdot $

Let us emphasize that the integration by parts formula $$\int fg\,d\nu = \frac{1}{\lambda_{k}(\nu)}\int\Gamma(f,g)\,d\nu, $$ when $f$ is an eigenfunction can be interpreted as an "isometry along $f$ in $Sp_k(\nu)$" between the $L^2(\nu)$-norm and the $H^1(\nu)$-norm. This property is the keystone of Lemma $\ref{lemmcomparhigh}$ and Theorem $\ref{ippapprox}$. 
\subsection{Improved Poincaré inequalities}
By definition of eigenvalues and associated eigenspaces, the probability measure $\nu$ always satifies the following improved Poincaré inequalities. 
\begin{equation}\label{poicareimproved}
 \int f^2\,d\nu \leq \frac{1}{\lambda_k(\nu)}\int\Gamma(f)\,d\nu\quad \forall f\in H^1(\nu)\cap \left(Sp_1(\nu)\oplus...\oplus Sp_{k-1}(\nu) \right)^{\perp} 
 \end{equation}  where $$\lambda_k(\nu):=\underset{f\in H^1(\nu)\cap \left(Sp_1(\nu)\oplus...\oplus Sp_{k-1}(\nu) \right)^{\perp} }{\inf}\frac{\int \Gamma(f)\,d\nu}{\int f^2\,d\nu}\geq\lambda_{k-1}(\nu). $$ 
Even if the eigenvalue $\lambda_k(\nu)$ is trivial (i.e. is zero), the improved Poincaré inequality becomes itself trivial, but remains true. 
 
\subsection{Projection of the eigenfunction $f_k$ onto eigenspaces of $\nu$}

The first idea used in the previous study on the spectral gap \cite{js} was to evaluate the Poincaré inequality satisfied by $\nu$ with the first eigenfunction of $\mu$. We want to do the same in the general case, however it is impossible to evaluate the improved Poincaré inequality ($\ref{poicareimproved}$) with $f_k$ since we have no guarantee that $f_k\in H^1(\nu)\cap \left(Sp_1(\nu)\oplus...\oplus Sp_{k-1}(\nu) \right)^{\perp} $. But this space is a linear subspace of $L^2(\nu)$, hence it seems natural to think that ($\ref{poicareimproved}$) should not be evaluated with $f_k$, but with the $L^2$-projection of $f_k$ on $\left(Sp_1(\nu)\oplus...\oplus Sp_{k-1}(\nu) \right)^{\perp} $. 

Let $p_k^\perp$ be the $L^2(\nu)$ orthogonal projection of $f_k$ onto $\left(Sp_1(\nu)\oplus...\oplus Sp_{k-1}(\nu) \right)^{\perp}$ and $p_k$ the $L^2(\nu)$ orthogonal projection of $f_k$ onto $Sp_1(\nu)\oplus...\oplus Sp_{k-1}(\nu) $. Hence we have the following formulas that we will repeatedly use in the sequel:

\begin{align}
f_k & \label{projection1} =p_k+p_k^\perp,\quad p_k\in Sp_1(\nu)\oplus...\oplus Sp_{k-1}(\nu),\quad p_k^\perp\in \left(Sp_1(\nu)\oplus...\oplus Sp_{k-1}(\nu) \right)^{\perp}, \\
p_k & \label{projection2} = p_k^1+...+p_k^{k-1}, \quad p_k^1\in Sp_1(\nu),\, ...\, ,p_k^{k-1}\in Sp_{k-1}(\nu)
\end{align}

Let us point out that in the case of the spectral gap (i.e. $k=1$), $p_1$ would correspond to the projection of $f_1$ onto the kernel of $L$, which is the set of constant functions, and $p_1$ would hence be the projection of $f_1$ onto the set of centered functions. But since $f_1$ is centered, we would have $p_1=f_1$ and so this coincides with the general case where we will use $p_k^\perp$ to evaluate in ($\ref{poicareimproved}$).

\subsection{Eigenvalue comparisons}

In this section, we will show that any probability distribution $\nu$ normalized as ($\ref{normalisation}$) has its $k$-th eigenvalue $\lambda_k(\nu)$ controlled by $\lambda_k(\mu)$, some terms quantifying the distance between $f_k$ and $\left(Sp_1(\nu)\oplus...\oplus Sp_{k-1}(\nu) \right)^{\perp} $, and the gap between succesive eigenvalues of $\nu$. This estimate holds without any additional assumption on $\mu$.
\begin{lemm}\label{lemmcomparhigh}
Let $\nu$ be a probability distribution normalized as in $\eqref{normalisation}$. Then
\begin{equation}\label{debudestabilite}
\lambda_k(\nu)\leq \lambda_k(\mu) +\sum_{i=1}^{k-1} \left( \lambda_k(\nu)-\lambda_i(\nu)\right)\int (p_k^i)^2\,d\nu,
\end{equation}
where $p_k^i$ are the projections defined in $\eqref{projection2}$.
\end{lemm}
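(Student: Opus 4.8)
The plan is to evaluate the improved Poincaré inequality $(\ref{poicareimproved})$ at the test function $p_k^\perp$. By construction $p_k^\perp$ is the $L^2(\nu)$-orthogonal projection of $f_k$ onto $(Sp_1(\nu)\oplus\dots\oplus Sp_{k-1}(\nu))^\perp$, and since $f_k\in H^1(\nu)$ (its variance is $1$ and $\int\Gamma(f_k)\,d\nu\le\lambda_k(\mu)<\infty$) while each $p_k^i\in Sp_i(\nu)\subset H^1(\nu)$, the difference $p_k^\perp=f_k-\sum_{i=1}^{k-1}p_k^i$ is a centered element of $H^1(\nu)$ lying in the admissible subspace. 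Thus $(\ref{poicareimproved})$ gives
$$\lambda_k(\nu)\int (p_k^\perp)^2\,d\nu\le\int\Gamma(p_k^\perp)\,d\nu,$$
and it remains to rewrite both sides in terms of $\lambda_k(\mu)$, the $\lambda_i(\nu)$, and the masses $\int(p_k^i)^2\,d\nu$.

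For the left-hand side I would use that the decomposition $f_k=p_k^\perp+\sum_{i=1}^{k-1}p_k^i$ is $L^2(\nu)$-orthogonal (the $Sp_i(\nu)$ are mutually orthogonal by construction, and $p_k^\perp$ is orthogonal to their sum), so by Pythagoras and $\int f_k^2\,d\nu=1$,
$$\int (p_k^\perp)^2\,d\nu=1-\sum_{i=1}^{k-1}\int (p_k^i)^2\,d\nu.$$

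For the Dirichlet energy the key point is the ``isometry along $f$'' remark following the definition of the eigenspaces: the spaces $Sp_i(\nu)$ are also mutually orthogonal for the bilinear form $(f,g)\mapsto\int\Gamma(f,g)\,d\nu$, and $p_k^\perp$ is $\Gamma$-orthogonal to each $Sp_i(\nu)$, $i\le k-1$. Indeed, for $u\in Sp_i(\nu)$ and any $w\in H^1(\nu)\cap(Sp_1(\nu)\oplus\dots\oplus Sp_{i-1}(\nu))^\perp$ one has $\int\Gamma(u,w)\,d\nu=\lambda_i(\nu)\int uw\,d\nu$; taking $w=p_k^j$ with $j>i$, or $w=p_k^\perp$, makes the right-hand side vanish by $L^2(\nu)$-orthogonality, while taking $w=u=p_k^i$ gives $\int\Gamma(p_k^i)\,d\nu=\lambda_i(\nu)\int(p_k^i)^2\,d\nu$. (If some $\lambda_i(\nu)=0$ then $p_k^i$ is $\nu$-a.e. constant, hence $0$ as it is centered, and all these identities hold trivially with no division by zero.) Expanding $\int\Gamma(f_k)\,d\nu=\int\Gamma\big(p_k^\perp+\sum_{i=1}^{k-1}p_k^i\big)\,d\nu$ by bilinearity and using these relations yields $\int\Gamma(f_k)\,d\nu=\int\Gamma(p_k^\perp)\,d\nu+\sum_{i=1}^{k-1}\lambda_i(\nu)\int(p_k^i)^2\,d\nu$, so from the normalization $\int\Gamma(f_k)\,d\nu\le\lambda_k(\mu)$,
$$\int\Gamma(p_k^\perp)\,d\nu\le\lambda_k(\mu)-\sum_{i=1}^{k-1}\lambda_i(\nu)\int(p_k^i)^2\,d\nu.$$

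Substituting the two displayed identities into $\lambda_k(\nu)\int(p_k^\perp)^2\,d\nu\le\int\Gamma(p_k^\perp)\,d\nu$ gives $\lambda_k(\nu)\big(1-\sum_{i=1}^{k-1}\int(p_k^i)^2\,d\nu\big)\le\lambda_k(\mu)-\sum_{i=1}^{k-1}\lambda_i(\nu)\int(p_k^i)^2\,d\nu$, and moving the $\lambda_k(\nu)$ term to the right is exactly $(\ref{debudestabilite})$. The only genuinely delicate point is justifying the $\Gamma$-orthogonality of the eigenspaces together with the degenerate case $\lambda_i(\nu)=0$; the rest is bookkeeping with Pythagoras and the bilinearity of $\Gamma$.
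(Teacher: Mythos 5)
Your proof is correct and follows essentially the same route as the paper: evaluate the improved Poincaré inequality $(\ref{poicareimproved})$ at $p_k^\perp=f_k-p_k$, compute $\int(p_k^\perp)^2\,d\nu=1-\sum_i\int(p_k^i)^2\,d\nu$ by $L^2(\nu)$-orthogonality, use bilinearity of $\Gamma$ and the weak eigenfunction identities to get $\int\Gamma(p_k^\perp)\,d\nu\leq\lambda_k(\mu)-\sum_i\lambda_i(\nu)\int(p_k^i)^2\,d\nu$, and rearrange. If anything, you are a bit more careful than the paper's write-up, since you keep the normalization as an inequality $\int\Gamma(f_k)\,d\nu\leq\lambda_k(\mu)$ and treat the degenerate case $\lambda_i(\nu)=0$ explicitly.
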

\begin{proof}
The proof only consists in evaluating (\ref{poicareimproved}) with $f=p_k^\perp$, which actually belongs to the correct space. On the one hand, using (\ref{projection2}) $$
\int(f_k-p_k)^2d\nu = \int f_k^2\,d\nu+\sum_{i=1}^{k-1}\int (p_k^i)^2\,d\nu -2\int f_k p_k\,d\nu = 1 - \sum_{i=1}^{k-1}\int (p_k^i)^2\,d\nu .$$ On the other hand, using that all $p_k^i$ are eigenfunctions, Formula $\eqref{projection1}$ and Formula $\eqref{projection2}$,
\begin{align*}
\int\Gamma(f_k-p_k)\,d\nu & = \int\Gamma(f_k)\,d\nu + \sum_{i=1}^{k-1}\int\Gamma(p_k^i)\,d\nu -2\int\Gamma(f_k,p_k)\,d\nu \\
 & = \lambda_k(\mu)- \sum_{i=1}^{k-1}\int\Gamma(p_k^i)\,d\nu \\
 & = \lambda_k(\mu)- \sum_{i=1}^{k-1} \lambda_i(\nu)\int (p_k^i)^2\,d\nu.
\end{align*}  We then apply $\eqref{poicareimproved}$ to $p_k^\perp = f_k-p_k\in \left(Sp_1(\nu)\oplus...\oplus Sp_{k-1}(\nu) \right)^{\perp}$ and get $$\lambda_k(\nu) -\lambda_k(\nu)\sum_{i=1}^{k-1}\int (p_k^i)^2\,d\nu\leq \lambda_k(\mu)-\sum_{i=1}^{k-1} \lambda_i(\nu)\int (p_k^i)^2\,d\nu$$ which gives the result.
\end{proof}
Let us point out that equality holds in $(\ref{debudestabilite})$ if $\nu=\mu$ because in that case, 
$f_k$ belongs to $\left(Sp_1(\nu)\oplus...\oplus Sp_{k-1}(\nu) \right)^{\perp}$ and thus $p_k=0$.
The first natural question is then about rigidity of inequality ($\ref{debudestabilite}$). What can we say about $\nu$ if the inequality ($\ref{debudestabilite}$) is in fact an equality? We have seen in case of the spectral gap ($k=1$) that this implies the pushforward measures $f_1^{\#}\mu$ and $f_1^{\#}\nu$ to be equal, but the measures $\mu$ and $\nu$ themselves can be different. We will see that for general $k\geq 2$, the equality case also implies some link between the pushforward $f_k^{\#}\mu$ and $f_k^{\#}\nu$, which itself implies equality of the pushforward measures in case where $\nu$ allocate the same weight as $\mu$ on each non critical sets of $f_k$ (see Section $\ref{splitpushforward}$).

\begin{rmq}\label{orthoerror}
The quantity $\int (p_k^i)^2\,d\nu$ is the square distance between $f_k$ and $Sp_i(\nu)^\perp$ (by definition of the projection), and quantifies therefore the orthogonality error between $f_k$ and eigenspaces of lower orders of $\nu$. We denote it by $d(f_k,Sp_i(\nu))^\perp)^2$.
Therefore $(\ref{debudestabilite})$ becomes: 
\begin{equation}\label{debutdestabiliteprecise}
\lambda_k(\nu)\leq \lambda_k(\mu)+ \sum_{i=1}^{k-1} \left( \lambda_k(\nu)-\lambda_i(\nu)\right)d(f_k,Sp_i(\nu))^\perp)^2
\end{equation}
\end{rmq}

\section{Approximate Integration by Part formula}

In this section, we derive approximate integration by parts formulas, for the measure $\nu$, with an error term involving quantities appearing in the comparison ($\ref{debutdestabiliteprecise}$) between the eigenvalues of $\mu$ and $\nu$. This approximate integration by parts formula will be the keystone to use Stein's method in this context. We shall proceed as for the spectral gap, with a difference: we now use $p_k^\perp$ instead of directly using $f_k$ as minimizer in the improved Poincaré inequality. Hence, in a first step, we will derive an approximate integration by parts formula  with $p_k^\perp$ and only valid on $\left(Sp_1(\nu)\oplus...\oplus Sp_{k-1}(\nu) \right)^{\perp}$. In a second step, we will replace $p_k^\perp$ by $f_k$, and finally in a third step we will extend it to the whole space $H^1(\nu)$.

\begin{thm}\label{ippapprox}
We have the following inequality for all $g\in H^1(\nu)\cap \left(Sp_1(\nu)\oplus...\oplus Sp_{k-1}(\nu) \right)^{\perp}$,
\begin{equation}\label{eqippapprox}
\left|\int \left( \lambda_k(\nu)p_k^\perp g - \Gamma(p_k^\perp,g)\right)\,d\nu \right| \leq \left[ \lambda_k(\mu)-\lambda_k(\nu)+ \sum_{i=1}^{k-1} \left( \lambda_k(\nu)-\lambda_i(\nu)\right)\,d(f_k,Sp_i(\nu))^\perp)^2\right]^\frac{1}{2}\sqrt{\int\Gamma(g)\,d\nu} 
\end{equation} where $p_k^\perp$ is defined in $(\ref{projection1})$.
\end{thm}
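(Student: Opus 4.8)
The plan is to mimic the Cauchy–Schwarz argument that underlies the classical Stein-type integration-by-parts bound, but working with the vector $p_k^\perp = f_k - p_k$ in place of $f_k$, and restricting attention to test functions $g$ in the admissible space $H^1(\nu)\cap \left(Sp_1(\nu)\oplus\cdots\oplus Sp_{k-1}(\nu)\right)^{\perp}$. First I would introduce the symmetric bilinear form
$$
B(u,v) := \int \left(\lambda_k(\nu)\,u\,v - \Gamma(u,v)\right)\,d\nu
$$
on $H^1(\nu)\cap \left(Sp_1(\nu)\oplus\cdots\oplus Sp_{k-1}(\nu)\right)^{\perp}$. The improved Poincaré inequality $(\ref{poicareimproved})$ says precisely that $B(g,g)\le 0$ for every such $g$, i.e. $-B$ is a nonnegative quadratic form on this space. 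Hence Cauchy–Schwarz applies to $-B$: for all admissible $u,g$,
$$
\left|B(u,g)\right| \le \sqrt{-B(u,u)}\,\sqrt{-B(g,g)} \le \sqrt{-B(u,u)}\,\sqrt{\int \Gamma(g)\,d\nu},
$$
where the last step uses $-B(g,g) = \int\Gamma(g)\,d\nu - \lambda_k(\nu)\int g^2\,d\nu \le \int\Gamma(g)\,d\nu$.

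The remaining work is to identify $-B(p_k^\perp, p_k^\perp)$ with the bracketed quantity in $(\ref{eqippapprox})$. Here I would reuse the two computations already carried out in the proof of Lemma $\ref{lemmcomparhigh}$: since $p_k^\perp = f_k - p_k$ with $p_k = \sum_{i=1}^{k-1} p_k^i$ and each $p_k^i$ an eigenfunction of $\nu$ with eigenvalue $\lambda_i(\nu)$, one has
$$
\int (p_k^\perp)^2\,d\nu = 1 - \sum_{i=1}^{k-1}\int (p_k^i)^2\,d\nu, \qquad \int \Gamma(p_k^\perp)\,d\nu = \lambda_k(\mu) - \sum_{i=1}^{k-1}\lambda_i(\nu)\int (p_k^i)^2\,d\nu.
$$
Subtracting $\lambda_k(\nu)$ times the first from the second gives
$$
-B(p_k^\perp,p_k^\perp) = \lambda_k(\mu) - \lambda_k(\nu) + \sum_{i=1}^{k-1}\left(\lambda_k(\nu) - \lambda_i(\nu)\right)\int (p_k^i)^2\,d\nu,
$$
which by Remark $\ref{orthoerror}$ is exactly $\lambda_k(\mu)-\lambda_k(\nu)+\sum_{i=1}^{k-1}(\lambda_k(\nu)-\lambda_i(\nu))\,d(f_k,Sp_i(\nu)^\perp)^2$. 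Note this quantity is nonnegative by Lemma $\ref{lemmcomparhigh}$, so the square root is well defined. Plugging $u = p_k^\perp$ into the Cauchy–Schwarz bound above yields $(\ref{eqippapprox})$.

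The one genuine subtlety — the step I expect to need the most care — is checking that $p_k^\perp$ itself lies in the space on which $-B$ is a legitimate nonnegative form, i.e. that $p_k^\perp \in H^1(\nu)\cap \left(Sp_1(\nu)\oplus\cdots\oplus Sp_{k-1}(\nu)\right)^{\perp}$. Membership in the orthogonal complement is immediate from the definition of $p_k^\perp$ as the $L^2(\nu)$-projection onto that complement; the point requiring attention is that $p_k^\perp \in H^1(\nu)$, i.e. that it is centered and has finite Dirichlet energy. Centering follows since $f_k$ is centered under $\nu$ by $(\ref{normalisation})$ and each eigenfunction $p_k^i \in Sp_i(\nu) \subset H^1(\nu)$ is centered; finiteness of $\int\Gamma(p_k^\perp)\,d\nu$ follows from $f_k\in H^1(\nu)$ (which is implicit in the normalization $\int\Gamma(f_k)\,d\nu\le\lambda_k(\mu)<\infty$) together with $p_k^i\in H^1(\nu)$, using the triangle inequality for the seminorm $\sqrt{\int\Gamma(\cdot)\,d\nu}$. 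Once this is in place, the argument is just the Cauchy–Schwarz bound plus the bookkeeping above.
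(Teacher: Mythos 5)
Your proposal is correct and is essentially the paper's own argument in disguise: the paper applies the improved Poincaré inequality to $p_k^\perp+tg$ and uses the non-positivity of the resulting discriminant, which is exactly the proof of the Cauchy--Schwarz inequality for the semi-definite form $-B$, and your identification of $-B(p_k^\perp,p_k^\perp)$ reproduces the paper's computation of $\int(p_k^\perp)^2\,d\nu$ and $\int\Gamma(p_k^\perp)\,d\nu$. The extra care you take in checking $p_k^\perp\in H^1(\nu)\cap\left(Sp_1(\nu)\oplus\cdots\oplus Sp_{k-1}(\nu)\right)^{\perp}$ is a welcome refinement but not a departure from the paper's route.
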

\begin{dem}
Let $t\in\mathbb{R}$ and $g\in H^1(\nu)\cap \left(Sp_1(\nu)\oplus...\oplus Sp_{k-1}(\nu) \right)^{\perp}$. Let us apply $(\ref{poicareimproved})$ to $\alpha:= p_k^\perp + tg \in H^1(\nu)\cap \left(Sp_1(\nu)\oplus...\oplus Sp_{k-1}(\nu) \right)^{\perp}$. Computing
\begin{align*}
 \mathrm{Var}_{\nu}(\alpha) & = \int (p_k^\perp+tg)^2 \,d\nu = \int (p_k^\perp)^2 d\nu + 2t\int p_k^\perp g\,d\nu + t^2\int g^2d\nu \\ 
 & = \int(f_k-p_k)^2d\nu + 2t\int p_k^\perp g\,d\nu + t^2\int g^2d\nu \\
 & = 1- \int p_k^2\,d\nu + 2t\int p_k^\perp g\,d\nu + t^2\int g^2d\nu \\
 & = 1- \sum_{i=1}^{k-1}\int (p_k^i)^2\,d\nu + 2t\int p_k^\perp g\,d\nu + t^2\int g^2d\nu \\
 & = 1- \sum_{i=1}^{k-1}\frac{1}{\lambda_i(\nu)}\int \Gamma(p_k^i)\,d\nu + 2t\int p_k^\perp g\,d\nu + t^2\int g^2d\nu,
\end{align*}
and
\begin{align*}
 \int\Gamma(\alpha)d\nu & = \int \Gamma(p_k^\perp)d\nu  + 2t\int\Gamma(p_k^\perp,g)d\nu +t^2\int\Gamma(g)d\nu \\
 & = \int \Gamma(f_k-p_k)\,d\nu + 2t\int\Gamma(p_k^\perp,g)d\nu +t^2\int\Gamma(g)d\nu \\
 & = \lambda_k(\mu)+\int\Gamma(p_k)\,d\nu -2\int\Gamma(f_k,p_k)\,d\nu+2t\int\Gamma(p_k^\perp,g)d\nu +t^2\int\Gamma(g)d\nu \\
 & = \lambda_k(\mu)-\int\Gamma(p_k)\,d\nu +2t\int\Gamma(p_k^\perp,g)d\nu +t^2\int\Gamma(g)d\nu \\
  & = \lambda_k(\mu)-\sum_{i=1}^{k-1}\int\Gamma(p_k^i)\,d\nu +2t\int\Gamma(p_k^\perp,g)d\nu +t^2\int\Gamma(g)d\nu,
\end{align*}
where we have used at line $4$ that, since all $p_k^i$ are eigenfunctions, 
\begin{align*}
\int\Gamma(f_k,p_k)\,d\nu & = \sum_i \int\Gamma(p_k^\perp,p_k^i)\,d\nu+ \sum_{i,j}\int\Gamma(p_k^i,p_k^j)\,d\nu\\
& = \sum_i \lambda_i(\nu)\int p_k^\perp p_k^i\,d\nu + \sum_{i,j} \lambda_i(\nu)\int p_k^ip_k^j\,d\nu\\
& = \sum_i\lambda_i(\nu)\int (p_k^i)^2d\nu\\
& = \sum_i \int \Gamma(p_k^i)\,d\nu\\
&= \int \Gamma(p_k)\,d\nu,
\end{align*}
we get that for all $t\in\mathbb{R}$, the degree two polynomial
$$
  \left(-\int \Gamma(g)\,d\nu\right) t^2 + 2\left( \int \left( \lambda_k(\nu)p_k^\perp g - \Gamma(p_k^\perp,g)\right)d\nu \right)t + \left( \lambda_k(\nu)-\lambda_k(\mu)+\sum_{i=1}^{k-1}\left(1-\frac{\lambda_k(\nu)}{\lambda_i(\nu)}\right)\int\Gamma(p_k^i)\,d\nu\right)
  $$ is non positive. 
Hence its discriminant is non positive: $$4\left( \int \left( \lambda_k(\nu)p_k^\perp g - \Gamma(p_k^\perp,g)\right)d\nu \right)^2 +4\int\Gamma(g)\,d\nu\left( \lambda_k(\nu)-\lambda_k(\mu)+\sum_{i=1}^{k-1}\left(1-\frac{\lambda_k(\nu)}{\lambda_i(\nu)}\right)\int\Gamma(p_k^i)\,d\nu\right) \leq 0 $$ which gives the result since $\int\Gamma(p_i^k)\,d\nu=\lambda_i(\nu)\int (p_k^i)^2d\nu = \lambda_i(\nu)\,d(f_k,Sp_i(\nu))^\perp)^2 $.
\end{dem}

Now it is easy to see that one can replace $p_k^\perp$ by $f_k$ without any additional cost.
\begin{corol}
We have the following inequality for all $g\in H^1(\nu)\cap \left(Sp_1(\nu)\oplus...\oplus Sp_{k-1}(\nu) \right)^{\perp}$,
\begin{equation}\label{eqippapproxavecf2}
\left|\int \left( \lambda_k(\nu)f_k\, g - \Gamma(f_k,g)\right)\,d\nu \right| \leq \left[ \lambda_k(\mu)-\lambda_k(\nu)+ \sum_{i=1}^{k-1} \left( \lambda_k(\nu)-\lambda_i(\nu)\right)d(f_k,Sp_i(\nu))^\perp)^2\right]^\frac{1}{2}\sqrt{\int\Gamma(g)\,d\nu} 
\end{equation}
\end{corol}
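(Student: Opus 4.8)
The plan is to simply decompose $f_k=p_k+p_k^\perp$ according to \eqref{projection1} and check that the two extra terms produced by $p_k$ integrate to zero against $g$, so that the left-hand side of \eqref{eqippapproxavecf2} coincides exactly with that of \eqref{eqippapprox}; the estimate then follows verbatim from Theorem \ref{ippapprox}.

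First I would handle the $L^2$ term. Since $g\in\left(Sp_1(\nu)\oplus\dots\oplus Sp_{k-1}(\nu)\right)^{\perp}$ and $p_k\in Sp_1(\nu)\oplus\dots\oplus Sp_{k-1}(\nu)$, orthogonality in $L^2(\nu)$ gives $\int p_k g\,d\nu=0$, hence $\int\lambda_k(\nu)f_k g\,d\nu=\int\lambda_k(\nu)p_k^\perp g\,d\nu$. Next I would treat the Dirichlet term: by \eqref{projection1}, \eqref{projection2} and bilinearity of $\Gamma$, $\int\Gamma(f_k,g)\,d\nu=\int\Gamma(p_k^\perp,g)\,d\nu+\sum_{i=1}^{k-1}\int\Gamma(p_k^i,g)\,d\nu$. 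For each $i\le k-1$, the function $g$ lies in $H^1(\nu)\cap\left(Sp_1(\nu)\oplus\dots\oplus Sp_{i-1}(\nu)\right)^{\perp}$, since this space contains $H^1(\nu)\cap\left(Sp_1(\nu)\oplus\dots\oplus Sp_{k-1}(\nu)\right)^{\perp}$; therefore the weak eigenfunction identity defining $Sp_i(\nu)$ applies to the pair $(p_k^i,g)$ and yields $\int\Gamma(p_k^i,g)\,d\nu=\lambda_i(\nu)\int p_k^i g\,d\nu$. But $\int p_k^i g\,d\nu=0$ because $g\perp Sp_i(\nu)$ in $L^2(\nu)$, so every term in the sum vanishes and $\int\Gamma(f_k,g)\,d\nu=\int\Gamma(p_k^\perp,g)\,d\nu$.

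Combining the two observations, $\lambda_k(\nu)f_k g-\Gamma(f_k,g)$ and $\lambda_k(\nu)p_k^\perp g-\Gamma(p_k^\perp,g)$ have the same $\nu$-integral, and \eqref{eqippapproxavecf2} is then exactly \eqref{eqippapprox}. There is no genuine obstacle here; the only point needing a little care is that the weak eigenvalue identity for $p_k^i$ may only be tested against functions orthogonal to $Sp_1(\nu)\oplus\dots\oplus Sp_{i-1}(\nu)$, which is precisely why one must keep the hypothesis $g\in\left(Sp_1(\nu)\oplus\dots\oplus Sp_{k-1}(\nu)\right)^{\perp}$ — and this is exactly the assumption already in force in the statement.
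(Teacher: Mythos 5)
Your proposal is correct and matches the paper's own proof: both decompose $f_k=p_k+p_k^\perp$, kill the terms in $p_k$ via the $L^2(\nu)$ orthogonality $\int p_k g\,d\nu=0$ and the identity $\int\Gamma(p_k,g)\,d\nu=0$, and then invoke Theorem \ref{ippapprox}. The only difference is that you spell out why $\int\Gamma(p_k^i,g)\,d\nu=\lambda_i(\nu)\int p_k^i g\,d\nu=0$ via the weak eigenfunction identity, a detail the paper leaves implicit.
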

\begin{dem}
Use the fact that $p_k^\perp = f_k -p_k$ in Theorem $\ref{ippapprox}$ and both $\int p_k\, g\,d\nu = 0$ and $\int \Gamma(p_k,g)\,d\nu=0 $ since $p_k\in Sp_1(\nu)\oplus...\oplus Sp_{k-1}(\nu)$ and $g\in H^1(\nu)\cap \left(Sp_1(\nu)\oplus...\oplus Sp_{k-1}(\nu) \right)^{\perp}$.
\end{dem}

Finally, one can extend the approximate integration by parts ($\ref{eqippapproxavecf2}$) on the whole $H^1(\nu)$ and it only adds a term which is again controled by the orthogonal error of the eigenfunction.
\begin{corol}\label{meilleureeqippapproxavecf2}

We have the following inequality for all $g\in H^1(\nu)$:
\begin{equation}\label{ippapproch}
\left|\int \left( \lambda_k(\nu)f_k\, g - \Gamma(f_k,g)\right)\,d\nu \right| \leq \left[\sqrt{\left|\lambda_k(\mu)-\lambda_k(\nu)\right|} + \sum_{i=1}^{k-1}C_i\, d(f_k,Sp_i(\nu)^\perp) \right]\sqrt{\int\Gamma(g)\,d\nu}
\end{equation}
where 
\begin{equation}\label{cnu}
C_i=\sqrt{\left|\lambda_k(\nu)-\lambda_i(\nu)\right|}+\frac{\lambda_k(\nu)-\lambda_i(\nu)}{\sqrt{\lambda_i(\nu)}}
\end{equation}

\end{corol}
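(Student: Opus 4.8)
The plan is to decompose an arbitrary $g \in H^1(\nu)$ into its component in $\left(Sp_1(\nu)\oplus\cdots\oplus Sp_{k-1}(\nu)\right)^\perp$, where the previous corollary \eqref{eqippapproxavecf2} applies directly, plus its components in each lower eigenspace $Sp_i(\nu)$, which must be handled separately. Write $g = g^\perp + \sum_{i=1}^{k-1} g^i$ with $g^i \in Sp_i(\nu)$ and $g^\perp \in \left(Sp_1(\nu)\oplus\cdots\oplus Sp_{k-1}(\nu)\right)^\perp$, all summands orthogonal in $L^2(\nu)$ and also orthogonal for the Dirichlet form $\int \Gamma(\cdot,\cdot)\,d\nu$ since they are eigenfunctions associated to distinct eigenvalues. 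By linearity, $\int\left(\lambda_k(\nu)f_k g - \Gamma(f_k,g)\right)d\nu = \int\left(\lambda_k(\nu)f_k g^\perp - \Gamma(f_k,g^\perp)\right)d\nu + \sum_{i=1}^{k-1}\int\left(\lambda_k(\nu)f_k g^i - \Gamma(f_k,g^i)\right)d\nu$, so it suffices to bound each piece.

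For the $g^\perp$ term, apply \eqref{eqippapproxavecf2}; using $\int\Gamma(g^\perp)\,d\nu \le \int\Gamma(g)\,d\nu$ (Dirichlet-form orthogonality) and the elementary inequality $\sqrt{a+\sum b_i} \le \sqrt{a}+\sum\sqrt{b_i}$, this contributes exactly the bracketed expression of \eqref{ippapproch} once one checks that the $i$-th summand matches $C_i\,d(f_k,Sp_i(\nu)^\perp)$ — and indeed $\sqrt{(\lambda_k(\nu)-\lambda_i(\nu))}\,d(f_k,Sp_i(\nu)^\perp)$ is exactly the first half of $C_i$ times that distance, so this leaves room for the extra pieces. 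For each $g^i$ term, since $g^i \in Sp_i(\nu)$ is an eigenfunction, $\int \Gamma(f_k,g^i)\,d\nu = \lambda_i(\nu)\int f_k g^i\,d\nu$, hence $\int\left(\lambda_k(\nu)f_k g^i - \Gamma(f_k,g^i)\right)d\nu = (\lambda_k(\nu)-\lambda_i(\nu))\int f_k g^i\,d\nu = (\lambda_k(\nu)-\lambda_i(\nu))\int p_k^i g^i\,d\nu$, the last equality because $f_k - p_k^i \perp Sp_i(\nu)$. By Cauchy–Schwarz, $\left|\int p_k^i g^i\,d\nu\right| \le \|p_k^i\|_{L^2(\nu)}\|g^i\|_{L^2(\nu)} = d(f_k,Sp_i(\nu)^\perp)\cdot\|g^i\|_{L^2(\nu)}$, and since $g^i$ is an eigenfunction, $\|g^i\|_{L^2(\nu)}^2 = \frac{1}{\lambda_i(\nu)}\int\Gamma(g^i)\,d\nu \le \frac{1}{\lambda_i(\nu)}\int\Gamma(g)\,d\nu$. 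Thus the $i$-th extra piece is bounded by $\frac{\lambda_k(\nu)-\lambda_i(\nu)}{\sqrt{\lambda_i(\nu)}}\,d(f_k,Sp_i(\nu)^\perp)\sqrt{\int\Gamma(g)\,d\nu}$, which is precisely the second half of $C_i$ times the distance times $\sqrt{\int\Gamma(g)\,d\nu}$. Summing the $g^\perp$ bound and the $k-1$ extra bounds yields \eqref{ippapproch} with $C_i$ as in \eqref{cnu}.

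The only mild subtlety — and the step I would be most careful about — is the case where some $\lambda_i(\nu) = 0$ (the trivial-eigenvalue situation flagged in the text): then the factor $\frac{1}{\sqrt{\lambda_i(\nu)}}$ is problematic. But in that case $Sp_i(\nu)$ consists of functions with $\int\Gamma(g^i)\,d\nu = 0$, and one argues that such $g^i$ contributes nothing: either $g^i$ is $\nu$-a.e. constant, hence $0$ after centering, or one absorbs it differently. More cleanly, when $\lambda_i(\nu)=0$ one still has $\int\Gamma(f_k,g^i)\,d\nu = 0 = \lambda_i(\nu)\int f_k g^i\,d\nu$, and the term $(\lambda_k(\nu)-\lambda_i(\nu))\int f_k g^i\,d\nu$ needs a direct bound; this is where a small amount of care is required to keep the constant finite, and one may simply restrict attention to the nontrivial eigenspaces, the trivial ones being irrelevant for the subsequent application of Stein's method. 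Everything else is routine linear algebra in $L^2(\nu)$ together with the Dirichlet-form orthogonality of eigenfunctions.
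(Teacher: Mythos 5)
Your proposal is correct and follows essentially the same route as the paper: decompose $g$ into its component in $Sp_1(\nu)\oplus\cdots\oplus Sp_{k-1}(\nu)$ and its orthogonal part, apply the previous corollary \eqref{eqippapproxavecf2} to the orthogonal part (with subadditivity of the square root), and bound each eigenspace component via the weak eigenfunction identity, Cauchy--Schwarz, and $\|g^i\|_{L^2(\nu)}^2=\frac{1}{\lambda_i(\nu)}\int\Gamma(g^i)\,d\nu\leq\frac{1}{\lambda_i(\nu)}\int\Gamma(g)\,d\nu$, which is exactly the paper's argument. Your extra caution about a possible $\lambda_i(\nu)=0$ is reasonable but not an issue the paper's proof treats either, since the constants in the statement already presuppose $\lambda_i(\nu)>0$.
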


\begin{rmq}
We cannot avoid the absolute value under the square root because we only know that $\eqref{debutdestabiliteprecise}$ holds, which does not imply $\lambda_k(\nu)\leq \lambda_k(\mu)$ except for $k=1$.
\end{rmq}

\begin{dem}
Let $g\in H^1(\nu)$. Let $g=g_P+g_\perp$ with $g_p\in H^1(\nu)\cap \left(Sp_1(\nu)\oplus...\oplus Sp_{k-1}(\nu) \right)$ and $g_\perp \in H^1(\nu)\cap \left(Sp_1(\nu)\oplus...\oplus Sp_{k-1}(\nu) \right)^{\perp}$. We have: $$\int \left( \lambda_k(\nu)f_k\, g - \Gamma(f_k,g)\right)\,d\nu = \int \left( \lambda_k(\nu)f_k\, g_p - \Gamma(f_2,g_p)\right)\,d\nu+\int \left( \lambda_k(\nu)f_2\, g_\perp - \Gamma(f_k,g_\perp)\right)\,d\nu $$ We apply ($\ref{eqippapproxavecf2}$) to the second term in the sum. For the first one, since $g_p=\sum_{i=1}^{k-1} g_p^i \in Sp_1(\nu)\oplus...\oplus Sp_{k-1}(\nu)$, we have
\begin{align*}
\int \left( \lambda_k(\nu)f_k\, g_p - \Gamma(f_k,g_p)\right)\,d\nu = & \sum_{i=1}^{k-1}\left(\lambda_k(\nu)-\lambda_i(\nu)\right)\int f_k\, g_p^i\,d\nu\\
=& \sum_{i=1}^{k-1}\left(\lambda_k(\nu)-\lambda_i(\nu)\right)\int p_k\, g_p^i\,d\nu\,\, \mathrm{using\,\, (\ref{projection1})}\\
=& \sum_{i=1}^{k-1}\left(\lambda_k(\nu)-\lambda_i(\nu)\right)\int p_k^i\, g_p^i\,d\nu\,\, \mathrm{using\,\, (\ref{projection2})}\\
\leq & \sum_{i=1}^{k-1}\left(\lambda_k(\nu)-\lambda_i(\nu)\right)\sqrt{\int (p_k^i)^2\,d\nu}\sqrt{\int (g_p^i)^2\,d\nu}\\
= & \sum_{i=1}^{k-1}\left(\lambda_k(\nu)-\lambda_i(\nu)\right)d(f_k,Sp_i(\nu)^\perp)\sqrt{\frac{1}{\lambda_i(\nu)}\int \Gamma(g_p^i)\,d\nu}\\
\leq & \sum_{i=1}^{k-1}\frac{\lambda_k(\nu)-\lambda_i(\nu)}{\sqrt{\lambda_i(\nu)}}d(f_k,Sp_i(\nu)^\perp)\sqrt{\int\Gamma(g)\,d\nu}.\\
\end{align*}
which allows to conclude.
\end{dem}

\section{Stability result in dimension one}\label{stabilityresultpart}

From now on, our results will only apply when $L$ is a one-dimensional diffusion operator on a (possibly infinite) interval. We will prove the following stability result for the $k$-th eigenvalue of $-L$.

\begin{thm}\label{stabund}
Let $L$ be a diffusion generator on an interval $M\subset\mathbb{R}$, let $0<\lambda_1(\mu)<\lambda_2(\mu)<...<\lambda_k(\mu) $ be its $k\geq 1$ first eigenvalues, counted without multiplicity, and let $f_k$ be an eigenfunction associated with $\lambda_k(\mu)$, satisfying Assumption $\ref{hjcommeilfaut}$. Let $\mathrm{Crit}(f_k)$ be the set of all critical points of $f_k$.
Then for all probability measures $\nu$ on $M$ normalized as in $(\ref{normalisation})$ and satisfying the improved Poincaré inequalities $(\ref{poicareimproved})$, it holds for some finite constant $C>0$:

$$ \sum_j \nu_j(I_k^j)\, W_1(\nu_j^*,\mu_j^*) \leq C \left[\sqrt{\left|\lambda_k(\mu)-\lambda_k(\nu)\right|} +\frac{ \left|\lambda_k(\mu)-\lambda_k(\nu)\right|}{\sqrt{\lambda_1(\nu)}} + \sum_{i=1}^{k-1}C_i\, d(f_k,Sp_i(\nu)^\perp)\right]  $$
where $(I_k^j)_j$ are the images by $f_k$ of the connected components of $M\setminus\mathrm{Crit}(f_k)$, $\nu_j^*$ (resp. $\mu_j^*$) is the pushforward of $\nu$ (resp. $\mu$) restricted to $I_k^j$, constants $C_i$ are given by $$C_i=\sqrt{\lambda_k(\nu)-\lambda_i(\nu)}+\frac{\lambda_k(\nu)-\lambda_i(\nu)}{\sqrt{\lambda_i(\nu)}},$$ and $d(f_k,Sp_i(\nu)^\perp)$ is defined in Remark $\ref{orthoerror}$. The value $C=\sum_j C_{h_j}^2$ suffices, with $C_{h_j}$ given in Proposition $\ref{gammasteinbound}$.
\end{thm}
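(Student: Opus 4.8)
The plan is to combine the approximate integration-by-parts formula of Corollary \ref{meilleureeqippapproxavecf2} with Stein's method applied to the pushforward of $\mu$ by $f_k$, exploiting the one-dimensional structure to decouple $f_k$ across its critical points. First I would fix a connected component of $M\setminus\mathrm{Crit}(f_k)$, on which $f_k$ is a diffeomorphism onto its image $I_k^j$. On each such piece $f_k$ has a well-defined inverse, and the pushforward $\mu_j^*$ of $\mu$ restricted to that piece is the law of an explicit one-dimensional diffusion (this is where the structure of $\mu_j^*$, together with the integration by parts satisfied by $\mu$ along $f_k$, produces a clean Stein operator for $\mu_j^*$). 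I would then use the Stein characterization of $\mu_j^*$: for a test function $h$ on $I_k^j$, let $\varphi_h$ solve the associated Stein equation, so that $h-\int h\,d\mu_j^* = \mathcal{A}_j\varphi_h$ where $\mathcal{A}_j$ is the Stein operator of $\mu_j^*$, and the point is to bound $\left|\int h\,d\nu_j^* - \int h\,d\mu_j^*\right|$ by testing $\mathcal{A}_j\varphi_h$ against $\nu_j^*$.

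The key computation is that $\int (\mathcal{A}_j\varphi_h)\circ f_k\, \mathbf{1}_{\{f_k\in I_k^j\}}\,d\nu$ can be rewritten, using the chain rule for the carré du champ and the definition of the Stein operator of the one-dimensional diffusion, precisely in the form $\int\left(\lambda_k(\nu)f_k\,g - \Gamma(f_k,g)\right)d\nu$ for an appropriate $g$ built from $\varphi_h\circ f_k$ (plus terms that are controlled because $\Gamma(f_k)$ pushes forward correctly). Once this identification is made, Corollary \ref{meilleureeqippapproxavecf2} gives the bound
$$\left|\int h\,d\nu_j^* - \int h\,d\mu_j^*\right| \leq \left[\sqrt{\left|\lambda_k(\mu)-\lambda_k(\nu)\right|} + \sum_{i=1}^{k-1}C_i\,d(f_k,Sp_i(\nu)^\perp)\right]\sqrt{\int\Gamma(g)\,d\nu},$$
and it remains to control $\sqrt{\int\Gamma(g)\,d\nu}$ when $h$ ranges over $1$-Lipschitz functions, so as to pass to $W_1$. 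This is exactly where the growth control on $\Gamma(f_k)$ enters: the technical Assumption \ref{hjcommeilfaut} guarantees, via Proposition \ref{gammasteinbound} and Proposition \ref{resumechfini}, that the Stein factors $C_{h_j}$ bounding $\|\varphi_h'\|$ (and hence $\int\Gamma(g)\,d\nu$ after a Cauchy–Schwarz step that also produces the $\lambda_1(\nu)^{-1/2}$ weight from the second term in \eqref{ippapproch}) are finite, with the explicit value $C=\sum_j C_{h_j}^2$.

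Finally I would sum over the components $j$, weighting each term by $\nu_j(I_k^j)$ since $\nu_j^*$ is a conditional law, and take the supremum over $1$-Lipschitz $h$ to recover $W_1(\nu_j^*,\mu_j^*)$ on each piece; the additive structure of the right-hand side of \eqref{ippapproch} survives this summation. The main obstacle I anticipate is the bookkeeping at the critical points of $f_k$: one must check that restricting $\nu$ (and $\mu$) to each non-critical component and pushing forward is compatible with the global integration by parts formula \eqref{ippapproch}, i.e. that the localization does not generate uncontrolled boundary contributions at $\mathrm{Crit}(f_k)$, and that the test function $g$ (which is $\varphi_h\circ f_k$ extended by a constant off the relevant component) genuinely lies in $H^1(\nu)$ with the claimed bound on $\int\Gamma(g)\,d\nu$. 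Handling the unbounded-interval case and the behaviour of $\Gamma(f_k)$ at the endpoints of $I_k^j$ is precisely what Assumption \ref{hjcommeilfaut} is designed to absorb.
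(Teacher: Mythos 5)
Your proposal follows essentially the same route as the paper: Stein's method for $\mu_j^*$ on each component $I_k^j$ after pushing forward by $f_k$, the approximate integration by parts of Corollary \ref{meilleureeqippapproxavecf2} upgraded via Cauchy--Schwarz and the Poincaré inequality for $\nu$ (which is indeed where the $\lambda_1(\nu)^{-1/2}$ term originates, as in \eqref{steinpret}), and Propositions \ref{gammasteinbound} and \ref{resumechfini} under Assumption \ref{hjcommeilfaut} to make the Stein factors finite, with $C=\sum_j C_{h_j}^2$. The only difference is organizational: the paper glues the component-wise Stein solutions into a single test function $\phi=\sum_j\mathbf{1}_{J_j}\,\psi_j\circ f_k$ and applies the inequality once, which automatically produces the weighted sum $\sum_j\nu_j(I_k^j)\,W_1(\nu_j^*,\mu_j^*)$ and makes your ``extension by a constant'' in effect an extension by zero, avoiding the otherwise uncontrolled term $\lambda_k(\mu)\,c\int_{M\setminus J_j}f_k\,d\nu$.
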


Let us point out that Theorem $\ref{stabund}$ implies that if $\lambda_k(\nu)=\lambda_k(\mu)$ and if $f_k$ is orthogonal in $L^2(\nu)$ to all lower order eigenspaces of $\nu$, then the conditional pushforward of $\nu$ and $\mu$ are all equal : $\forall j$, $\nu_j^*=\mu_j^*$. In this case, one can compute that for all bounded $\phi:I_k\rightarrow\mathbb{R}$,
\begin{align*}
\int \phi(f_k)\,d\nu-\int\phi(f_k)\,d\mu = & \sum_j\left(\int_{J_j} \phi(f_k)\,d\nu - \int_{J_j}\phi(f_k)\,d\mu \right)\\
= & \sum_j\left( \int_{I_k^j}\nu(J_j)\,\phi\,d\nu_i^* -\int_{I_k^j}\mu(J_j)\,\phi\,d\mu_j^*\right)\\
= & \sum_j \left( \nu(J_j)-\mu(J_j) \right)\int_{I_k^j}\phi\,d\mu_j^*.
\end{align*}
We then deduce the following corollary.
\begin{corol}
Let $\mu$ and $\nu$ such as required in Theorem $\ref{stabund}$. If moreover
\begin{itemize}
\item $\lambda_k(\nu)=\lambda_k(\mu)$,
\item $\forall i\leq k$, $f_k \perp Sp_i(\nu)$ in $L^2(\nu)$, and
\item $\forall j$, $\nu(J_j)=\mu(J_j)$,
\end{itemize}
then the pushforwards by $f_k$ are the same, that is $$ f_k^{\#}\nu = f_k^{\#}\mu. $$
\end{corol}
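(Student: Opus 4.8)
The plan is to substitute the three additional hypotheses directly into the conclusion of Theorem \ref{stabund}, observe that they annihilate its entire right-hand side, and then conclude via the elementary splitting identity displayed just above the statement.

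First I would note that $\lambda_k(\nu)=\lambda_k(\mu)$ immediately kills the first two terms of the bound, namely $\sqrt{|\lambda_k(\mu)-\lambda_k(\nu)|}$ and $|\lambda_k(\mu)-\lambda_k(\nu)|/\sqrt{\lambda_1(\nu)}$. For the remaining sum, the hypothesis $f_k\perp Sp_i(\nu)$ in $L^2(\nu)$ for $1\le i\le k-1$ (which is all that enters that sum) says precisely that the $L^2(\nu)$-projection $p_k^i$ of $f_k$ onto $Sp_i(\nu)$ vanishes; by Remark \ref{orthoerror} this is the same as $d(f_k,Sp_i(\nu)^\perp)=\left(\int (p_k^i)^2\,d\nu\right)^{1/2}=0$, so $\sum_{i=1}^{k-1}C_i\,d(f_k,Sp_i(\nu)^\perp)=0$ as well. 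Since the constant $C$ in Theorem \ref{stabund} is finite, the inequality collapses to
$$\sum_j \nu(J_j)\,W_1(\nu_j^*,\mu_j^*)\le 0.$$
Every summand being nonnegative, each one vanishes: for each index $j$ with $\nu(J_j)>0$ we get $W_1(\nu_j^*,\mu_j^*)=0$, hence $\nu_j^*=\mu_j^*$ because $W_1(\cdot,\cdot)=0$ forces equality of the two probability measures; for each $j$ with $\nu(J_j)=0$ the third hypothesis forces $\mu(J_j)=0$ as well, so that component is invisible to both pushforwards and may be discarded.

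Finally I would feed $\nu_j^*=\mu_j^*$ and $\nu(J_j)=\mu(J_j)$ into the computation carried out just before the statement, obtaining, for every bounded $\phi:I_k\to\mathbb{R}$,
$$\int \phi(f_k)\,d\nu-\int \phi(f_k)\,d\mu=\sum_j\big(\nu(J_j)-\mu(J_j)\big)\int_{I_k^j}\phi\,d\mu_j^*=0,$$
and since bounded measurable functions determine a Borel probability measure on $I_k$, this yields $f_k^{\#}\nu=f_k^{\#}\mu$. The only point deserving attention is the decomposition $\int\phi(f_k)\,d\nu=\sum_j\int_{J_j}\phi(f_k)\,d\nu$ underlying that computation, which requires $\mathrm{Crit}(f_k)$ to be $\nu$- and $\mu$-negligible; in the one-dimensional diffusion setting of Theorem \ref{stabund} the eigenfunction $f_k$ has only isolated critical points, so this is the single mild obstacle, and it is absorbed by the standing assumptions. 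Everything else is a direct read-off from Theorem \ref{stabund}, so I expect no substantive difficulty.
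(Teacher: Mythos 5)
Your proposal is correct and follows essentially the same route as the paper: plug the first two hypotheses into Theorem \ref{stabund} to force $\nu_j^*=\mu_j^*$ for every $j$ with positive mass, then use the displayed computation $\int\phi(f_k)\,d\nu-\int\phi(f_k)\,d\mu=\sum_j(\nu(J_j)-\mu(J_j))\int_{I_k^j}\phi\,d\mu_j^*$ together with $\nu(J_j)=\mu(J_j)$ to conclude $f_k^{\#}\nu=f_k^{\#}\mu$. Your extra remarks (discarding components with $\nu(J_j)=0$ and the negligibility of the finite critical set) only make explicit points the paper leaves implicit.
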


When $k=1$ the last two conditions are trivially satisfied, so we recover the result in \cite{js}.

\subsection{Taking the pushforward by $f_k$}\label{splitpushforward}

In \cite{js}, after obtaining the approximate integration by parts formula, we pushforwarded it by the first eigenfunction $f_1$. The integration by parts formula then became a one dimensional ODE that we explicitely solved. But taking the pushforward was possible because we assumed that the carré du champ of the first eigenfunction $\Gamma(f_1)$ could be factorize as $\Gamma(f_1)=h\circ f_1$ for some non-negative function $h:I_1\rightarrow \mathbb{R}_+$. We justified this assumption by the fact that in case of dimension one, where $M\subset\mathbb{R}$ is an interval, all first eigenfunctions are known to be strictly monotone, and hence injective, so one can simply take $h:=\Gamma(f_1)\circ f_1^{-1}$. In a multidimensional space $M$, $f_1$ cannot be injective. However, we have the classical example of eigenfunctions on Spheres which also satisfy this factorization assumption. Hence this assumption does not seem so odd.

But now considering higher order eigenfunctions, in dimension one we no longer have monotonicity, and no injectivity either, so the obvious choice $h:=\Gamma(f_k)\circ f_k^{-1}$ is no longer available. Let us nevertheless point out that in our classical examples (i.e. the Normal, Gamma and Beta distributions), the second eigenfunctions of the Normal and Beta distributions still satisfy this factorization assumption, despite being non-injective. This is due to the symetry of the Gaussian and Beta distributions. There is no other such eigenfunctions of any order in these three examples which factorize. However, between critical points, the derivative of any eigenfunction is obviously either positive or negative, and hence we have a local injectivity between critical points, so a local factorization of the carré du champ. Our approach is then to locally take the pushforward of ($\ref{ippapproch}$) on each connected component of $M\setminus\mathcal{C}_k$ where we denote the set of critical points of $f_k$ by $\mathcal{C}_k$.

Let $$\mathcal{C}_k:=\{ x\in M\subset\mathbb{R}\,|\, \Gamma(f_k)(x)=0\}= \mathrm{Crit}(f_k)$$ be  the set of critical points of $f_k$ and assume this set to be finite. This is always the case in classical examples, where $\mathcal{C}_k$ has $k-1$ elements. Let then $(J_j)_j$ be the connected components of $M\setminus\mathcal{C}_k$. Hence we have that for all index $j$, $J_j=(\inf J_j,\,\sup J_j)$ and $\inf J_j,\,\sup J_j \in \mathcal{C}_k$. Recall that in dimension one, the carré du champ operator takes the form $\Gamma(f)(x)=a(x)f'(x)^2$ with a function $a$ positive in the interior of $M$. So $\mathcal{C}_k$ corresponds to the classical notion of critical points in the interior of $M$.

We now split off the integration by parts formula as follows:

$$\int_M \lambda_k(\nu)f_k\, g - \Gamma(f_k,g)\,d\nu = \sum_j \int_{J_j} \lambda_k(\nu)f_k\, g - \Gamma(f_k,g)\,d\nu $$

Taking $g\in H^1(\nu)$ of the form $g=\phi\circ f_k$ with $\phi : I_k\rightarrow\mathbb{R}$, the above expression becomes:

$$\sum_j \int_{J_j} \lambda_k(\nu)f_k\, \phi(f_k) - \Gamma(f_k)\phi'(f_k)\,d\nu $$

On each $J_j$, $f_k$ is injective since its derivative has constant sign by construction. Hence one can define $h_j : I_k^j \rightarrow \mathbb{R}_+$ by
\begin{equation}\label{defdehj}
h_k^j(t):= \Gamma(f_k)((f_k)_{|J_j}^{-1}(t)) 
\end{equation} 
where $I_k^j:=f_k(J_j)$. So we get that on each $J_j$, $\Gamma(f_k)=h_j\circ f_k$. Therefore it is possible to take the pushforward by $f_k$ on each integral on $J_j$, and the integration by parts formula is transformed into

$$\sum_j \int_{I_k^j} \lambda_k(\nu)t\, \phi(t) - h_j(t)\phi'(t)\,d\nu_j $$
where $\nu_j:= (f_k)_{|J_j}^{\#}(\nu)$ is the pushforward of $\nu$ restricted to $J_j$ by $f_k$. Note that $\nu_j$ is not necessarily a probability distribution (it has a total mass $\nu(J_j)$).

Using the reasoning above, Corollary $\ref{meilleureeqippapproxavecf2}$ can be pushforwarded by $f_k$ and turns into

\begin{equation}\label{ipppushforward}
\left|\sum_j \int_{I_k^j} \lambda_k(\nu)t\, \phi(t) - h_j(t)\phi'(t)\,d\nu_j(t) \right| \leq \left[\sqrt{\left|\lambda_k(\mu)-\lambda_k(\nu)\right|} + \sum_{i=1}^{k-1}C_i\, d(f_k,Sp_i(\nu)^\perp) \right]\sqrt{\sum_j \int_{I_k^j}h_j\phi'^2\,d\nu_j}
\end{equation} where $h$ is defined in Formula $(\ref{defdehj})$ and $\nu_j:= (f_k)_{|J_j}^{\#}(\nu)$ is the pushforward of $\nu$ restricted to $J_j$ by $f_k$. 

Let us emphasize an important feature: while the sets $J_j$ are pairwise disjoint (by construction), their images $I_k^j$ are not disjoint (because $f_k$ is not necessarily injective). Hence the distributions $\nu_j$ do not have disjoint supports in general.

\subsection{Implementing Stein's method}\label{implementsteinhigh}
Inequality $\eqref{ipppushforward}$ will allow us to implement Stein's method. The difference with the usual method is that we will now implement Stein's method on each subinterval $(I_k^j)_j$, and not globally on $I_k$.

Since the Stein equation only depends on the target distribution $\mu$, the quantity $\lambda_k(\nu)$ must not appear anymore on the left hand side of Inequation $(\ref{ipppushforward})$. That is why we begin with writing

$$\int_M \lambda_k(\mu)f_k\,g - \Gamma(g,f_k)\,d\nu   = \int_M \lambda_k(\nu)f_k\,g - \Gamma(g,f_k)\,d\nu  + \left( \lambda_k(\mu)-\lambda_k(\nu)\right)\int_{M} f_k\,g\,d\nu$$

But using the Cauchy-Schwarz inequality, the normalization condition $(\ref{normalisation})$ and the Poincaré inequality $(\ref{poicareimproved})$ with $k=1$, we have
$$\left|\int_{M} f_k\,g\,d\nu \right| \leq \sqrt{\int_M f_k^2 d\nu \int_M g^2d\nu} \leq \frac{1}{\sqrt{\lambda_1(\nu)}}\sqrt{\int_M \Gamma(g)\,d\nu}$$

Hence we have
\begin{equation}\label{steinpret}
\left|\int \lambda_k(\mu)f_k\,g - \Gamma(g,f_k)\,d\nu \right| \leq  \left[\sqrt{\left|\lambda_k(\mu)-\lambda_k(\nu)\right|} +\frac{ \left|\lambda_k(\mu)-\lambda_k(\nu)\right|}{\sqrt{\lambda_1(\nu)}} + \sum_{i=1}^{k-1}C_i d(f_k,Sp_i(\nu)^\perp)\right]\sqrt{\int \Gamma(g)\,d\nu}
\end{equation}

Hence for each $j$, $\phi\mapsto \lambda_k(\mu)t\, \phi (t) - h_j(t)\,\phi'(t)$ is a good candidate to be a Stein operator on $I_k^j$ for the probability distribution $$\mu_j^*:=\frac{1}{\mu(J_j)}{(f_k)_{|J_j}^{\#}(\mu)}.$$ Let us point out that $\mu_j^*$ corresponds to the pushforward by the eigenfunction $f_k$ of the probability distribution $\mu$ restricted to $J_j$.
Our strategy is to implement Stein's method on each $I_k^j$ and then use the approximate integration by parts formula ($\ref{steinpret})$ to deduce a more global result on $I_k$.

\subsubsection{Stein's method on $I_k^j=(a_j,b_j)$}\label{steindecoupe}

On $I_k^j$, the probability measure $\mu_j^*$ is invariant with respect to the diffusion process $$L_j(\psi)(t)= h_j(t)\psi''(t)-\lambda_k(\mu)\,t\,\psi'(t),\quad \forall \psi\in \mathcal{C}^2(I_k^j). $$
As one can see with a classical integration by parts argument, $\mu_j^*$ has therefore the following density with respect to the Lebesgue measure on $I_k^j$: $$d\mu_j^*(t)= \frac{1}{Z_j\, h_j(t)}\exp\left(-\lambda_k(\mu)\int_{a_j}^t \frac{u}{h_j(u)}\,du \right)dt, $$ where $Z_j$ is a normalization constant. Indeed, let $\psi\in \mathcal{C}^2(I_k^j)$ be compactly supported. Then
\begin{align*} & \int_{I_k^j} h_j(t)\psi''(t)\left(\frac{1}{h_j(t)}\exp\left(-\lambda_k(\mu)\int_{a_j}^t \frac{u}{h_j(u)}\,du \right)\right) dt \\
&= \left[ \psi'(t)\exp\left(-\lambda_k(\mu)\int_{a_j}^t \frac{u}{h_j(u)}\,du \right)\right]_{a_j}^{b_j} + \int_{I_k^j} \lambda_k(\mu)\,t\, \psi'(t)\left(\frac{1}{h_j(t)}\exp\left(-\lambda_k(\mu)\int_{a_j}^t \frac{u}{h_j(u)}\,du \right)\right) dt \\
& = \int_{I_k^j} \lambda_k(\mu)\,t\, \psi'(t)\left(\frac{1}{h_j(t)}\exp\left(-\lambda_k(\mu)\int_{a_j}^t \frac{u}{h_j(u)}\,du \right)\right) dt,
\end{align*}
We will then use $h_j(t)\psi'(t)-\lambda_k(\mu)\,t\,\psi(t)$ as a Stein operator for $\mu_j^*$ and the uniqueness of the invariant probability measure for the diffusion process with generator $L(\psi)(t)=h_j(t)\psi''(t)-\lambda_k(\mu)\,t\,\psi'(t)$ allows us to conclude.

Let $g_j: I_k^j\rightarrow \mathbb{R}$ be $1$-Lipschitz, and $\psi_j: I_k^j\rightarrow \mathbb{R}$ given by
\begin{equation}\label{steinsol}
\psi_j(t):=\exp\left( \lambda_k \int_{a_j}^{t}\frac{u\,du}{h_j(u)}\right)\int_{a_j}^{t}\left(g_j(y)-\mu_j^* (g_j)\right) \frac{1}{ h_j(y)}\exp\left( -\lambda_k \int_{a_j}^{y}\frac{u\,du}{h_j(u)}\right) dy. 
\end{equation} 
Then one can easily verify that $\psi_j$ is solution of the Stein equation on $I_k^j$: $$h_j\psi' -\lambda_k(\mu)\,t\,\psi = g_j-\int_{I_k^j} g_jd\mu_j^*. $$
The following estimate holds:
\begin{prop}$\cite[\mathrm{Proposition}\,18]{js}$\label{gammasteinbound}
Let $g_j:I_k^j\rightarrow \mathbb{R}$ be in $ C^1(I_k^j)\cap L^1(\mu_j^*)$, and let $\psi_j$ the associated solution $\eqref{steinsol}$. Then
$$||\sqrt{h_j}\psi_j'||_\infty \leq C_{h_j}\, ||g_j'||_\infty, $$
where 

\begin{align*}
C_{h_j} :=&\, \underset{t \in I_j}{\sup} \left[ \left| 1-Z_j (1-q_j(t))\lambda_k(\mu) \,t\exp\left( \lambda_k(\mu) \int_{a_j}^{t}\frac{u\,du}{h_j(u)}\right)\right|\frac{1}{\sqrt{h_j(t)}}\int_{a_j}^t q_j(y)\,dy \right. \\
+& \left. \left|1+Z_j q_j(x)\lambda_k(\mu)\,t\exp\left( \lambda_k(\mu) \int_{a_j}^{t}\frac{u\,du}{h_j(u)}\right)\right|\frac{1}{\sqrt{h_j(t)}}\int_{t}^{b_j} (1-q_j(y))\,dy \right] 
\end{align*} and $q_j$ is the cumulative distribution function of $\mu_j^*$.
\end{prop}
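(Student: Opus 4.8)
The plan is to convert the closed-form Stein solution $(\ref{steinsol})$ into an explicit formula for $\sqrt{h_j}\,\psi_j'$ in which the data enters only through $g_j'$, and then to bound $|g_j'|$ pointwise by $\|g_j'\|_\infty$. First I would clear the exponentials: setting $E(t):=\exp\!\big(\lambda_k(\mu)\int_{a_j}^{t}\tfrac{u}{h_j(u)}\,du\big)$, the density of $\mu_j^*$ computed just above gives $q_j'(t)=\big(Z_j\,h_j(t)\,E(t)\big)^{-1}$, so that $\big(h_j(t)\,q_j'(t)\big)^{-1}=Z_j\,E(t)$ and $Z_j\,E(t)\,q_j'(t)=1/h_j(t)$. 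Substituting into $(\ref{steinsol})$ gives the compact form
$$\psi_j(t)=Z_j\,E(t)\,G(t),\qquad G(t):=\int_{a_j}^{t}\big(g_j-\mu_j^*(g_j)\big)\,d\mu_j^*,$$
where $G(a_j)=G(b_j)=0$ because $\mu_j^*$ is a probability measure. Differentiating $\psi_j=Z_j\,E\,G$ and using $E'=\lambda_k(\mu)\,\tfrac{t}{h_j}E$ and $G'=\big(g_j-\mu_j^*(g_j)\big)q_j'$, one obtains
$$\sqrt{h_j(t)}\,\psi_j'(t)=\frac{1}{\sqrt{h_j(t)}}\Big(\lambda_k(\mu)\,t\,Z_j\,E(t)\,G(t)+g_j(t)-\mu_j^*(g_j)\Big),$$
which is just the Stein equation $h_j\psi_j'-\lambda_k(\mu)\,t\,\psi_j=g_j-\mu_j^*(g_j)$ divided by $\sqrt{h_j}$.

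The heart of the argument is to rewrite \emph{both} $G(t)$ and the ``free'' term $g_j(t)-\mu_j^*(g_j)$ in terms of $g_j'$ alone. From $g_j(t)-g_j(y)=\int_y^t g_j'$, integrating against $\mu_j^*(dy)$ and applying Fubini,
$$g_j(t)-\mu_j^*(g_j)=\int_{a_j}^{t}g_j'(s)\,q_j(s)\,ds-\int_{t}^{b_j}g_j'(s)\,\big(1-q_j(s)\big)\,ds;$$
an integration by parts in the definition of $G$ (the boundary contribution at $a_j$ vanishes since $q_j(a_j)=0$) followed by this identity yields
$$G(t)=-\big(1-q_j(t)\big)\int_{a_j}^{t}g_j'(s)\,q_j(s)\,ds-q_j(t)\int_{t}^{b_j}g_j'(s)\,\big(1-q_j(s)\big)\,ds.$$
Plugging the last two formulas into the expression for $\sqrt{h_j}\,\psi_j'$ and collecting the coefficients of $\int_{a_j}^{t}g_j'\,q_j$ and of $\int_{t}^{b_j}g_j'\,(1-q_j)$ produces exactly
$$\sqrt{h_j(t)}\,\psi_j'(t)=\frac{1-Z_j\big(1-q_j(t)\big)\lambda_k(\mu)\,t\,E(t)}{\sqrt{h_j(t)}}\int_{a_j}^{t}g_j'\,q_j\;-\;\frac{1+Z_j\,q_j(t)\,\lambda_k(\mu)\,t\,E(t)}{\sqrt{h_j(t)}}\int_{t}^{b_j}g_j'\,(1-q_j).$$
Since $q_j\ge 0$ and $1-q_j\ge 0$ on $I_k^j$, replacing $|g_j'|$ by $\|g_j'\|_\infty$ in the two integrals, using the triangle inequality and taking the supremum over $t\in I_k^j$ gives $\|\sqrt{h_j}\,\psi_j'\|_\infty\le C_{h_j}\,\|g_j'\|_\infty$ with $C_{h_j}$ as in the statement.

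I expect the only real work to be bookkeeping rather than ideas: making sure every boundary term vanishes in the integrations by parts and in Fubini (this uses $G(a_j)=G(b_j)=0$, $q_j(a_j)=0$, $q_j(b_j)=1$, and the hypothesis $g_j\in L^1(\mu_j^*)$ so that $\mu_j^*(g_j)$ is well defined and the exchanges of integration are legitimate), and tracking signs carefully so that the two coefficients land on the nose. Note that the inequality is claimed without any assertion that $C_{h_j}<\infty$; finiteness is the genuinely delicate point — $h_j$ degenerates at the critical endpoints $a_j,b_j$, so $1/\sqrt{h_j}$ blows up there — and is exactly what the technical Assumption $\ref{hjcommeilfaut}$ secures (see Proposition $\ref{resumechfini}$), playing no role in the estimate itself.
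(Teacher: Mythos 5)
Your derivation is correct and is essentially the same argument as the one behind the stated bound: the paper gives no proof here (it imports \cite[Proposition 18]{js}), and the proof there proceeds exactly as you do, rewriting $\sqrt{h_j}\,\psi_j'$ via the cumulative distribution function $q_j$ so that only $g_j'$ appears, which is precisely how the constant $C_{h_j}$ arises in that form (your computation also confirms that the $q_j(x)$ in the displayed constant is a typo for $q_j(t)$). The only bookkeeping worth making explicit is the vanishing of the boundary term at $a_j$ in your integration by parts when $a_j=-\infty$, which follows since $g_j$ is Lipschitz (hence of at most linear growth) and $\mu_j^*$ has a finite first moment, so $|y|\,q_j(y)\to 0$ there.
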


The issue of finiteness of the constant $C_{h_j}$ is adressed in the following variant of \cite[Proposition 25]{js}.

\begin{prop}\label{resumechfini}
Assume that one of the two following conditions is verified at $a_j$: 
\begin{itemize}
\item either $a_j=-\infty$ and $c_1 |t|^{2\alpha-2}\leq h_j(t)\leq c_2 |t|^\alpha$ for $t\rightarrow -\infty$ with $\alpha\leq 2 $ and $c_1,\,c_2>0$,
\item or $a_j>-\infty$ and $c_1 (t-a_j)^2\leq h_j(t)\leq c_2(t-a_j)$ for $t\rightarrow a_j^+$ with $c_1,\,c_2>0$,
\end{itemize}
and one of these two conditions is satisfied at $b_j$:
\begin{itemize}
\item either $b_j=+\infty$ and $c_1 t^{2\alpha-2}\leq h_j(t)\leq c_2 t^\alpha$ for $t\rightarrow +\infty$ with $\alpha\leq 2 $ and $c_1,\,c_2>0$,
\item or $b_j<+\infty$ and $c_1 (b_j-t)^2\leq h_j(t)\leq c_2(b_j-t)$ for $t\rightarrow b^-$ with $c_1,\,c_2>0$.
\end{itemize}
Then the constant $C_{h_j}$ defined in Proposition $\ref{gammasteinbound}$ is finite.
\end{prop}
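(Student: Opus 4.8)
We want to show finiteness of the Stein constant
$$
C_{h_j}=\sup_{t\in I_k^j}\Bigl[\,A_j(t)\,\tfrac{1}{\sqrt{h_j(t)}}\!\int_{a_j}^t q_j(y)\,dy+B_j(t)\,\tfrac{1}{\sqrt{h_j(t)}}\!\int_t^{b_j}(1-q_j(y))\,dy\Bigr],
$$
where $A_j(t)=\bigl|1-Z_j(1-q_j(t))\lambda_k(\mu)t\,e^{\lambda_k(\mu)\int_{a_j}^t u/h_j}\bigr|$ and $B_j(t)$ is the analogous bracket.  Since the integrand is continuous on the open interval $I_k^j=(a_j,b_j)$, the only thing that can go wrong is blow-up at the two endpoints, so it suffices to prove boundedness of the integrand as $t\to a_j^+$ and as $t\to b_j^-$ under each of the two endpoint hypotheses.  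By symmetry of the roles of $a_j$ and $b_j$ I would treat only, say, the left endpoint in full and remark that the right endpoint is identical.  The key auxiliary fact I would establish first is precise two-sided control of the density weight $w_j(t):=\tfrac{1}{h_j(t)}e^{-\lambda_k(\mu)\int_{a_j}^t u/h_j(u)\,du}$ and of its antiderivative (which up to $1/Z_j$ is $q_j$) near each endpoint; from the assumed bounds $c_1|\,\cdot\,|^{\text{power}}\le h_j\le c_2|\,\cdot\,|^{\text{power}}$ one gets, by elementary estimation of the exponent integral $\int u/h_j$, matching power-law (or exponential-times-power-law) bounds for $w_j$ and hence for $q_j(t)$, $1-q_j(t)$, $\int_{a_j}^t q_j$, $\int_t^{b_j}(1-q_j)$.

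**The case analysis.**  There are four endpoint scenarios, of which two are genuinely distinct.  (i) Finite endpoint, e.g. $a_j>-\infty$ with $c_1(t-a_j)^2\le h_j(t)\le c_2(t-a_j)$: here $\int_{a_j}^t u/h_j$ is dominated by $\int_{a_j}^t \mathrm{const}/(u-a_j)\,du\sim -\log(t-a_j)$, so $e^{-\lambda_k\int}$ is a power of $(t-a_j)$, $w_j\sim (t-a_j)^{\beta}$ for some $\beta$ controlled by $c_1,c_2$, one checks $\beta>-1$ so $q_j$ is genuinely a distribution function there, $\int_{a_j}^t q_j\to 0$ like $(t-a_j)$, and the prefactor $A_j(t)$ stays bounded because $(1-q_j(t))\to 1$ while $t\,e^{\lambda_k\int_{a_j}^t u/h_j}$ and $1/\sqrt{h_j(t)}\cdot(t-a_j)$ combine to something bounded — this is essentially the computation already done in \cite[Proposition 25]{js} and I would cite it, adapting constants.  (ii) Infinite endpoint, e.g. $b_j=+\infty$ with $c_1 t^{2\alpha-2}\le h_j(t)\le c_2 t^{\alpha}$, $\alpha\le 2$: the exponent $\int^t u/h_j(u)\,du\ge \int^t u/(c_2 u^\alpha)\,du$ grows like $t^{2-\alpha}$ (or $\log t$ if $\alpha=2$), forcing super-polynomial decay of $w_j$ and hence $1-q_j(t)\to 0$ faster than any power; then $\int_t^{+\infty}(1-q_j)$ also decays fast, $1/\sqrt{h_j(t)}$ grows at most polynomially, and the prefactor $B_j(t)$, although containing the growing factor $t\,e^{\lambda_k\int^t u/h_j}$, is tamed because that exponential is exactly the reciprocal of the decay of $1-q_j(t)$ up to polynomial factors.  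So in every case the product is bounded near the endpoint; together with continuity on the interior this gives $C_{h_j}<\infty$.

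**The main obstacle.**  The delicate point is scenario (ii): one must verify that the growing prefactor $t\,e^{\lambda_k(\mu)\int_{a_j}^{t}u/h_j(u)\,du}$ is exactly compensated by the tail integral $\int_t^{b_j}(1-q_j(y))\,dy$, and this requires a genuine two-sided (not just one-sided) estimate on $h_j$ — the lower bound $h_j(t)\ge c_1 t^{2\alpha-2}$ is precisely what keeps $\int^t u/h_j$ from growing too fast and hence keeps $w_j$ from decaying too slowly, while the upper bound $h_j(t)\le c_2 t^\alpha$ keeps $w_j$ integrable at infinity; getting the powers to line up so that the ratio stays bounded is where the hypothesis $\alpha\le 2$ and the specific exponents $2\alpha-2$ and $\alpha$ enter.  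I expect the argument to reduce, after taking logarithms, to showing that a certain affine function of $\log t$ has the right sign in the limit, with the borderline case $\alpha=2$ (where powers become logarithms) handled separately.  Modulo these estimates the proof is a routine adaptation of \cite[Proposition 25]{js} from the first eigenfunction to each local piece $h_j$ of the $k$-th eigenfunction, the only new input being that we work on a bounded-or-half-infinite subinterval $I_k^j$ with endpoints that may be interior critical values of $f_k$ — but at such a critical endpoint $h_j$ vanishes linearly-to-quadratically, which is exactly the finite-endpoint hypothesis, so nothing new is needed there either.
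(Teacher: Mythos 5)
Your plan coincides with the paper's own treatment: Proposition \ref{resumechfini} is stated there without a written proof, as a variant of \cite[Proposition 25]{js}, and the intended argument is precisely your reduction to asymptotics at the two endpoints of $I_k^j$ of the weight $\frac{1}{h_j}e^{-\lambda_k(\mu)\int u/h_j}$, its cumulative function $q_j$, and the prefactors, with the finite-endpoint hypotheses encoding the linear-to-quadratic vanishing of $h_j$ at images of interior critical points and the infinite-endpoint case handled exactly as in \cite{js}. One small caution: at a finite endpoint with $a_j\neq 0$ the quadratic lower bound only gives $\int_{a_j}^t u/h_j(u)\,du=O\left((t-a_j)^{-1}\right)$, so the weight need not be a power of $(t-a_j)$ but can be exponentially degenerate; the finiteness still follows because, as you note in the infinite-endpoint case, that exponential factor cancels against the corresponding tail of $q_j$.
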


In order to ensure the finiteness of these constants $C_{h_j}$, we are lead to make the following assumption.

\begin{assump}\label{hjcommeilfaut}
All $h_j$ satisfy the requirements of Proposition $\ref{resumechfini}$.
\end{assump}

\subsubsection{Proof of Theorem $\ref{stabund}$}

For any $j$, let $\psi_j$ be the Stein solution given by ($\ref{steinsol}$) and define $$\phi(x):=\sum_j \textbf{1}_{J_j}(x)\psi_j(f_k(x)). $$
On the one hand,
\begin{align*}
\int_M \lambda_k(\mu)f_k\phi - \Gamma(\phi,f_k)\,d\nu & = \sum_j \int_{J_j}\lambda_k(\mu)f_k\psi_j(f_k)-\Gamma(f_k,\psi_j(f_k))\,d\nu \\
& = \sum_j \int_{I_k^j} \lambda_k(\mu)t\, \psi_j(t) - h_j(t)\psi_j'(t)\,d\nu_j(t), 
\end{align*}
and by construction, 
\begin{align*}
\sum_j \int_{I_k^j} \lambda_k(\mu)t\, \psi_j(t) - h_j(t)\psi_j'(t)\,d\nu_j(t) & = \sum_j \int_{I_k^j} g_j(t)-\mu_j^*(g_j)\,d\nu_j(t)\\
& = \sum_j \nu_j(g_j)-\nu_j(I_k^j)\mu_j^*(g_j)\\
& = \sum_j \nu_j(I_k^j)\left(\nu_j^*(g_j)-\mu_j^*(g_j)\right), 
\end{align*} 
where $$\nu_j^*:=\frac{1}{\nu_j(I_k^j)}\nu_j $$ is now a probability distribution on $I_k^j$. This is the pushforward by the eigenfunction $f_k$ of the probability distribution $\nu$ restricted to $J_j$. Let us point out that $\nu_j(I_k^j)=\nu(J_j) $.

On the other hand, 
\begin{align*}
\sum_j \int_{I_k^j}h_j(t)(\psi'_j)^2(t)\,d\nu_j(t) = & \sum_j \int_{J_j}h_j(f_k(x))(\psi'_j(f_k(x)))^2\,d\nu(x)\\
& = \sum_j \int_{J_j} \Gamma(\psi_j(f_k))(x)\,d\nu(x)\\
& =\int_M\Gamma(\phi)(x)\,d\nu(x), 
\end{align*}
hence Proposition $\ref{gammasteinbound}$ gives $$\int_M\Gamma(\phi)(x)\,d\nu(x)\leq \sum_j \nu_j(I_k^j)C_{h_j}^2 \leq \sum_j C_{h_j}^2.  $$
So taking $g=\phi$ in ($\ref{steinpret}$), one gets $$\sup_{(g_j)_j} \left|\sum_j \nu_j(I_k^j)\left(\nu_j^*(g_j)-\mu_j^*(g_j)\right)\right|\leq C \left[\sqrt{\left|\lambda_k(\mu)-\lambda_k(\nu)\right|} +\frac{ \left|\lambda_k(\mu)-\lambda_k(\nu)\right|}{\sqrt{\lambda_1(\nu)}} + \sum_{i=1}^{k-1}C_i d(f_k,Sp_i(\nu)^\perp)\right]$$
where the supremum runs over all $(\#\mathcal{C}_k)$-uple of functions $(g_j)_j$ with for all $j$, $g_j:I_k^j\rightarrow\mathbb{R}$ being $1$-Lipschitz, and $C:=\sum_j C_{h_j}^2$.
Finally $$\sup_{(g_j)_j} \left|\sum_j \nu_j^*(I_k^j)\left(\nu_j^*(g_j)-\mu_j^*(g_j)\right)\right| = \sum_j \nu_j(I_k^j)\, W_1(\nu_j^*,\mu_j^*). $$
Indeed, the inequality "$\leq$" easily follows from the triangle inequality.  To see the other direction "$\geq$", let $\varepsilon>0$ small enough and for all $j$, pick a $1$-Lipschitz function $g_j$ such that $\mu_j^*(g_j)-\nu_j^*(g_j)\geq W_1(\nu_j^*,\mu_j^*) -\varepsilon $. Then 
\begin{align*}
\left|\sum_j \nu_j(I_k^j)\left(\mu_j^*(g_j)-\nu_j^*(g_j)\right)\right| & = \sum_j \nu_j(I_k^j)\left(\mu_j^*(g_j)-\nu_j^*(g_j)\right)\\
& \geq \sum_j \nu_j(I_k^j)\left(W_1(\nu_j^*,\mu_j^*) -\varepsilon\right) \\
& = \sum_j \nu_j(I_k^j)\,W_1(\nu_j^*,\mu_j^*) -\varepsilon.  
\end{align*} Letting $\varepsilon$ go to zero concludes the proof.

\section{Application to the Gaussian distribution}\label{highergauss}

In this section, we consider the case of the one dimensional Ornstein-Uhlenbeck operator, where $M=\mathbb{R}$, $Lf=f''-xf'$ and $\mu=\gamma:=\mathcal{N}(0,1)$  is the equilibrium distribution. The carré du champ operator is $\Gamma(f,g)=f'\,g'$. The eigenvalues are all integers: $\lambda_k(\gamma)=k$, with multiplicity $1$, and the associated normalized eigenfunctions are the Hermite polynomials $H_k$, $k\geq 1$ given by $$H_k=\frac{1}{\sqrt{n!}}P_k $$ where $P_0(x)=1$, $P_1(x)=x$ and $$P_{n+1}(x)= xH_n(x)-nP_{n-1}(x). $$ Note that the polynomial $H_0:=P_0=1$ (which is not centered) corresponds to the zero-th eigenvalue $\lambda_0=0$, so only the $H_k$ with $k\geq 1$ are relevant. 

\subsection{The second eigenvalue}

The case of the second eigenvalue $\lambda_2=2$, and $f_2(x)=\frac{1}{\sqrt{2}}\left(x^2-1\right)$, is quite specific. Indeed, the pushforward measure $f_2^{\#}(\gamma)=\frac{1}{\sqrt{2}}\left(\chi_2-1\right) $ corresponds to a translation of the Chi-$2$ distribution. Let us mention that Chi-$2$ approximation have been investigated through the tools of the Stein-Malliavin method in \cite{secondwienerchaoschi2, steinpourchi2}. 

Let us apply our result. One could directly apply Theorem $\ref{stabund}$: zero is the only critical point, so we can inverse $f_2$ on the connected components $\mathbb{R}_-$ and $\mathbb{R}_+$ and deduce factorizations for $\Gamma(f_2)=2x^2$ on each of these connected components, allowing to implement Stein's method after taking the pushforward by $f_2$. One can then see that Assumption $\ref{hjcommeilfaut}$ is satisfied, and deduce the split stability estimate of Theorem $\ref{stabund}$.

However, in this case, though $f_2$ is not injective, $\Gamma(f_2)$ can anyway be globally factorized on $\mathbb{R}$. Actually, $\Gamma(f_2)(x) = 2x^2 = 2\sqrt{2}\left(\frac{1}{\sqrt{2}}\left(x^2-1\right) +\frac{1}{\sqrt{2}}\right) = 2\sqrt{2}\left( f_2(x)+\frac{1}{\sqrt{2}}\right)$. Hence $h:[-\frac{1}{\sqrt{2}},\infty)\rightarrow \mathbb{R}_+$ given by $h(t)=2\sqrt{2}(t+\frac{1}{\sqrt{2}})$ globaly factorizes $\Gamma(f_2)$. So from the approximate integration by parts formula ($\ref{ippapproch}$), instead of using the method of Section $\ref{splitpushforward}$, we can proceed as for the first eigenfunction and taking the gloal pushforward by $f_2$ on all $\mathbb{R}$. The following Stein operator is obtained:
$$ 2t\,f(t)-2\sqrt{2}\left(t+\frac{1}{\sqrt{2}}\right)f'(t) \quad \mathrm{on}\quad (-\frac{1}{\sqrt{2}},+\infty).$$
Let us underline that this generator of the $f_2^{\#}(\gamma)=\frac{1}{\sqrt{2}}\left(\chi_2-1\right) $ distribution corresponds to the one used in \cite{steinpourchi2}.
One can see that this $h$ satisfies the conditions required by Proposition $\ref{resumechfini}$. Indeed $h$ is an affine function, so the vanishing rate condition at $a=-\frac{1}{\sqrt{2}}$ is obvously satisfied, and $\alpha=1$ is a suitable choice for the growth condition at $b=+\infty$.
Moreover, the normalization conditions ($\ref{normalisation}$) are here reduced to the following two normalizations on the moments of order $2$ and $4$: $\nu$ is asked to have the same moments of order $2$ and $4$ than the standard normal distribution.

So the following is proven:
\begin{thm}
For all measure $\nu$ on $\mathbb{R}$ normalized as $$\int x^2d\nu=1,\quad \mathrm{and}\quad \int x^4\,d\nu = 3, $$ and satisfying an improved Poincaré inequality with sharp constant $\frac{1}{\lambda_2(\nu)}$, it holds for some finite positive constant $C>0$:
$$W_1\left(\frac{1}{\sqrt{2}}\left(\chi_2 -1\right),\nu^*\right) \leq C \left[\sqrt{\left|2-\lambda_2(\nu)\right|} +\frac{\left|2-\lambda_2(\nu)\right|}{\sqrt{\lambda_1(\nu)}} + C_\nu\, d\left(\frac{1}{\sqrt{2}}\left(x^2-1\right),Sp_1(\nu)^\perp\right) \right]$$
where $\chi_2$ is the $\chi_2$-distribution on $\mathbb{R}_+$, $\nu^*$ is the pushforward of $\nu$ by $f_2=\frac{1}{\sqrt{2}}\left(x^2-1\right)$, $W_1$ is the $1$-Wasserstein distance, the constant $C_\nu$ is given by $C_\nu=\sqrt{\lambda_2(\nu)-\lambda_1(\nu)} +\frac{\lambda_2(\nu)-\lambda_1(\nu)}{\sqrt{\lambda_1(\nu)}}$, and $d\left(\frac{1}{\sqrt{2}}\left(x^2-1\right),Sp_1(\nu)^\perp\right)$ quantifies the orthogonality error between $\frac{1}{\sqrt{2}}\left(x^2-1\right)$ and the first eigenspace of $\nu$.
\end{thm}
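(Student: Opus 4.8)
The plan is to specialize Theorem \ref{stabund} to the Ornstein-Uhlenbeck setting with $k=2$, and then upgrade the ``split'' estimate (a sum over connected components of $M\setminus\mathrm{Crit}(f_2)$) to a genuine $W_1$ bound for the global pushforward, exploiting the extra structure that $\Gamma(f_2)$ factorizes globally. First I would record the concrete data: $M=\mathbb{R}$, $Lf=f''-xf'$, $\mu=\gamma$, $\Gamma(f,g)=f'g'$, $\lambda_2(\gamma)=2$, and $f_2(x)=\tfrac{1}{\sqrt 2}(x^2-1)$ as the normalized second Hermite polynomial. The normalization conditions \eqref{normalisation}, namely $\int f_2\,d\nu=0$, $\int f_2^2\,d\nu=1$, $\int\Gamma(f_2)\,d\nu\le 2$, translate directly into $\int x^2\,d\nu=1$ and $\int x^4\,d\nu=3$ (using $\Gamma(f_2)(x)=2x^2$ and $\int x^2\,d\nu=1$ to absorb the third condition), which is exactly the hypothesis of the theorem. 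The improved Poincaré inequality \eqref{poicareimproved} with $k=2$ is the assumed hypothesis, with sharp constant $1/\lambda_2(\nu)$.

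Next I would invoke the approximate integration by parts formula, Corollary \ref{meilleureeqippapproxavecf2}, and then the refinement \eqref{steinpret} that replaces $\lambda_k(\nu)$ by $\lambda_k(\mu)=2$ at the cost of the term $|2-\lambda_2(\nu)|/\sqrt{\lambda_1(\nu)}$. The key observation, already made in the paragraph preceding the statement, is that although $f_2$ has a critical point at $0$ and is not injective, one has the global factorization $\Gamma(f_2)(x)=2x^2=2\sqrt 2\bigl(f_2(x)+\tfrac{1}{\sqrt 2}\bigr)=h(f_2(x))$ with $h(t)=2\sqrt 2\,(t+\tfrac{1}{\sqrt 2})$ on $I_2=[-\tfrac{1}{\sqrt 2},\infty)$. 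Therefore, instead of splitting the pushforward over $\mathbb{R}_-$ and $\mathbb{R}_+$ as in Section \ref{splitpushforward}, I can take the single global pushforward by $f_2$ of \eqref{steinpret}: choosing test functions $g=\phi\circ f_2$ with $\phi:I_2\to\mathbb{R}$, the left side becomes $\bigl|\int_{I_2}\bigl(2t\,\phi(t)-h(t)\phi'(t)\bigr)\,d\nu^*(t)\bigr|$ where $\nu^*=f_2^{\#}\nu$, and $\int\Gamma(\phi\circ f_2)\,d\nu=\int_{I_2}h(\phi')^2\,d\nu^*$.

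Then I would run Stein's method on the single interval $I_2=(-\tfrac{1}{\sqrt 2},+\infty)$ for the target $f_2^{\#}\gamma=\tfrac{1}{\sqrt 2}(\chi_2-1)$, which is invariant for the diffusion with generator $\psi\mapsto h\psi''-2t\psi'$; a direct integration by parts (exactly as in Section \ref{steindecoupe}) identifies its density and shows $h\psi'-2t\psi$ is the Stein operator, with the explicit solution of the form \eqref{steinsol}. For a $1$-Lipschitz $g$, Proposition \ref{gammasteinbound} gives $\|\sqrt h\,\psi'\|_\infty\le C_h\|g'\|_\infty\le C_h$, so plugging $\phi=\psi$ into the pushforwarded \eqref{steinpret} yields $\bigl|\nu^*(g)-\mu^*(g)\bigr|\le C_h\bigl[\sqrt{|2-\lambda_2(\nu)|}+|2-\lambda_2(\nu)|/\sqrt{\lambda_1(\nu)}+C_\nu\,d(f_2,Sp_1(\nu)^\perp)\bigr]$, and taking the supremum over $1$-Lipschitz $g$ turns the left side into $W_1(\nu^*,\tfrac{1}{\sqrt 2}(\chi_2-1))$ by the Kantorovich–Rubinstein duality. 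The finiteness of $C=C_h$ is the content of the last checkable point: since $h$ is affine, the vanishing condition $c_1(t-a)^2\le h(t)\le c_2(t-a)$ at $a=-\tfrac{1}{\sqrt 2}$ holds trivially, and at $b=+\infty$ the growth condition $c_1 t^{2\alpha-2}\le h(t)\le c_2 t^\alpha$ holds with $\alpha=1$, so Proposition \ref{resumechfini} applies. The main obstacle — really the only nontrivial point beyond bookkeeping — is justifying that the global pushforward is legitimate here despite non-injectivity of $f_2$; this is precisely where the global factorization $\Gamma(f_2)=h\circ f_2$ is used, since it lets the change of variables $x\mapsto f_2(x)$ in $\int\bigl(2f_2\,\phi(f_2)-\Gamma(f_2)\phi'(f_2)\bigr)\,d\nu$ collapse cleanly onto $\nu^*$ without needing to separate the two branches, after which the single-interval Stein machinery of the preceding sections goes through verbatim.
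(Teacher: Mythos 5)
Your proposal is correct and follows essentially the same route as the paper: it exploits the global factorization $\Gamma(f_2)=2\sqrt{2}\left(f_2+\tfrac{1}{\sqrt 2}\right)$ to push the approximate integration by parts formula forward by $f_2$ on all of $\mathbb{R}$ at once, runs Stein's method on the single interval $\left(-\tfrac{1}{\sqrt 2},+\infty\right)$ with the operator $h\psi'-2t\psi$, and checks finiteness of the Stein constant via Proposition \ref{resumechfini} with $\alpha=1$. The translation of the normalization conditions into the second and fourth moment constraints and the use of Kantorovich--Rubinstein duality are exactly as in the paper, so no gap remains.
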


\subsection{The $k$-th eigenvalue, $k\geq 3$}

As soon as $k\geq 3$, the global factorization $\Gamma(f_k)=h\circ f_k$ does not hold anymore. We are therefore led to use the method explained in Section $\ref{splitpushforward}$. Since 
\begin{equation}\label{derivehermite}
\forall n\geq 0,\,\,H_{n+1}'=\sqrt{n+1}\, H_n\,,
\end{equation} one gets that the critical points are $\mathcal{C}_k= \{ H_{k-1} = 0\} $. So the connected components of $\mathbb{R}\setminus\mathcal{C}_k$ are the $k$ nodal sets of $H_{k-1}$. The eigenfunction $H_k$ is injective on each of the connected component, $J_j$, so $\Gamma(H_k)$ factorizes as $\Gamma(H_k)=h_j\circ H_k$. Since $\Gamma(H_k)=(H_k')^2=k\,H_{k-1}^2$ we get $$h_j = k\,H_{k-1}^2\circ (H_k)_{|J_j}^{-1}. $$  Let us show that these functions $h_j$ satisfy Assumption $\ref{hjcommeilfaut}$. At an infinite boundary, since $H_k(x) \sim \frac{x^k}{\sqrt{k!}}$, we get $ h_j(t)\sim_C t^{2\frac{k-1}{k}}$, where $f\sim_C g$ means that $\frac{f}{g}$ tends to a constant. Therefore $\alpha=2\frac{k-1}{k}$ is a suitable choice in Proposition $\ref{resumechfini}$. Let us now treat the case of a finite boundary. We start by showing the following fact for Hermite polynomials:
\begin{fact}
For all $n\geq 0$, if $x_0\in \{\inf J_j,\sup J_j\}$ and $y_0:=H_{n+1}(x_0)$, then for $y\in I_k^j $ close enough to $y_0$, one has $$\left|(H_{n+1})_{|J_j}^{-1}(y)-y_0 \right|\leq c \sqrt{y-y_0}, $$ for some $c>0$.
\end{fact}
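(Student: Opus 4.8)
The plan is to reduce the claim to a second-order Taylor expansion of $H_{n+1}$ at $x_0$, using that $x_0$ is a \emph{nondegenerate} critical point of $H_{n+1}$. First recall from $(\ref{derivehermite})$ that $H_{n+1}' = \sqrt{n+1}\,H_n$. The endpoints of the nodal interval $J_j$ lie in $\mathcal{C}_k=\{H_{k-1}=0\}$; since the Fact is applied with $n+1=k$, such an $x_0$ is a zero of $H_n$, so $H_{n+1}'(x_0)=\sqrt{n+1}\,H_n(x_0)=0$. The one preliminary point requiring a word of justification is that this critical point is nondegenerate, i.e. $H_n'(x_0)\neq 0$. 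This follows because $H_n$ solves the Hermite ODE $u''-xu'+nu=0$: if we had $H_n(x_0)=H_n'(x_0)=0$, uniqueness for this linear second-order ODE would force $H_n\equiv 0$, which is absurd. (Equivalently, one may invoke the classical fact that orthogonal polynomials have only simple real zeros.)

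Differentiating once more gives $H_{n+1}''(x_0)=\sqrt{n+1}\,H_n'(x_0)=:2K_0\neq 0$. Taylor expanding at $x_0$ and using $H_{n+1}'(x_0)=0$,
\[
H_{n+1}(x)-y_0 = K_0(x-x_0)^2 + o\big((x-x_0)^2\big),\qquad x\to x_0,
\]
where $y_0=H_{n+1}(x_0)$. Hence there is $\delta>0$ such that $|H_{n+1}(x)-y_0|\ge \frac{|K_0|}{2}(x-x_0)^2$ whenever $|x-x_0|<\delta$. Moreover, since $H_{n+1}'=\sqrt{n+1}\,H_n$ has constant sign on the nodal interval $J_j$, the restriction $(H_{n+1})_{|J_j}$ is strictly monotone, so $(H_{n+1})_{|J_j}^{-1}$ is well-defined and continuous on $I_k^j$, satisfies $(H_{n+1})_{|J_j}^{-1}(y)\to x_0$ as $y\to y_0$, and $y-y_0$ keeps a constant sign near $y_0$ in $I_k^j$.

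To conclude, take $y\in I_k^j$ close enough to $y_0$ that $x:=(H_{n+1})_{|J_j}^{-1}(y)$ satisfies $|x-x_0|<\delta$; then $|y-y_0|=|H_{n+1}(x)-y_0|\ge \frac{|K_0|}{2}|x-x_0|^2$, whence
\[
\big|(H_{n+1})_{|J_j}^{-1}(y)-x_0\big| = |x-x_0| \le \sqrt{2/|K_0|}\;\sqrt{|y-y_0|},
\]
which is the announced estimate with $c=\sqrt{2/|K_0|}$ (the left-hand side being the distance of $(H_{n+1})_{|J_j}^{-1}(y)$ to $x_0=(H_{n+1})_{|J_j}^{-1}(y_0)$, and $\sqrt{y-y_0}$ read as $\sqrt{|y-y_0|}$, or the orientation of $I_k^j$ fixed so that $y-y_0\ge 0$). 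I do not expect any real obstacle: the substance is the elementary fact that a $C^2$ function with a nondegenerate critical value has a square-root modulus of continuity for its local inverse near that value, and the only input specific to Hermite polynomials is the simplicity of the zeros of $H_n$, obtained for free from the Hermite ODE.
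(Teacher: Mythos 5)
Your argument is essentially the paper's: reduce the estimate to a second-order Taylor expansion at $x_0$ and check that the critical point is nondegenerate, i.e. $H_{n+1}''(x_0)=\sqrt{n+1}\,H_n'(x_0)\neq 0$. The only (cosmetic) difference is that you justify $H_n'(x_0)\neq 0$ via the Hermite ODE (simplicity of zeros), while the paper invokes the equivalent fact that two consecutive Hermite polynomials have no common root; your write-up is also somewhat more careful about the inverse-function and absolute-value details, which is fine.
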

\begin{dem}
Since $\inf J_j$ and $\sup J_j$ are critical points of $H_{n+1}$, this fact is equivalent to the fact that $H_{n+1}$ is quadratic at the neighborhood of all of its critical points, i.e. there is some non zero $c$ such that $H_{n+1}(x)-H_{n+1}(x_0) = c(x-x_0)^2 + o(x-x_0)^2$ for all critical points $x_0$. To show this we are reduced to check that $H_{n+1}''(x_0)\neq 0$. But using again formula ($\ref{derivehermite}$), this is true because two consecutive Hermite polynomials never have a common root.
\end{dem}

Then, since $\Gamma(H_k)=k\,H_{k-1}^2$ and since the roots of Hermite polynomials have only multiplicity one, we deduce that at all finite boundaries of $I_k^j$, the function $h_j$ vanishes at a linear rate.
Finally the requirement of Proposition $\ref{resumechfini}$ is satisfied, so that Assumption $\ref{hjcommeilfaut}$ is satisfied.
Therefore in this case the normalization conditions ($\ref{normalisation}$) take the form $$\int H_k\,d\nu=0, \quad \int H_k^2d\nu=1\,\,\mathrm{and}\,\, \int H_{k-1}^2d\nu=1. $$ Unlike what happened for the first and the second eigenfunctions, these three conditions can not be reduced to two. For example, for $k=3$, they are: $$\int (x^3-3x)\,d\nu=0,\quad \int (x^6-6x^4+9x^2)\,d\nu=6\,\, \mathrm{and}\,\, \int (x^4-2x^2)\,d\nu=1. $$

The requirements of Theorem $\ref{stabund}$ being satisfied, we can then apply it and get the following stability result for higher order eigenvalues of the one dimensional normal distribution.

\begin{thm}
Let $k\geq 3$ and $H_k$ $($resp. $H_{k-1})$ be the $k$-th $($resp. $(k-1)$-th$)$ Hermite polynomial.
Then for all probability measures $\nu$ on $M$ normalized as $$\int H_k\,d\nu=0, \quad \int H_k^2d\nu=1\,\,\mathrm{and}\,\, \int H_{k-1}^2d\nu=1, $$
and satisfying the improved Poincaré inequalities $\eqref{poicareimproved}$, it holds for some finite constant $C>0$:
$$ \sum_j \nu(J_j)\, W_1(\nu_j^*,\gamma_j^*) \leq C \left[\sqrt{\left|k-\lambda_k(\nu)\right|} +\frac{ \left|k-\lambda_k(\nu)\right|}{\sqrt{\lambda_1(\nu)}} + \sum_{i=1}^{k-1}C_i\, d(H_k,Sp_i(\nu)^\perp)\right]  $$
where $(J_j)_j$ are the connected components of the complementary of critical points of $H_k$, $\nu_j^*$ (resp. $\gamma_j^*$) is the pushforward of $\nu$ (resp. $\gamma$) restricted to $H_k(J_j)$, constants $C_i$ are given by $$C_i=\sqrt{\lambda_k(\nu)-\lambda_i(\nu)}+\frac{\lambda_k(\nu)-\lambda_i(\nu)}{\sqrt{\lambda_i(\nu)}},$$ and $d(H_k,Sp_i(\nu)^\perp)$ is defined in Remark $\ref{orthoerror}$ and quantifies the orthogonality error between $H_k$ and eigenspaces of lower orders of $\nu$.
\end{thm}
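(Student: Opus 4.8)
The plan is to verify the hypotheses of Theorem~\ref{stabund} for the one-dimensional Ornstein--Uhlenbeck operator and the $k$-th Hermite eigenfunction, and then simply invoke it. The only genuine work is checking Assumption~\ref{hjcommeilfaut}, i.e.\ that each local factor $h_j$ of $\Gamma(H_k)$ on a nodal interval $J_j$ of $H_{k-1}$ satisfies the growth/vanishing conditions of Proposition~\ref{resumechfini} at each of its two endpoints. Recall $\Gamma(H_k)=(H_k')^2=k\,H_{k-1}^2$ by $(\ref{derivehermite})$, so on $J_j$ one sets $h_j=k\,H_{k-1}^2\circ(H_k)_{|J_j}^{-1}$. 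The eigenfunction equation, its normalization, and the improved Poincaré inequality $(\ref{poicareimproved})$ are either automatic (the latter holds for \emph{every} $\nu$ by construction) or encode the three moment normalizations $\int H_k\,d\nu=0$, $\int H_k^2\,d\nu=1$, $\int H_{k-1}^2\,d\nu=1$ via the identity $\int\Gamma(H_k)\,d\nu=k\int H_{k-1}^2\,d\nu$.

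First I would handle the endpoints that are $\pm\infty$: for the extreme nodal intervals, the endpoint at infinity is reached along a branch where $H_k(x)\sim x^k/\sqrt{k!}$ and $H_{k-1}(x)\sim x^{k-1}/\sqrt{(k-1)!}$, so inverting gives $(H_k)_{|J_j}^{-1}(t)\sim_C t^{1/k}$ and hence $h_j(t)\sim_C t^{2(k-1)/k}$. Since $2(k-1)/k<2$, the choice $\alpha=2(k-1)/k$ makes both the lower bound $c_1t^{2\alpha-2}\le h_j(t)$ and the upper bound $h_j(t)\le c_2 t^{\alpha}$ of Proposition~\ref{resumechfini} hold near the infinite endpoint; the same works at $-\infty$. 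Second I would handle the \emph{finite} endpoints, which are exactly the critical points $x_0$ of $H_k$, i.e.\ the roots of $H_{k-1}$. The key local fact is that $H_k$ is nondegenerate-quadratic there: $H_k''(x_0)=\sqrt{k}\,H_{k-1}'(x_0)\neq 0$ because two consecutive Hermite polynomials share no root (a standard consequence of the three-term recurrence together with $(\ref{derivehermite})$). This gives $|(H_k)_{|J_j}^{-1}(t)-x_0|\asymp\sqrt{t-y_0}$ near $y_0=H_k(x_0)$, while $H_{k-1}$ vanishes to \emph{exactly} first order at $x_0$ (simple roots again), so $H_{k-1}^2$ vanishes quadratically in $x-x_0$ and therefore $h_j(t)=k\,H_{k-1}^2\circ(H_k)_{|J_j}^{-1}(t)$ vanishes \emph{linearly} in $t-y_0$; thus $c_1(t-a_j)^2\le h_j(t)\le c_2(t-a_j)$ near a finite left endpoint $a_j$, and symmetrically at a finite right endpoint. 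This is precisely the finite-boundary condition of Proposition~\ref{resumechfini}.

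Having checked Assumption~\ref{hjcommeilfaut}, all hypotheses of Theorem~\ref{stabund} are met with $L$ the OU generator, $\mu=\gamma$, $f_k=H_k$, $\lambda_k(\mu)=k$, and $\lambda_1(\nu)$ the spectral gap of $\nu$; substituting these into the conclusion of Theorem~\ref{stabund} yields the displayed inequality, with $(J_j)_j$ the $k$ nodal intervals of $H_{k-1}$, $\nu_j^*,\gamma_j^*$ the normalized pushforwards of $\nu,\gamma$ restricted to $J_j$ by $H_k$, the constants $C_i$ as in $(\ref{cnu})$, and $C=\sum_j C_{h_j}^2$ finite by Proposition~\ref{resumechfini}. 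The main obstacle is the second step: one must be careful that the finite endpoints of the \emph{image} intervals $I_k^j=H_k(J_j)$ correspond to the genuinely quadratic behaviour of $H_k$ (nonvanishing second derivative) and to simple roots of $H_{k-1}$ simultaneously; both follow from the single arithmetic fact that consecutive Hermite polynomials have no common zero, but one should also note that distinct nodal intervals may share an image endpoint, so the supports of the $\nu_j^*$ overlap — this is harmless here since Theorem~\ref{stabund} is stated precisely in this split form and does not require disjointness.
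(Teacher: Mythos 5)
Your proposal is correct and follows essentially the same route as the paper: check Assumption~\ref{hjcommeilfaut} by taking $\alpha=2(k-1)/k$ at the infinite endpoints (from $H_k(x)\sim x^k/\sqrt{k!}$) and, at the finite endpoints, use that consecutive Hermite polynomials have no common zero so that $H_k$ is nondegenerate-quadratic at its critical points and $H_{k-1}$ has simple roots, giving linear vanishing of $h_j$; then invoke Theorem~\ref{stabund} with $\mu=\gamma$, $f_k=H_k$, $\lambda_k(\mu)=k$. Your explicit verification of both bounds in Proposition~\ref{resumechfini} and the remark on overlapping images $I_k^j$ are consistent with (indeed slightly more detailed than) the paper's argument.
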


\section{Application to Gamma distributions}\label{higherlaguerre}

In this section, we consider the case of the Laguerre operator, where $M=\mathbb{R_+}$, $Lf=xf''+(s-\frac{x}{\theta})f'$ and $\mu$ is the $\Gamma(s,\theta)$ distribution on $\mathbb{R}_+$ given by the density $d\mu(x)=\frac{x^{s-1}e^{-\frac{x}{\theta}}}{\Gamma(s)\theta^s}\textbf{1}_{\mathbb{R}_+}$, where $\Gamma$ denotes the Euler $\Gamma$ function. The carré du champ operator is $\Gamma(f,g)=x\,f'\,g'$. The eigenvalues are $\lambda_k(\mu)=\frac{k}{\theta}$, with multiplicity $1$, and the associated normalized eigenfunctions are the generalized normalized Laguerre polynomials $L_{k,s}$ given by  
\begin{equation}\label{deflaguerre}
L_{k,s}(x)=\sqrt{\frac{k!\,\Gamma(s)}{\Gamma(k+s)}}\,l_{k,s}\left(\frac{x}{\theta}\right)
\end{equation}
where $l_{0,s}(x)=1$, $l_{1,s}(x)=s-x$, and $$\forall n\geq1,\quad l_{n+1,s}(x)= \left( 2+\frac{s-2-x}{n+1}\right)l_{n,s}(x) - \left(1+\frac{s-2}{n+1}\right)l_{n-1,s}(x). $$

Moreover, 
\begin{equation}\label{derivedeslaguerres}
L_{n,s}'(x)=\frac{-1}{\theta}\sqrt{\frac{n}{s}}\,L_{n-1,s+1}(x). 
\end{equation}
So the critical points $\mathcal{C}_k=\{x\in\mathbb{R}_+\,|\,\Gamma(L_{k,s})(x)=0\}$ of $L_{k,s}$ are the zeros of $L_{k-1,s+1}$ and $0$. This means that there are $k$ connected components $J_j$ of $\mathbb{R}_+\setminus\mathcal{C}_k$, which are the nodal sets of $L_{k-1,s+1}$.

The eigenfunction $L_{k,s}$ is injective on each connected component $J_j$, so $\Gamma(L_{k,s})$ factorizes as $\Gamma(L_{k,s})=h_j\circ L_{k,s}$. Since $\Gamma(L_{k,s})(x)=x\,(L_{k,s}'(x))^2= \frac{kx}{s\theta^2}(L_{n-1,s+1}(x))^2 $ we get 
\begin{equation}\label{hjlaguerre}
h_j(t) = \frac{k}{s\theta^2}(L_{k,s})_{|J_j}^{-1}(t)(L_{n-1,s+1}\circ (L_{k,s})_{|J_j}^{-1}(t))^2.
\end{equation}
Let us show that these functions $h_j$ satisfy Assumption $\ref{hjcommeilfaut}$. At $+\infty	$, since $L_{k,s}(x)\sim \frac{1}{\theta^k}\sqrt{\frac{n!\,\Gamma(s)}{\Gamma(n+s)}}\,x^k$, we get $$h_j(t)\sim_C t^{\frac{1}{k}+\frac{2(k-1)}{k}} = t^{2-\frac{1}{k}}. $$ Therefore $\alpha=2-\frac{1}{k}$ is a suitable choice for $\alpha$ in Proposition $\ref{resumechfini}$. Let us treat now the case of a finite boundary. In the same way as for Hermite polynomials, we can see the following.
\begin{fact}
Let $n\geq 0$, let $x_0\in \{\inf J_j,\sup J_j\}$ such that $L_{n,s}'(x_0)=0$, and set $y_0:=L_{n,s}(x_0)$. Then for $y\in I_k^j $ close enough to $y_0$, one has $$\left|(L_{n,s})_{|J_j}^{-1}(y)-y_0 \right|\leq c\sqrt{y-y_0}, $$ for some $c>0$.
\end{fact}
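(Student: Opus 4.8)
The plan is to mimic the argument already carried out for Hermite polynomials in the previous section, replacing $H_{n+1}$ by $L_{n,s}$ throughout. The claimed local square-root bound on the inverse branch of $L_{n,s}$ near a critical endpoint $x_0$ of $J_j$ is equivalent, exactly as before, to the statement that $L_{n,s}$ vanishes at $x_0$ to order precisely $2$: if $L_{n,s}(x)-L_{n,s}(x_0)=c\,(x-x_0)^2+o((x-x_0)^2)$ with $c\neq 0$, then inverting gives $|(L_{n,s})_{|J_j}^{-1}(y)-y_0|\sim \sqrt{|y-y_0|/|c|}$ for $y$ close to $y_0$, which is the asserted inequality (after possibly shrinking the neighbourhood and enlarging $c$). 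So the whole content reduces to showing $L_{n,s}''(x_0)\neq 0$ whenever $L_{n,s}'(x_0)=0$.

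First I would use the differentiation identity $(\ref{derivedeslaguerres})$, which gives $L_{n,s}'=-\tfrac{1}{\theta}\sqrt{n/s}\,L_{n-1,s+1}$ and hence $L_{n,s}''=\tfrac{1}{\theta^2}\sqrt{\tfrac{n}{s}}\sqrt{\tfrac{n-1}{s+1}}\,L_{n-2,s+2}$ (after applying the same identity once more, with $(n,s)$ replaced by $(n-1,s+1)$). Thus the condition $L_{n,s}'(x_0)=0$ means $L_{n-1,s+1}(x_0)=0$, and we must rule out $L_{n-2,s+2}(x_0)=0$ simultaneously. This is now a purely algebraic statement about (generalized) Laguerre polynomials: $L_{n-1,s+1}$ and $L_{n-2,s+2}$ have no common root. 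To see this, note that $L_{n-1,s+1}$ and $L_{n-2,s+1}$ are consecutive orthogonal polynomials for the same weight (the $\Gamma(s+1,\theta)$ density), hence by the standard three-term recurrence and the usual interlacing-of-zeros argument they share no root; and $L_{n-2,s+2}$ differs from $L_{n-2,s+1}$ by a nonzero multiple of $L_{n-3,s+1}$ via a contiguous relation for Laguerre polynomials, so a common root of $L_{n-1,s+1}$ and $L_{n-2,s+2}$ would force a common root of $L_{n-1,s+1}$ and $L_{n-3,s+1}$, again impossible for two polynomials in the same orthogonal family of different degrees (their roots strictly interlace, so in particular they are disjoint). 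Alternatively, and perhaps more cleanly, one invokes the fact that $L_{n,s}$ solves the Laguerre ODE $x y''+(s-\tfrac{x}{\theta})y'+\tfrac{n}{\theta}y=0$: evaluating at a point $x_0>0$ where $y=L_{n,s}$ and $y'=L_{n,s}'$ both vanish forces $x_0\,L_{n,s}''(x_0)=0$, hence $L_{n,s}''(x_0)=0$, hence by induction on the order of vanishing $L_{n,s}\equiv 0$, a contradiction; so at an interior critical point $L_{n,s}''(x_0)\neq 0$ automatically, and this even handles every critical point in $(0,\infty)$ at once.

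Once the quadratic (order $2$) vanishing of $L_{k-1,s+1}$ at each finite endpoint of $I_k^j$ is established — together with the fact, from $(\ref{hjlaguerre})$, that $h_j(t)$ is a nonzero multiple of $(f_k)^{-1}_{|J_j}(t)\cdot\big(L_{k-1,s+1}\circ (f_k)^{-1}_{|J_j}(t)\big)^2$ and that $(f_k)^{-1}_{|J_j}$ itself is smooth with nonvanishing derivative at an interior endpoint — one reads off that $h_j$ vanishes at a linear rate at each finite boundary point of $I_k^j$ except possibly at the endpoint coming from $0\in\mathcal{C}_k$, where the extra factor $(f_k)^{-1}_{|J_j}(t)\to 0$ produces an additional linear vanishing; in all cases one gets bounds of the form $c_1(t-a_j)^2\le h_j(t)\le c_2(t-a_j)$ (resp. with $b_j-t$), matching the finite-boundary hypotheses of Proposition $\ref{resumechfini}$. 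Combined with the already-checked growth rate $h_j(t)\sim_C t^{2-1/k}$ at $+\infty$ (with $\alpha=2-1/k\le 2$), this verifies Assumption $\ref{hjcommeilfaut}$. The main obstacle is the no-common-root claim for $L_{n-1,s+1}$ and $L_{n-2,s+2}$; I expect the ODE argument above to dispatch it in one line, but writing the statement as a Fact about the inverse branch (as the paper does) is just the bookkeeping of translating "order-$2$ zero" into "square-root modulus of the inverse," which is routine.
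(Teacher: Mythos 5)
Your reduction is exactly the paper's: translate the square-root bound on the inverse branch into "$L_{n,s}$ vanishes to order exactly two at its critical points", i.e.\ $L_{n,s}''(x_0)\neq 0$ whenever $L_{n,s}'(x_0)=0$. Where you diverge is in proving this nondegeneracy. The paper does it in one step: by $(\ref{derivedeslaguerres})$, $L_{n,s}'$ is a nonzero multiple of $L_{n-1,s+1}$, so $L_{n,s}'(x_0)=L_{n,s}''(x_0)=0$ would make $x_0$ a root of $L_{n-1,s+1}$ of multiplicity at least two, contradicting the simplicity of the zeros of Laguerre polynomials; no second-derivative identity and no contiguous relations are needed. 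Your first route (writing $L_{n,s}''$ as a multiple of $L_{n-2,s+2}$ and excluding a common zero of $L_{n-1,s+1}$ and $L_{n-2,s+2}$) is not justified as written: the contiguous relation you invoke ("$L_{n-2,s+2}$ differs from $L_{n-2,s+1}$ by a multiple of $L_{n-3,s+1}$") is not a standard identity, and even granting some such relation, a common zero of $L_{n-1,s+1}$ and $L_{n-2,s+2}$ does not obviously force a common zero of $L_{n-1,s+1}$ and $L_{n-3,s+1}$; the correct contiguity to use here would be $x\,L_{n-2,s+2}(x)=c_1 L_{n-2,s+1}(x)-c_2 L_{n-1,s+1}(x)$, which reduces the claim to the no-common-root property of consecutive orthogonal polynomials. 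Your alternative ODE argument, however, is sound and self-contained, with one step left implicit: the Laguerre equation at $x_0>0$ with $L_{n,s}'(x_0)=0$ gives $x_0 L_{n,s}''(x_0)=-\tfrac{n}{\theta}L_{n,s}(x_0)$, so you must also rule out $L_{n,s}(x_0)=0$ at a critical point, which is exactly the "no simultaneous zero of $y$ and $y'$" contradiction you derive; spelling out this second use of the ODE closes the argument. So your proposal is correct modulo dropping (or repairing) the contiguous-relation detour; the paper's version is shorter because it only ever uses the simplicity of the zeros of $L_{n-1,s+1}$, while your ODE route has the mild virtue of not quoting that simplicity as an external fact. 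The closing paragraph on the vanishing rate of $h_j$ reproduces the surrounding discussion in the paper and is not part of the Fact itself.
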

\begin{dem}
This fact is equivalent to the fact that $L_{n,s}$ is quadratic at the neighborhood of all of its critical points, i.e. there is some non zero $c$ such that $L_{n,s}(x)-L_{n,s}(x_0) = c(x-x_0)^2 + o(x-x_0)^2$ for all critical points $x_0$. To show this we are reduced to check that $L_{n,s}''(x_0)\neq 0$. But using Formula ($\ref{derivedeslaguerres}$), this  would imply that $x_0$ is a root of $L_{n-1,s+1}$ with multiplicity at least two. However, all Laguerre polynomials only have roots with multiplicity one. So the fact is proven.
\end{dem}

Using the above fact and Formulas ($\ref{derivedeslaguerres}$) and ($\ref{hjlaguerre}$), we get that at avery finite boundary of $I_k^j$, the function $h_j$ vanishes at a linear rate.
Hence Assumption $\ref{hjcommeilfaut}$ is satisfied.

Moreover in this case the normalization conditions ($\ref{normalisation}$) takes the form $$\int L_{k,s}\,d\nu=0, \quad \int L_{k,s}^2d\nu=1\,\,\mathrm{and}\,\, \int x\,(L_{k-1,s+1}(x))^2d\nu= s\theta, $$ where $\Gamma$ is the Euler Gamma function, and $!$ denotes the factorial. For $k=1$ these three conditions reduced to only two, but as soon as $k\geq 2$ it is not the case anymore. For example, if $k=2$, and $s=\theta=1$ (in that case $\mu$ is the exponential distribution), then these three conditions are: $$\int \frac{x^2}{2} -2x\,d\mu =1, \quad \int \frac{x^4}{4}-2x^3+5x^2-4x\,d\mu=0,\quad \mathrm{and}\,\, \int x^3-4x^2+4x\,d\mu = 2.$$

The requirements of Theorem $\ref{stabund}$ being satisfied, we can then apply it and get the following stability result for higher order eigenvalues of the Gamma distributions $\Gamma(s,\theta)$ on $\mathbb{R}_+$.

\begin{thm}\label{stablaguerrehigh}
Let $k\geq 1$, $s>0$, $\theta>0$, and $L_{k,s}$ be the Laguerre polynomials defined in $\eqref{deflaguerre}$.
Then for all probability measures $\nu$ on $\mathbb{R}_+$ normalized with $$\int L_{k,s}\,d\nu=0, \quad \int L_{k,s}^2d\nu=1\,\,\mathrm{and}\,\, \int x\,(L_{k-1,s+1}(x))^2d\nu= s\theta, $$ where $\Gamma$ is the Euler Gamma function, and satisfying the improved Poincaré inequalities $\eqref{poicareimproved}$, it holds for some finite constant $C>0$:

$$ \sum_j \nu_j(I_k^j)\, W_1(\nu_j^*,\mu_j^*) \leq C \left[\sqrt{\left|\frac{k}{\theta}-\lambda_k(\nu)\right|} +\frac{ \left|\frac{k}{\theta}-\lambda_k(\nu)\right|}{\sqrt{\lambda_1(\nu)}} + \sum_{i=1}^{k-1}C_i\, d(L_{k,s},Sp_i(\nu)^\perp)\right]  $$
where $(I_k^j)_j$ are the images by $L_{k,s}$ of the connected components of the complementary of its critical points, $\nu_j^*$ (resp. $\mu_j^*$) is the pushforward of $\nu$ (resp. $\mu$) restricted to $I_k^j$, constants $C_i$ are given by $$C_i=\sqrt{\lambda_k(\nu)-\lambda_i(\nu)}+\frac{\lambda_k(\nu)-\lambda_i(\nu)}{\sqrt{\lambda_i(\nu)}},$$ and $d(L_{k,s},Sp_i(\nu)^\perp)$ is defined in Remark $\ref{orthoerror}$.
\end{thm}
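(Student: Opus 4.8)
The plan is to verify that this theorem is a direct specialization of Theorem \ref{stabund} to the Laguerre operator, so the only real work is checking the hypotheses of that general theorem and translating the abstract quantities into explicit ones. First I would recall the setup: $M=\mathbb{R}_+$, $Lf=xf''+(s-x/\theta)f'$, carré du champ $\Gamma(f,g)=xf'g'$, eigenvalues $\lambda_k(\mu)=k/\theta$ with simple spectrum, and normalized eigenfunctions $L_{k,s}$ given by \eqref{deflaguerre}. The hypotheses of Theorem \ref{stabund} are: (i) $L$ is a one-dimensional diffusion on an interval with strictly increasing simple spectrum $0<\lambda_1(\mu)<\dots<\lambda_k(\mu)$ — true here since $\lambda_j(\mu)=j/\theta$; (ii) $f_k=L_{k,s}$ satisfies Assumption \ref{hjcommeilfaut}; (iii) $\nu$ is normalized as in \eqref{normalisation} and satisfies the improved Poincaré inequalities \eqref{poicareimproved}.

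The main step is verifying Assumption \ref{hjcommeilfaut}, i.e. that every local factorization function $h_j$ defined in \eqref{hjlaguerre} meets the growth/vanishing requirements of Proposition \ref{resumechfini}. Using \eqref{derivedeslaguerres}, the critical set is $\mathcal{C}_k=\{0\}\cup\{L_{k-1,s+1}=0\}$, which is finite (exactly $k-1$ interior roots plus the endpoint $0$), so $\mathbb{R}_+\setminus\mathcal{C}_k$ splits into $k$ intervals $J_j$ on each of which $L_{k,s}$ is strictly monotone, hence invertible, giving $\Gamma(L_{k,s})=h_j\circ L_{k,s}$ with $h_j$ as in \eqref{hjlaguerre}. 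At the infinite boundary I would use $L_{k,s}(x)\sim c\,x^k$ and $x(L_{k-1,s+1}(x))^2\sim c'\,x^{2k-1}$ to get $h_j(t)\sim_C t^{(2k-1)/k}=t^{2-1/k}$, so $\alpha=2-1/k\le 2$ works. At a finite boundary $x_0$ (either $0$ or an interior root of $L_{k-1,s+1}$), the cited Fact — proven via $L_{n,s}''(x_0)\neq 0$, which follows from the fact that consecutive Laguerre polynomials have no common root together with simplicity of roots — shows $L_{k,s}$ is quadratic near $x_0$, so $(L_{k,s})_{|J_j}^{-1}(t)-x_0\sim c\sqrt{t-y_0}$; feeding this into \eqref{hjlaguerre}, and noting $x(L_{k-1,s+1}(x))^2$ vanishes linearly in $(x-x_0)$ near any finite $x_0$ (again by simplicity of roots, and noting the factor $x$ is bounded below away from $0$ at an interior root, or contributes the linear vanishing itself at $x_0=0$), one gets $h_j(t)-0$ comparable to $\sqrt{t-y_0}$, i.e. $h_j$ vanishes at a linear rate in the variable $t$; this matches the finite-boundary alternatives $c_1(t-a_j)^2\le h_j(t)\le c_2(t-a_j)$ of Proposition \ref{resumechfini} once one checks both the upper and lower comparison (the lower bound holds because $h_j$ does not vanish faster than linearly, since $L_{k-1,s+1}$ has a simple zero).

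Once Assumption \ref{hjcommeilfaut} is confirmed, I would translate the normalization \eqref{normalisation} into explicit form: $\int f_k\,d\nu=0$ becomes $\int L_{k,s}\,d\nu=0$; $\int f_k^2\,d\nu=1$ becomes $\int L_{k,s}^2\,d\nu=1$; and $\int\Gamma(f_k)\,d\nu\le\lambda_k(\mu)$ becomes $\int x(L_{k,s}'(x))^2\,d\nu\le k/\theta$, which by \eqref{derivedeslaguerres} reads $\frac{k}{s\theta^2}\int x(L_{k-1,s+1}(x))^2\,d\nu\le \frac{k}{\theta}$, i.e. $\int x(L_{k-1,s+1}(x))^2\,d\nu\le s\theta$ — and since for $\mu$ itself this is an equality with value $s\theta$, one states it as the normalization $\int x(L_{k-1,s+1}(x))^2\,d\nu= s\theta$ (matching the displayed hypothesis). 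Then I would simply invoke Theorem \ref{stabund}: the left-hand side $\sum_j\nu_j(I_k^j)W_1(\nu_j^*,\mu_j^*)$ and the constants $C_i=\sqrt{\lambda_k(\nu)-\lambda_i(\nu)}+(\lambda_k(\nu)-\lambda_i(\nu))/\sqrt{\lambda_i(\nu)}$ and the finite constant $C=\sum_j C_{h_j}^2$ are exactly as in that theorem, with $\lambda_k(\mu)=k/\theta$ substituted. The statement follows with no further computation.

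The one point requiring genuine care — the expected main obstacle — is the finite-boundary analysis of $h_j$, and in particular getting both sides of the comparison $c_1(t-a_j)^2\le h_j(t)\le c_2(t-a_j)$ right at the endpoint $x_0=0$, where the factor $x$ in $\Gamma(L_{k,s})(x)=\frac{kx}{s\theta^2}(L_{k-1,s+1}(x))^2$ itself vanishes. At $x_0=0$ one must decide whether $0$ is a root of $L_{k-1,s+1}$; for the generalized Laguerre polynomials with parameter $s+1>0$ it is not (their roots are strictly positive), so near $x=0$ we have $x(L_{k-1,s+1}(x))^2\sim c\,x$, linear in $x$, while $x$ is itself (trivially) the relevant local coordinate and $L_{k,s}$ is quadratic in $x$ near $0$ since $L_{k,s}'(0)= -\frac1\theta\sqrt{k/s}\,L_{k-1,s+1}(0)\neq 0$ — wait, that is nonzero, so actually $x=0$ is generally NOT a critical point unless $k=1$; I would double-check whether $0\in\mathcal{C}_k$ as claimed, and if $0$ is a regular point of $f_k$ then it is simply the left endpoint of $M$ and the boundary behaviour of $h_j$ there is governed by $h_j(t)\sim c(t-L_{k,s}(0))$ linearly with $L_{k,s}$ a local diffeomorphism, which still fits Proposition \ref{resumechfini}'s finite-boundary case (with the lower bound holding since $h_j$ vanishes exactly linearly). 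Resolving this endpoint bookkeeping cleanly is the only subtle part; everything else is substitution into Theorem \ref{stabund}.
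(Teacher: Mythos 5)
Your overall route is the paper's route: specialize Theorem \ref{stabund}, check Assumption \ref{hjcommeilfaut} for the $h_j$ of \eqref{hjlaguerre} (growth $\alpha=2-\tfrac1k$ at $+\infty$, linear vanishing at finite boundaries via simplicity of Laguerre roots and the quadratic behaviour of $L_{k,s}$ at its critical points), and translate \eqref{normalisation} using \eqref{derivedeslaguerres}. Your treatment of the endpoint $x_0=0$ in the last paragraph is correct and even more explicit than the paper's: $0$ lies in $\mathcal{C}_k$ only through the paper's convention $\mathcal{C}_k=\{\Gamma(f_k)=0\}$ (the factor $a(x)=x$ vanishes there, not $L_{k,s}'$), $L_{k,s}$ is a local diffeomorphism near $0$, and $h_j$ vanishes linearly at $L_{k,s}(0)$.

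However, your finite-boundary analysis at the \emph{interior} critical points (the zeros of $L_{k-1,s+1}$) is garbled, and as written it does not verify Proposition \ref{resumechfini}. Near a simple interior zero $x_0>0$ of $L_{k-1,s+1}$, the quantity $x\,(L_{k-1,s+1}(x))^2$ vanishes \emph{quadratically} in $(x-x_0)$ (it is the square of a simple zero times a factor bounded away from $0$), not linearly as you claim; combined with the Fact, which gives $|x-x_0|\asymp\sqrt{|t-y_0|}$, this yields $h_j(t)\asymp|t-y_0|$, i.e.\ the linear rate the paper asserts, and this is what fits $c_1(t-a_j)^2\le h_j(t)\le c_2(t-a_j)$. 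Your chain instead combines ``linear in $(x-x_0)$'' with the square-root inverse and lands on $h_j(t)\asymp\sqrt{t-y_0}$, which you then call ``a linear rate''; it is not, and a genuine $\sqrt{t-a_j}$ behaviour would violate the required upper bound $h_j(t)\le c_2(t-a_j)$, so Assumption \ref{hjcommeilfaut} would fail under your stated rates. The conclusion you want is true, but the step establishing it must be redone as above (square of a simple zero $\Rightarrow$ quadratic in $x$, then the quadratic Fact $\Rightarrow$ linear in $t$); once that is corrected, the rest of your argument coincides with the paper's proof.
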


\section{Application to $\beta\left(\frac{N}{2},\frac{N}{2}\right)$ distributions}\label{highjacobi}

In this section, we consider the case of the Jacobi operator, where $M=[-1,1]$,
$$Lf(x) = (1-x^2)f''(x)-Nxf'(x), $$
and $\mu$ is the $\beta\left(\frac{N}{2},\frac{N}{2}\right)$ distribution on $[-1,1]$ given by the density $$d\mu(x)=\frac{1}{Z}(1-x^2)^{\frac{N}{2}-1}dx,$$ where $$Z=2^{2-2N}\pi\frac{\Gamma(d-1)}{\left(\frac{d-1}{2}\right)\left(\Gamma(\frac{d-1}{2}\right)^2}$$ is the normalization constant and $\Gamma$ denotes the Euler function. The carré du champ operator is given by $\Gamma(f,g)(x)=(1-x^2)^{\frac{N}{2}-1}f'(x)g'(x)$. The eigenvalues are $\lambda_k(\mu)=k(k+N-1)$, with multiplicity one, and the associated normalized eigenfunctions are the normalized Gegenbauer polynomials (see \cite{szegopolynomials}) given by
\begin{equation}\label{defgegenbauer}
G_{N,k}(x) = \begin{pmatrix}
k+N-2 \\
k
\end{pmatrix}^{-1}\frac{2k+N-1}{N-1}\, P_{N,k}(x),
\end{equation}
where $P_{N,0}(x)=1$, $P_{N,1}(x)=(N-1)x$, and
$$P_{N,k}(x)= \frac{2x}{k}\left(k+\frac{N-3}{2}\right)P_{N,k-1}(x) - \frac{1}{k}(k+N-3)P_{N,k-2}(x). $$
The Jacobi operator corresponds to the Laplace-Beltrami operator on the sphere $\mathbb{S}^N$ projected on one coordinate and normalized to stay in $[-1,1]$. The Gegenbauer polynomials are particular case of Jacobi polynomials, when the two parameters of Jacobi polynomials are equals.

\subsection{The second eigenvalue}

Similarly to the case of the Normal distribution, the global factorization condition of the carré du champ is satisfied for $k=1$ and $k=2$. Indeed, the second Gegenbauer polynomial is $$G_{N,2}(x)= \frac{1}{2}\begin{pmatrix}
N\\
2
\end{pmatrix}^{-1} (N+3)\left((N+1)x^2-1\right),$$ so we can compute
$$\Gamma(G_{N,2})(x) =K^2(1-x^2)x^2 =K^2\left[1-\frac{1}{N+1}\left(\frac{1}{K}G_{N,2}(x)+1\right)\right]\frac{1}{N+1}\left(\frac{1}{K}G_{N,2}(x)+1\right), $$ where $K=\begin{pmatrix}
N\\
2
\end{pmatrix}^{-1} (N+3)(N+1) $. Hence $\Gamma(G_{N,2})(x) = h(G_{N,2}(x))$, with $$h(t):= \frac{K}{N+1}\left(\frac{N}{N+1}-\frac{t}{K(N+1)}\right)\left(\frac{t}{K}+1\right). $$ As in the Gaussian case, this is due to the fact that the only critical point is zero, and the carré du champ is symetric. This $h$ satisfies the vanishing rate requirements (it vanish at linear speed), so we have the following stability result.
\begin{thm}
For all measure $\nu$ on $\mathbb{R}$ satisfying $$\int x^2d\nu=\frac{1}{N+1},\quad \mathrm{and}\quad \int x^4\,d\nu = \frac{1}{N+1}\left[4\begin{pmatrix}
N\\
2
\end{pmatrix}^2(N+3)^{-2} +1\right] $$ and an improved Poincaré inequality with sharp constant $\frac{1}{\lambda_2(\nu)}$, it holds for some finite positive constant $C>0$ that
$$W_1\left(L\left((N+1)\beta_N^2 -1\right),\nu^*\right) $$ 
$$\leq C \left[\sqrt{\left|2(N+1)-\lambda_2(\nu)\right|} +\frac{\left|2(N+1)-\lambda_2(\nu)\right|}{\sqrt{\lambda_1(\nu)}} + C_\nu\, d\left(L\left((N+1)\beta_N^2 -1\right),Sp_1(\nu)^\perp\right) \right]$$
where $\beta_N$ is the $\beta\left(\frac{N}{2},\frac{N}{2}\right)$-distribution on $[-1,1]$, $\nu^*$ is the pushforward of $\nu$ by $f_2=L\left((N+1)x^2-1\right)$, the constant $L$ is given by $L=\begin{pmatrix}
N\\
2
\end{pmatrix}^{-1} (N+3)$, the constant $C_\nu$ is given by $C_\nu=\sqrt{\lambda_2(\nu)-\lambda_1(\nu)} +\frac{\lambda_2(\nu)-\lambda_1(\nu)}{\sqrt{\lambda_1(\nu)}}$, and $d\left(L\left((N+1)\beta_N^2 -1\right),Sp_1(\nu)^\perp\right) $ quantifies the orthogonality error between $f_2$ and the first eigenspace of $\nu$.
\end{thm}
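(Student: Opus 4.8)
The plan is to treat this statement as the $k=2$ Jacobi counterpart of the second Gaussian theorem, exploiting the global factorization $\Gamma(f_2)=h\circ f_2$ obtained in the computation just before the statement (with $f_2:=G_{N,2}$, $\lambda_2(\mu)=2(N+1)$, and $h$ the displayed quadratic polynomial). Because of this factorization there is no need to pass through the splitting of Section~\ref{splitpushforward}: exactly as for a first eigenfunction, and as in the Gaussian case $k=2$, the pushforward by $f_2$ can be taken directly on all of $[-1,1]$, so the family $(h_j)_j$ collapses to a single $h$ on the interval $I:=f_2([-1,1])$. Concretely I would (i) rewrite the normalization \eqref{normalisation} as the two moment conditions in the statement, (ii) check that this $h$ meets the hypotheses of Proposition~\ref{resumechfini}, and (iii) rerun the proof of Theorem~\ref{stabund} with a single connected component.

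For (i): writing $f_2(x)=c\,\bigl((N+1)x^2-1\bigr)$ for the relevant constant $c$, the condition $\int f_2\,d\nu=0$ is equivalent to $\int x^2\,d\nu=\frac1{N+1}$; expanding $f_2^2$ as a polynomial of degree $4$ and inserting this value of $\int x^2\,d\nu$, the condition $\int f_2^2\,d\nu=1$ becomes the prescribed value of $\int x^4\,d\nu$. The third condition of \eqref{normalisation} is then automatic: $\Gamma(f_2)(x)=h(f_2(x))=(1-x^2)(f_2'(x))^2$ is a polynomial of degree $4$ in $x$, so $\int\Gamma(f_2)\,d\nu$ is a fixed affine function of $\int x^2\,d\nu$ and $\int x^4\,d\nu$ and is thereby forced to equal $\int\Gamma(f_2)\,d\mu=\lambda_2(\mu)\int f_2^2\,d\mu=\lambda_2(\mu)$, so $\int\Gamma(f_2)\,d\nu\le\lambda_2(\mu)$ holds (with equality). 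Hence the two moment conditions are exactly \eqref{normalisation} for $k=2$, while \eqref{poicareimproved} with sharp constant $1/\lambda_2(\nu)$ is just the definition of $\lambda_2(\nu)$.

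Step (ii) is the only place where the specific structure of the model enters, and is where I expect the (mild) work to be. Since $[-1,1]$ is compact, $I=[a,b]$ is a bounded interval; $a$ and $b$ are the images of the critical point $0$ and of $\pm1$, and from the explicit form of $h$ one checks that they are exactly the two roots of $h$ and that these roots are simple — equivalently, $f_2$ has a nondegenerate critical point at $0$ and attains its extrema at $0$ and $\pm1$. Thus $h>0$ on the interior of $I$ and $h$ vanishes at a linear rate at both ends, so near each endpoint the finite-boundary bounds $c_1(t-a)^2\le h(t)\le c_2(t-a)$ of Proposition~\ref{resumechfini} hold (the lower bound being immediate from a simple zero, the upper bound from Taylor's formula). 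Therefore Assumption~\ref{hjcommeilfaut} is satisfied and the Stein constant $C_h$ of Proposition~\ref{gammasteinbound} is finite.

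For (iii), I would run the proof of Theorem~\ref{stabund} verbatim with the single triple $(h,f_2^{\#}\nu,f_2^{\#}\mu)$ replacing $(h_j,\nu_j,\mu_j^*)_j$: the sum over $j$ in \eqref{ipppushforward}--\eqref{steinpret} collapses to one term with weight $\nu([-1,1])=1$, and solving the Stein equation $h\psi'-\lambda_2(\mu)\,t\,\psi=g-\mu^*(g)$ via \eqref{steinsol}, then bounding $\int\Gamma(\psi\circ f_2)\,d\nu\le C_h^2$ by Proposition~\ref{gammasteinbound}, yields $W_1(f_2^{\#}\mu,f_2^{\#}\nu)\le C_h^2$ times the bracket of \eqref{steinpret} with $k=2$, in which $C_1=C_\nu$ by \eqref{cnu}. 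It then remains only to identify $f_2^{\#}\mu$ as the pushforward of $\mu=\beta(\tfrac N2,\tfrac N2)$ under the quadratic $f_2$, i.e. the law of $L\bigl((N+1)\beta_N^2-1\bigr)$, and to substitute $\lambda_2(\mu)=2(N+1)$, giving the stated bound with $C=C_h^2<\infty$ (and $d(f_2,Sp_1(\nu)^\perp)$ being exactly the orthogonality error written in the statement). No difficulty beyond Theorem~\ref{stabund} and the Gaussian case is expected; the whole point is that the symmetry of $\beta(\tfrac N2,\tfrac N2)$ gives the global factorization, after which only the change-of-variables bookkeeping of (i) and the endpoint analysis of (ii) require attention.
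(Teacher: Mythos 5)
Your proposal is correct and follows essentially the same route as the paper: it exploits the global factorization $\Gamma(G_{N,2})=h\circ G_{N,2}$ coming from the symmetry of the Beta distribution, reduces the normalization \eqref{normalisation} to the two moment conditions, verifies the linear vanishing of $h$ at the endpoints so that Proposition \ref{resumechfini} (hence Assumption \ref{hjcommeilfaut}) applies, and then runs the Stein argument of Theorem \ref{stabund} with a single component, exactly as in the Gaussian $k=2$ case. Your extra observation that the third normalization condition is automatic because $\Gamma(f_2)$ is a degree-four polynomial in $x$ is a correct (and slightly more explicit) justification of the reduction the paper merely asserts.
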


\subsection{The $k$-th eigenvalue, $k\geq 3$}

As soon as $k\geq 3$, the global factorization does not hold anymore. So we apply the general method presented in Section $\ref{splitpushforward}$. We have
\begin{equation}\label{derivegegenbauer}
G_{N,k}'(x) = \begin{pmatrix}
k+N-1\\
k-1
\end{pmatrix}
\begin{pmatrix}
k+N-2\\
k
\end{pmatrix}^{-1}
(N+1) \, G_{N+2,k-1}(x).
\end{equation}

So the critical points $\mathcal{C}_k=\{x\in\mathbb{R}_+\,|\,\Gamma(G_{N,k})(x)=0\}$ of $G_{N,k}$ are the zeros of $G_{N+2,k-1}$ and $-1$ and $1$. This means that there are $k$ connected components $J_j$ of $[-1,1]\setminus\mathcal{C}_k$ which are the nodal sets of $G_{N+2,k-1}$. The eigenfunctions $G_{N,k}$ are injective on each of this connected component $J_j$, so $\Gamma(G_{N,k})$ factorizes as $\Gamma(G_{N,k})=h_j\circ G_{N,k}$. Since $\Gamma(G_{N,k})=C(1-x^2)(G_{N+2,k-1}(x))^2$ where $$C=\begin{pmatrix}
k+N-1\\
k-1
\end{pmatrix}^2
\begin{pmatrix}
k+N-2\\
k
\end{pmatrix}^{-2}
(N+1)^2 ,$$ we get
\begin{equation}\label{hjjacobi}
h_j(t) = C\left(1 - (G_{N,k\,|J_j}^{-1}(t))^2 \right) \left(G_{N+2,k-1}\circ G_{N,k\,|J_j}^{-1}(t)\right)^2.
\end{equation}
In order to Theorem $\ref{stabund}$ to apply, we have to verify that these functions $h_j$ satisfy Assumption $\ref{hjcommeilfaut}$. Gegenbauer polynomials do not vanish at $-1$ and $1$, so the rate at which $h_j$ vanishes at boundaries $G_{N,k}(-1)$ and $G_{N,k}(1)$ is linear. We have then to treat the case of critical points in the interior of $[-1,1]$. The reasonning is the same as for Laguerre polynomials: since Gegenbauer polynomials have only roots of multiplicity one, by a Taylor expansion we see that the functions $h_j$ vanishes at boundaries of their domains of definition with linear rate.

Moreover, in this case, the normalization conditions $\eqref{normalisation}$ take the form
$$\int G_{N,k}\,d\mu=0,\quad \int G_{N,k}^2d\mu=1,\quad$$
$$ \mathrm{and}\quad \int  (1-x^2)G_{N+2,k-1}^2d\mu = \begin{pmatrix}
k+N-1\\
k-1
\end{pmatrix}^{-1}
\begin{pmatrix}
k+N-2\\
k
\end{pmatrix}
\frac{k(k+N-1)}{N+1}.  $$

Finaly, Theorem $\ref{stabund}$ can be applied, and one gets the following.
\begin{thm}\label{stabbetahigh}
Let $N>1$, $\mu$ be the $\beta\left(\frac{N}{2},\frac{N}{2}\right)$ distribution on $[-1,1]$, and $G_{N,k}$ be the Gegenbauer polynomials defined in $\eqref{defgegenbauer}$.
Then for all probability measures $\nu$ on $[-1,1]$ normalized such that $$\int G_{N,k}\,d\mu=0,\quad \int G_{N,k}^2d\mu=1,\quad$$
$$ \mathrm{and}\quad \int  (1-x^2)G_{N+2,k-1}^2d\mu = \begin{pmatrix}
k+N-1\\
k-1
\end{pmatrix}^{-1}
\begin{pmatrix}
k+N-2\\
k
\end{pmatrix}
\frac{k(k+N-1)}{N+1},$$ and satisfying the improved Poincaré inequalities $\eqref{poicareimproved}$, it holds for some finite constant $C_\beta>0$:
$$ \sum_j \nu_j(I_k^j)\, W_1(\nu_j^*,\mu_j^*) \leq C_\beta \left[\sqrt{\left|k(k+N-1)-\lambda_k(\nu)\right|} +\frac{ \left|k(k+N-1)-\lambda_k(\nu)\right|}{\sqrt{\lambda_1(\nu)}} + \sum_{i=1}^{k-1}C_i d(L_{k,s},Sp_i(\nu)^\perp)\right]  $$
where $(I_k^j)_j$ are the images by $G_{N,k}$ of the connected components of the complementary of its critical points, $\nu_j^*$ (resp. $\mu_j^*$) is the pushforward of $\nu$ (resp. $\mu$) restricted to $I_k^j$, constants $C_i$ are given by $$C_i=\sqrt{\lambda_k(\nu)-\lambda_i(\nu)}+\frac{\lambda_k(\nu)-\lambda_i(\nu)}{\sqrt{\lambda_i(\nu)}},$$ and $d(G_{N,k},Sp_i(\nu)^\perp)$ is defined in Remark $\ref{orthoerror}$ and quantifies the orthogonality error between $G_{N,k}$ and eigenspaces of lower orders of $\nu$.
\end{thm}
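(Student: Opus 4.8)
The plan is to recognize this statement as a direct application of Theorem \ref{stabund} to the Jacobi operator with reference measure $\mu = \beta(\tfrac{N}{2},\tfrac{N}{2})$ and eigenfunction $f_k = G_{N,k}$. Consequently the only things that genuinely need to be done are: to identify the critical set $\mathcal{C}_k$ and the local factorizations $\Gamma(G_{N,k}) = h_j\circ G_{N,k}$; to check that the resulting $h_j$ satisfy the technical Assumption \ref{hjcommeilfaut}; and to rewrite the abstract normalization $(\ref{normalisation})$ in the explicit form of the statement.

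First, using the derivative identity $(\ref{derivegegenbauer})$, $G_{N,k}'$ is a nonzero constant multiple of $G_{N+2,k-1}$, so $\Gamma(G_{N,k})(x) = (1-x^2)\,(G_{N,k}'(x))^2$ vanishes exactly on $\{-1,1\}$ together with the $k-1$ zeros of $G_{N+2,k-1}$ in $(-1,1)$. Hence $[-1,1]\setminus\mathcal{C}_k$ has exactly $k$ connected components $J_j$ — the nodal intervals of $G_{N+2,k-1}$ — on each of which $G_{N,k}'$ has constant sign, so that $G_{N,k}$ is strictly monotone there. This yields the factorization $\Gamma(G_{N,k}) = h_j\circ G_{N,k}$ on $J_j$ with $h_j$ given by $(\ref{hjjacobi})$, defined on the bounded interval $I_k^j := G_{N,k}(J_j)$.

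The core step is to verify Assumption \ref{hjcommeilfaut}, i.e. that each $h_j$ meets the requirements of Proposition \ref{resumechfini}. Since $M = [-1,1]$ is bounded and $G_{N,k}$ is a polynomial, every endpoint of $I_k^j$ is finite, so only the finite-boundary alternative of Proposition \ref{resumechfini} is relevant, and it suffices to show that $h_j$ vanishes at a linear rate at each endpoint $a_j$ of $I_k^j$. Let $x_0 = (G_{N,k}|_{J_j})^{-1}(a_j)$. If $x_0 \in (-1,1)$, then $x_0$ is a zero of $G_{N+2,k-1}$, necessarily simple (the roots of the orthogonal polynomial $G_{N+2,k-1}$ are simple), so $G_{N+2,k-1}(x) = \gamma(x-x_0) + o(x-x_0)$ with $\gamma\neq 0$; integrating $G_{N,k}' = \mathrm{const}\cdot G_{N+2,k-1}$ gives $t - a_j = G_{N,k}(x)-G_{N,k}(x_0)$ of order $(x-x_0)^2$, and since $1-x_0^2 > 0$, substituting into $(\ref{hjjacobi})$ shows $h_j(t)$ is of order $(x-x_0)^2$, hence of order $t-a_j$. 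If instead $x_0\in\{-1,1\}$, then $G_{N+2,k-1}(x_0)\neq 0$ (the roots of Gegenbauer polynomials lie in the open interval), so $G_{N,k}'(x_0)\neq 0$, $G_{N,k}$ is a local diffeomorphism at $x_0$, $|x-x_0|$ is of order $|t-a_j|$, and since $1-x^2$ vanishes linearly at $x_0$ while $G_{N+2,k-1}(x)^2$ stays bounded away from zero, $(\ref{hjjacobi})$ again gives $h_j(t)$ of order $t-a_j$. In both cases $h_j$ behaves near $a_j$ like $c\,(t-a_j)$ with $c>0$, which on a one-sided neighbourhood is squeezed between $c_1(t-a_j)^2$ and $c_2(t-a_j)$; thus the hypotheses of Proposition \ref{resumechfini} hold at every endpoint, Assumption \ref{hjcommeilfaut} is satisfied, and all constants $C_{h_j}$ of Proposition \ref{gammasteinbound} are finite.

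It remains to translate the normalization. The conditions $\int G_{N,k}\,d\nu = 0$ and $\int G_{N,k}^2\,d\nu = 1$ are the first two conditions of $(\ref{normalisation})$; for the third, the identity $\Gamma(G_{N,k})(x) = C(1-x^2)G_{N+2,k-1}(x)^2$ together with $\lambda_k(\mu) = k(k+N-1)$ turns $\int\Gamma(G_{N,k})\,d\nu \le \lambda_k(\mu)$ into the stated integral condition on $(1-x^2)G_{N+2,k-1}^2$, the right-hand constant being read off from $C$ and the normalization of $G_{N+2,k-1}$ (equality holding when $\nu=\mu$). The improved Poincaré inequalities $(\ref{poicareimproved})$ hold automatically from the definition of the $\lambda_i(\nu)$. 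All hypotheses of Theorem \ref{stabund} are therefore in force, and its conclusion — with $C_\beta := \sum_j C_{h_j}^2 < \infty$ and $d(G_{N,k},Sp_i(\nu)^\perp)$ as in Remark \ref{orthoerror} — is precisely the claimed inequality. I expect the main obstacle to be exactly the boundary analysis above: matching the local vanishing order of $h_j$, expressed in the variable $t = G_{N,k}(x)$ whose relation to $x$ degenerates precisely at the critical points, with the two-sided bounds demanded by Proposition \ref{resumechfini}, which is where simplicity of the Gegenbauer roots and their non-vanishing at $\pm 1$ are genuinely used.
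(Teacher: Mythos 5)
Your proposal is correct and follows essentially the same route as the paper: it reduces the statement to an application of Theorem \ref{stabund}, identifies the critical set via $(\ref{derivegegenbauer})$ and the local factorizations $(\ref{hjjacobi})$, verifies Assumption \ref{hjcommeilfaut} by the simplicity of the Gegenbauer roots (interior endpoints) and their non-vanishing at $\pm 1$ (boundary endpoints), and translates the normalization $(\ref{normalisation})$ via $\Gamma(G_{N,k})=C(1-x^2)G_{N+2,k-1}^2$. Your boundary analysis of the vanishing rate of $h_j$ in the variable $t=G_{N,k}(x)$ is in fact more explicit than the paper's, which only sketches it by analogy with the Laguerre case.
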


\textbf{Acknowledgments:} {This work was supported by the Labex Cimi of University of Toulouse. I would like to thank both my PhD advisors for this present work, Max Fathi for all the discussions on this wide topic, and Franck Barthe for all his fruitful advice.}

\bibliographystyle{plain}
\bibliography{mabibliographie}

\end{document}